\DeclareMathAlphabet{\pazocal}{OMS}{zplm}{m}{n}
\tikzset{edge from parent/.style={draw,very thick}}
\newtheorem{theorem}{Theorem}
\newtheorem{lemma}{Lemma}
\newtheorem{proposition}{Proposition}
\newtheorem{definition}{Definition}
\newtheorem{example}{Example}
\newtheorem{problem}{Problem}
\newtheorem{assumption}{Assumption}
\renewcommand{\S}{{\mathbb{S}}}
\newcommand{\R}{{\mathbb{R}}}
\newcommand{\B}{{\mathbb B}}
\newcommand{\N}{{\mathbb{N}}}
\newcommand{\argmax}{\textrm{arg}\max}
\newcommand{\s}{\mathrm{span}}
\newcommand{\M}{\mathcal{M}}
\newcommand{\E}{\mathbb{E}}
\newcommand{\x}{{\mathbf{x}}}
\newcommand{\e}{{\mathbf{e}}}
\renewcommand{\O}{{{\mathcal{O}}}}
\newcommand{\J}{{{ \mathbf{J} }} }
\renewcommand{\M}{{{ \mathbf{M} }} }
\newcommand{\z}{{{\mathbf{z}}} }
\newcommand{\res}{{{ \mathbf{r} }} }
\renewcommand{\s}{{{ \mathbf{s}}} }
\renewcommand{\matrix}[1]{\begin{bmatrix}#1\end{bmatrix}}
\newcommand{\supp}{\textrm{supp}}
\begin{document}
\title{Secure State Estimation against Sensor Attacks in the Presence of Noise}
%
%
%

\author{\IEEEauthorblockN{Shaunak Mishra,
Yasser Shoukry,
Nikhil Karamchandani,
Suhas Diggavi and
Paulo Tabuada}
\thanks{
S. Mishra,
Y. Shoukry,
S. Diggavi and
P. Tabuada are with the
Electrical Engineering Department, University of California, Los Angeles,
CA 90095-1594, USA
(e-mail: \{shaunakmishra, yshoukry, suhasdiggavi, tabuada\}@ucla.edu).
N. Karamchandani
is with the
Electrical Engineering Department, Indian Institute of Technology Bombay,
Mumbai-400076, India (email: nikhilk@ee.iitb.ac.in).
The work was supported by NSF grant 1136174 and
DARPA under agreement number FA8750-12-2-0247.
A preliminary version of this work
appeared in the proceedings of ISIT 2015 \cite{ISIT15_shaunak}.
}
}

%
%

\markboth{}%
{}
%


\maketitle
\begin{abstract}
We consider the problem of estimating the state of a noisy linear dynamical system
when an unknown subset of sensors is arbitrarily corrupted by an adversary.
We propose a secure state estimation algorithm,
and derive (optimal) bounds on the achievable state estimation error given an upper bound on the number of attacked sensors. 
The proposed state estimator involves
Kalman filters operating over subsets of sensors to search for
a sensor subset which is reliable for state estimation.
To further improve the subset search time,
we propose Satisfiability Modulo Theory based techniques
to exploit the
combinatorial nature of searching over sensor subsets.
Finally, as a result of independent interest, we give a coding theoretic view of
attack detection and state estimation against sensor
attacks in a noiseless dynamical system.
 \end{abstract}
%
\IEEEpeerreviewmaketitle

\section{Introduction} \label{sec:introduction}
Securing cyber-physical systems (CPS) is a problem of growing importance as the vast majority of today's critical infrastructure is managed by such systems.
In this context, it is crucial
to understand the fundamental limits
for state estimation, an integral aspect of CPS, in the presence of malicious attacks.
With this motivation,
we focus on securely estimating the state of a linear dynamical system
from a set of noisy and maliciously corrupted sensor measurements.
We restrict the sensor attacks to be sparse in nature, \emph{i.e.}, an adversary can arbitrarily corrupt
an unknown subset of sensors in the system but is restricted by an upper bound
on the number of attacked sensors.

\par
Several recent works have studied
the problem of secure state estimation against sensor attacks in linear dynamical systems.
For setups with no noise in sensor measurements,
the results reported in \cite{Bullo_CSM, Hamza_TAC,YasserETPGarXiv} show that,
given a strong notion of observability,
(sparse) sensor attacks
can always be detected and isolated,
and we can exactly estimate the state of the system.
However,
with noisy sensors,
it is not trivial to
distinguish between the noise and the attacks injected by an adversary.
Prior work on state estimation with
sensor attacks in the presence of noise
can be broadly divided into two categories depending on the noise model:
1) bounded non-stochastic noise, and 2) Gaussian noise.
Results reported in \cite{Yasser_SMT,Joao_ACC,Pajic_ICCPS}
deal with bounded non-stochastic noise.
Though they provide sufficient conditions for distinguishing the sparse attack vector from bounded noise,
they
do not guarantee the optimality of their estimation algorithm.
The problem we focus on in this paper falls in the second category,
\emph{i.e.},
sensor attacks in the presence of Gaussian noise.
Prior work in this category
includes \cite{YilinAllerton,Bai_Gupta,BoydKF, Georgios_TSP2}.
In \cite{YilinAllerton}, the focus is on detecting a class of sensor attacks called \textit{replay} attacks
where the attacker replaces
legitimate sensor outputs
with outputs from previous time instants.
In \cite{Bai_Gupta},
the performance degradation of a scalar Kalman filter (\emph{i.e.}, scalar state and a single sensor) is studied when the (single) sensor is under attack.
They do not study attack sparsity across multiple sensors,
and in addition,
they focus on an adversary whose objective is to degrade the estimation performance without being detected (leading to a restricted class of sensor attacks).
In \cite{BoydKF} and \cite{Georgios_TSP2},
robustification approaches for state estimation against sparse sensor attacks are studied.
However, they lack optimality guarantees against arbitrary sensor attacks.
\par In this paper,
we study a general linear dynamical system
with process and sensor noises having
a Gaussian distribution,
and
give (optimal) guarantees on the achievable state estimation error against arbitrary sensor attacks.
The following toy example is illustrative of the nature of the problem addressed in this paper and some of the ideas behind our solution.

\begin{example}
Consider a linear dynamical system with a scalar state $x(t)$ such that $x(t+1) = x(t) + w(t)$, 
and three sensors (indexed by $d \in \{1,2,3\}$) with outputs $y_d(t) = x(t) + v_d(t)$;
where $w(t)$ and $v_d(t)$ are the process noise and sensor noise at
sensor $d$ respectively. The process and sensor noises follow
a zero mean Gaussian distribution with i.i.d. instantiations over time.
The sensor noise is also independent across sensors.
Now, consider an adversary which can attack any one of the sensors in the system and arbitrarily change its output.
In the absence of sensor noise, it is trivial to detect such an 
attack since the two good sensors (not attacked by the adversary) will have the same output.
Hence, a majority based rule on the outputs leads to the exact state.
However, in the presence of sensor noise,
a difference in outputs across sensors can also be attributed to the noise,
and thus cannot be considered an attack indicator.
As a consequence of results in this paper,
in this example we can identify a subset of two sensors which can be reliably
used for state estimation despite an adversary who can attack any one of the three noisy sensors.
In particular, our approach for this example would be to
search for a subset of two sensors which satisfy the following check: over a large enough time window,
the outputs from the two sensors are \textit{consistent} with the Kalman state estimate based on outputs from the same subset of sensors.
Furthermore, we can show that such an approach leads to the optimal state estimation error for the given adversarial setup.
\end{example}

\par
In this paper, we generalize the Kalman filter based approach in the above example to a general linear dynamical system with sensor and process noise.
The Kalman estimate based check mentioned in the above example forms the basis of a detector for an \textit{effective} attack; a notion that we introduce in this paper.
For state estimation,
we search for a sensor subset which passes such an
effective attack detector,
and then use outputs from such a sensor subset for state estimation.
We also derive impossibility results (lower bounds)
on the state estimation error
in our adversarial setup, and show that our
proposed state estimation algorithm is optimal
in the sense that it achieves these lower bounds.
To further reduce the sensor subset search time for the state estimator,
we propose Satisfiability Modulo Theory (SMT)
based techniques to harness the combinatorial nature of the search problem,
and demonstrate the improvements in search time through numerical experiments.

As a result of independent interest,
we give a coding theoretic interpretation (alternate proof) for the necessary and sufficient conditions for secure state estimation in the absence of noise \cite{ Hamza_TAC, YasserETPGarXiv, Joao_ACC} (known as the sparse observability condition). In particular,
we relate the sparse observability condition required for attack detection and
secure state estimation in dynamical systems
to the Hamming distance requirements for error detection and correction \cite{blahut} in
classical coding theory.

The remainder
of this paper\footnote{Compared to the preliminary version \cite{ISIT15_shaunak},
this paper differs in the presentation
of results through effective attack detection.
In addition, we reduce the complexity of the state estimation algorithm in
\cite{ISIT15_shaunak} and also describe SMT based techniques for reducing the subset
search time.}
is organized as follows.
Section~\ref{sec:setup} deals with the setup and problem formulation.
In Section~\ref{sec:detector}, we describe our effective attack detector followed by Section~\ref{sec:main_results} on our main results for effective
attack detection and secure state estimation.
Section~\ref{sec:smt} deals with SMT based techniques and 
Section~\ref{sec:exp} with the experimental results.
Finally, Section~\ref{sec:sparse_proof} describes the coding theoretic view for attack detection and secure state estimation.

\section{Setup} \label{sec:setup}
In this section,
we discuss the adversarial setup along with assumptions on the underlying dynamical system,
and provide a mathematical formulation of the state estimation problem considered in this paper.
\subsection{Notation}
The symbols $\N, \R$ and $\B$ denote the sets of natural, real, and Boolean numbers respectively. The symbol $\land$ denotes the logical AND operator.
The support of a vector $ \mathbf{x} \in \R^n$,  denoted by $\supp(\mathbf{x})$,
is the set of indices of the non-zero elements of $\mathbf{x}$.
If $\s$ is a set, $|\s|$ is the cardinality of $\s$. 
For the matrix $\mathbf{M} \in \R^{m \times n}$, 
unless stated otherwise,
we denote by $\mathbf{M}_i \in \R^{1\times n}$ the $i${th} row of the matrix.
For the set \mbox{$\s \subseteq \{1, \hdots, m\}$}, we denote by
$\mathbf{M}_{\s} \in \R^{\vert \s \vert \times n}$
the matrix obtained from $\mathbf{M}$ by  removing all the rows except those
indexed by $\s$.
We use $tr \left( \mathbf{M} \right)$ to denote the trace of the matrix $\mathbf{M}$.
If the matrix $\mathbf{M}$ is symmetric,
we use $\lambda_{\min} \left( \mathbf{M} \right)$ and
$\lambda_{\max} \left( \mathbf{M} \right)$
to denote the minimum and maximum eigenvalue of $\mathbf{M}$ respectively.
We denote by $\S_{+}^{n}$ the set of all $n\times n$ positive semi-definite matrices.
For a random variable $\x \in \R^n$,
we denote its mean by $\E \left ( \x \right) \in \R$
and its covariance by $Var(\x)  \in \S_{+}^{n} $.
For a discrete time random process $\{ \x (t) \}_{t \in \N }$,
the sample average of $\x$
using $N$ samples starting at time $t_1$ is defined as follows:
\begin{align}
\E_{N,t_1} \left ( \x \right)  &= \frac{1}{N} \sum_{t = t_1}^{t_1+N-1} \x{(t)}. \label{eq:sample_avg_defn}
\end{align}
We denote by $\mathbf{I}_m \in \R^{m\times m}$ and $\mathbf{1}_m \in \R^{m \times 1}$ the identity matrix of dimension $m$ and the vector of all ones respectively.
The notation $\x(t) \sim \mathcal{N} \left(\boldsymbol{\mu}, \boldsymbol{\Omega} \right)$
is used to denote an i.i.d. Gaussian random process with mean $\boldsymbol{\mu}$ and covariance matrix $\boldsymbol{\Omega}$.
Finally, we use the symbol $\preccurlyeq$ for element-wise comparison between matrices. That is, for two matrices $\mathbf{A}$ and $\mathbf{B}$ of the same size,
$\mathbf{A} \preccurlyeq \mathbf{B}$
is true if and only if each element $a_{i,j}$ is smaller than or equal to $b_{i,j}$.

\subsection{System model} 
We consider a linear dynamical system $\pmb{\Sigma}_a $ with sensor attacks as shown below:
\begin{align}
\pmb{\Sigma}_a 
\begin{cases}
\x \left(t+1\right) &= \mathbf{A} \x(t) + \mathbf{B} \mathbf{u}(t) + \mathbf{w}(t), \\ 
\mathbf{y}(t) &= \mathbf{C} \x(t) + \mathbf{v}(t) + \mathbf{a}(t) ,
\end{cases}
\label{eq:vector_system_model}
\end{align} 
where $\mathbf{x}(t)\in \mathbb{R}^n$ denotes the state of the plant at time $t \in \mathbb{N}$,
$\mathbf{u}(t) \in \mathbb{R}^m$ denotes the input at time $t$,
$\mathbf{w}(t) \sim \mathcal{N} \left ( \mathbf{0}, \sigma^2_w \mathbf{I}_n\right)$ denotes the process noise at time $t$,
$\mathbf{y}(t) \in \mathbb{R}^p$ denotes the output of the plant at time $t$ and $ \mathbf{v}(t) \sim \mathcal{N} \left ( \mathbf{0}, \sigma^2_v \mathbf{I}_p\right)$ denotes the sensor noise at time $t$. Both $\mathbf{v}(t)$ and $\mathbf{w}(t)$
have i.i.d. instantiations
over time, and $\mathbf{v}(t)$ is independent of $\mathbf{w}(t)$.
In addition,
we denote the output and (sensor) noise at sensor $i \in \{1,2,\ldots, p\}$
at time $t$
as
$y_i(t) \in \R$ and $v_i(t) \in \R$ respectively.
We assume that the input $\mathbf{u}(t)$ is known at all time.
Hence, its contribution to the output $\mathbf{y}(t)$ is also known, and therefore, $\mathbf{u}(t)$ can be ignored.
That is, for the rest of the paper, and without loss of generality,
we consider the case of $\mathbf{u}(t) = 0$ for all time $t \in \N$.

\par The sensor attack vector $\mathbf{a}(t) \in \R^p$ in \eqref{eq:vector_system_model} is introduced by a $k$-adversary defined as follows.
\begin{assumption}
A $k$-adversary can corrupt any $k$ out of the $p$ sensors in the system.
\end{assumption}
Specifically, let $\pmb{\kappa} \subseteq \{1,2,\ldots, p\}$ denote the set of attacked sensors
(with $|\pmb{\kappa}| = k$). The $k$-adversary can observe the actual outputs in the $k$ attacked sensors and change them arbitrarily. For an attack free sensor $j \notin \pmb{\kappa}$,
$\mathbf{a}_j(t) =0$ for all time $t \in \N$.
\begin{assumption}
The adversary's choice of $\pmb{\kappa}$ is unknown but is assumed to be constant over time (static adversary).
\end{assumption}
\begin{assumption}
The adversary is assumed to have unbounded computational power, and knows the system parameters (\emph{e.g.,} $\mathbf{A}$ and $\mathbf{C}$) and noise statistics (\emph{e.g.,} $\sigma^2_w$ and $\sigma^2_v$). 
\end{assumption}
However, the adversary is limited to have only causal knowledge of the process and
sensor noise as stated by the following two assumptions.
\begin{assumption}
The adversary's knowledge at time $t$ is statistically independent of $\mathbf{w}(t')$ for $t' > t$, i.e., $\mathbf{a}(t)$ is statistically independent of $\{\mathbf{w}(t')\}_{t'>t}$.
\label{ass:A1}
\end{assumption}
\begin{assumption}
For an attack-free sensor \mbox{$i \in \{1,2,\ldots, p\} \setminus \pmb{\kappa}$},
the adversary's knowledge at time $t$ (and hence $\mathbf{a}(t)$) is statistically independent of $\{ v_i(t') \}_{t' > t}$.
\label{ass:A2}
\end{assumption}
Intuitively, Assumptions~\ref{ass:A1} and \ref{ass:A2} limit the adversary to have only causal knowledge of the process noise and the sensor noise in \textit{good} sensors (not attacked by the adversary).
Note that, apart from Assumptions~\ref{ass:A1} and \ref{ass:A2}, we do not impose any restrictions on the statistical properties, boundedness and the time evolution of the corruptions introduced by the $k$-adversary.

In the following subsections,
we first introduce the (effective) attack detection problem, followed by
the (optimal) secure state estimation problem.
As we show later in the paper (in Section~\ref{sec:main_results}),
our solution for the effective attack detection problem
is used as a crucial component for solving the
secure state estimation problem.

\subsection{Effective Attack Detection Problem}
In this section,
we introduce our notion of effective (sensor) attacks
and formulate the problem of detecting them.
Recall that in the absence of sensor attacks,
using a Kalman filter for estimating the state in \eqref{eq:vector_system_model}
leads to the (optimal) minimum mean square error (MMSE) covariance asymptotically \cite{kailath_book}.
In this context,
our notion of effective attacks is based on the following intuition:
if we naively use a Kalman filter for state estimation in the presence of an adversary,
an attack is effective when it causes a higher empirical error variance compared to the attack-free case.
Before we formally state our definition of effective attacks,
we first setup some notation for Kalman filters as described below.

We denote by $\hat{\x}_{\mathbf{s}}(t)$
the state estimate of a Kalman filter at time $t$
using outputs till time $t-1$
from the sensor subset $\mathbf{s} \subseteq \{1,2,\ldots, p\}$.
Since we use outputs till time $t-1$,
we essentially use the \textit{prediction} version of a Kalman
filter as opposed to \textit{filtering} where outputs till time $t$ are used to compute $\hat{\x}_{\s}(t)$.
In this paper,
we state our results using the prediction version of the Kalman filter;
the extension for the filtering version is straightforward
(for details about the filtering version of our results, see Appendix~\ref{sec:filtering}).
In addition to $\hat{\x}_{\mathbf{s}}(t)$,
we denote by $\hat{\x}^{\star}_{\mathbf{s}}(t)$
the Kalman filter state estimate at time $t$ using sensor subset $\mathbf{s}$
when all the sensors in $\mathbf{s}$ are attack-free.
We eliminate the subscript $\mathbf{s}$ from the previous notation whenever the Kalman filter uses all sensor measurements, \emph{i.e.}, when $\mathbf{s} = \{1,\ldots,p\}$. 
In this paper, for the sake of simplicity,
we assume that all the Kalman filters we consider (in our proposed algorithms and their analysis)
are in steady state \cite{kailath_book}
when they use uncorrupted sensor outputs.
Hence, in the absence of attacks,
the error covariance matrix $\mathbf{P}^{\star} (t) \in \S_n^{+}$
defined as:
 $$\mathbf{P}^{\star} (t) = \mathbf{P}^{\star} = \E{ \left (  \left (
\x(t) - \hat{\x}^{\star}(t) \right ) \left ( \x(t) - \hat{\x}^{\star}(t) \right)^T \right)},
$$ does not depend on time.
In a similar spirit,
we define the error covariance matrix $\mathbf{P}_{\mathbf{s}}^{\star} \in \S^{+}_n$
corresponding to sensor subset $\mathbf{s} \subseteq \{1,2,\ldots, p\}$ as:
$$\mathbf{P}_{\mathbf{s}}^{\star} = \E{(\x(t) - \hat{\x}_\mathbf{s}^{\star}(t))(\x(t) - \hat{\x}_\mathbf{s}^{\star}(t))^T}.$$
Note that the
error covariance matrix depends on
the set of sensors involved in estimating the state.
Also, the steady state error has zero mean, \emph{i.e.},
$\E \left ( \x(t) - \hat{\x}_\mathbf{s}^{\star}(t) \right)  = 0 $.
Using the above notation, we define an $(\epsilon, \mathbf{s})$-effective attack  as follows.

\begin{definition}[\textbf{$(\epsilon,\mathbf{s})$-Effective Attack}]
Consider the linear dynamical system under attack
$\pmb{\Sigma_a}$
as defined in~\eqref{eq:vector_system_model},
and a $k$-adversary satisfying Assumptions 1-5.
For the set of sensors $\mathbf{s}$, an $\epsilon > 0$,
and a large enough $N \in \mathbb{N}$,
an attack signal is called $(\epsilon, \mathbf{s})$-effective
at
time $t_1$
if the following bound holds:
$$tr \left(\E_{N,t_1}{\left(\e_{\mathbf{s}} \e_{\mathbf{s}}^T\right)} \right) > tr( \mathbf{P}^{\star}_{\mathbf{s}})  + \epsilon,$$
where $\e_{\mathbf{s}}(t) = \x(t) - \hat{\x}_{\mathbf{s}}(t)$,
and $\E_{N,t_1}(\cdot)$ denotes the sample average as defined
\eqref{eq:sample_avg_defn}.
\label{def:effective_attack}
\end{definition}
In other words,
an attack is called $(\epsilon, \mathbf{s})$-effective
if it can lead to a higher estimation error compared to
the optimal estimation error in the absence of sensor attacks,
using the same set of sensors $\mathbf{s}$.
An attack is called $(\epsilon, \mathbf{s})$-ineffective if it is not $(\epsilon, \mathbf{s})$-effective.
Essentially, we use
$\E_{N,t_1}{\left(\e_{\mathbf{s}} \e_{\mathbf{s}}^T\right)}$ as
a \textit{proxy} for the state estimation error covariance matrix in the presence of attacks;
a sample average is used instead of an expectation because the resultant error
in the presence of attacks may not be ergodic.
Also, since $\hat{\x}_{\mathbf{s}}(t)$ is computed using
all measurements from time $0$ till time $t -1$,
Definition~\ref{def:effective_attack} implicitly takes into consideration the effect of attack signal $\mathbf{a}(t)$ for the
time window starting from $0$ till time $t + N - 1$.

Using the above notion of an $(\epsilon,\mathbf{s})$-effective attack, we define the $\epsilon$-effective attack detection problem as follows.
\begin{problem}{[\textbf{$\epsilon$-Effective Attack Detection Problem}]}
Consider the linear dynamical system under attack $\pmb{\Sigma_a}$
as defined in~\eqref{eq:vector_system_model},
and a $k$-adversary satisfying Assumptions 1-5.
Let $\mathbf{s}_{\text{all}}$ be the set of all sensors,
i.e., $\mathbf{s}_{\text{all}} = \{1,\ldots,p\}$.
Given an $\epsilon > 0$,
construct an attack indicator $\hat{d}_{\text{attack}} \in \{0,1\}$
such that:
\begin{align*}
\hat{d}_{\text{attack}}(t_1) = \begin{cases}
1 & \textbf{if } \; \text{the attack is $(\epsilon, \mathbf{s}_{\text{all}})$-effective at time $t_1$}\\
0 & \textbf{otherwise}.
\end{cases}
\end{align*}
\label{prob:effective_attack}
\end{problem}
\subsection{Optimal Secure State Estimation Problem}
We now focus on the problem of estimating
the state from the adversarially corrupted sensors.
We start by showing a negative result
stating that a certain estimation error bound may be impossible to achieve in the
presence of a $k$-adversary.
To do so, we define the sensor set
that contains $p - k$ sensors and corresponds to the worst case Kalman estimate as:
\begin{align}
 \mathbf{s}_{\text{worst},p - k} = \argmax_{ \substack{\mathbf{s} \subseteq \{1,2,\ldots , p\}, \\ |\mathbf{s}| = p-k} } tr (\mathbf{P}_{\mathbf{s}}^{\star} ).
 \end{align}
The impossibility result can now be stated as follows.
\begin{theorem}[\textbf{Impossibility}] \label{thm:impos}
Consider the linear dynamical system under attack
$\pmb{\Sigma_a}$
as defined in~\eqref{eq:vector_system_model},
and an oracle MMSE estimator that has knowledge of $\pmb{\kappa}$,
i.e., the set of sensors attacked by a $k$-adversary.
Then, there exists a choice of sensors $\pmb{\kappa}$ and an attack sequence $\mathbf{a}(t)$ such that the trace of the error covariance of the oracle estimator is bounded from below as follows:
\begin{align}
 tr \bigg ( \E \left ( \mathbf{e}(t) \mathbf{e}^T(t) \right )  \bigg)  \ge  tr \bigg(  \mathbf{P}^\star_{\mathbf{s}_{\text{worst},p - k}}  \bigg),
 \label{eq:lower_impossible_bound}
\end{align}
where $\mathbf{e}(t)$ above is the oracle estimator's error.
\end{theorem}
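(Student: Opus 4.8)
The plan is to exhibit a single adversary --- a choice of $\pmb{\kappa}$ together with an attack signal $\mathbf{a}(t)$ --- against which even the oracle MMSE estimator cannot beat the worst-case Kalman prediction error over $(p-k)$-sensor subsets. I would take $\pmb{\kappa}$ to be the complement of the worst subset, $\pmb{\kappa} = \{1,\ldots,p\}\setminus \mathbf{s}_{\text{worst},p - k}$, so that the set of attack-free sensors is exactly $\mathbf{s}_{\text{worst},p - k}$; this is a legal static choice with $|\pmb{\kappa}| = k$, consistent with Assumptions 1--3. For the attack signal I would have the adversary overwrite each corrupted channel with fresh noise it generates itself: for $i \in \pmb{\kappa}$ set $\mathbf{a}_i(t) = -\mathbf{C}_i\x(t) - v_i(t) + \nu_i(t)$, where $\{\nu_i(t)\}$ is an i.i.d. $\mathcal{N}(0,\sigma_v^2)$ process drawn by the adversary independently of $\x(0)$, of $\{\mathbf{w}(t')\}_{t'}$, and of the good sensors' noise $\{v_j(t')\}_{j\notin\pmb{\kappa}}$. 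Under this attack $y_i(t) = \nu_i(t)$ for every $i\in\pmb{\kappa}$.

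The first thing to verify is that this attack is admissible under Assumptions 4--5. Since $\x(t)$ is a function of $\x(0)$ and $\{\mathbf{w}(t')\}_{t'\le t-1}$, the signal $\mathbf{a}(t)$ depends on the process noise only causally, satisfying Assumption 4; and $\mathbf{a}(t)$ involves the sensor noise only through $v_i(t)$ at the \emph{attacked} sensors $i\in\pmb{\kappa}$, so Assumption 5 (a constraint on the \emph{good} sensors' noise) holds as well. No boundedness or statistical restriction is imposed on $\mathbf{a}(t)$, so the construction is permitted.

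The crux of the argument is then that the corrupted outputs are information-free: the sequence $\{y_i(t)\}_{i\in\pmb{\kappa},\,t}$ equals $\{\nu_i(t)\}$, which is statistically independent of the entire state trajectory $\{\x(t)\}$ and of the attack-free outputs $\{y_j(t)\}_{j\notin\pmb{\kappa}}$. Hence the oracle's MMSE estimate of $\x(t)$ from the measurements available up to time $t-1$ coincides with the MMSE estimate from only $\{y_j(t')\}_{j\in\mathbf{s}_{\text{worst},p - k},\, t'\le t-1}$. Because that sub-model is linear with jointly Gaussian noises, this conditional mean is exactly the prediction Kalman filter driven by $\mathbf{C}_{\mathbf{s}_{\text{worst},p - k}}$, whose (steady-state) error covariance is, by definition, $\mathbf{P}^\star_{\mathbf{s}_{\text{worst},p - k}}$. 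Therefore $\E(\mathbf{e}(t)\mathbf{e}^T(t)) = \mathbf{P}^\star_{\mathbf{s}_{\text{worst},p - k}}$, and in particular $tr(\E(\mathbf{e}(t)\mathbf{e}^T(t))) \ge tr(\mathbf{P}^\star_{\mathbf{s}_{\text{worst},p - k}})$, which is \eqref{eq:lower_impossible_bound}.

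The hard part is not a calculation but the two points of care above: (i) building the attack so that it genuinely respects the causal-knowledge constraints --- which is why it is written in terms of $\x(t)$, the attacked sensors' own noise, and the adversary's private randomness only; and (ii) arguing that knowledge of $\pmb{\kappa}$ does \emph{not} help the oracle exploit the corrupted channels --- which holds precisely because those channels have been reduced to noise independent of everything, so the Bayes-optimal estimator over the full measurement vector collapses to the Kalman filter over $\mathbf{s}_{\text{worst},p - k}$. One should also keep in mind the paper's standing assumption that the Kalman filter over $\mathbf{s}_{\text{worst},p - k}$ is in steady state, so that $\mathbf{P}^\star_{\mathbf{s}_{\text{worst},p - k}}$ is well defined and finite; if $(\mathbf{A},\mathbf{C}_{\mathbf{s}_{\text{worst},p - k}})$ fails to be detectable the bound is vacuously true.
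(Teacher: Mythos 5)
Your proposal is correct and follows essentially the same route as the paper: choose $\pmb{\kappa} = \{1,\ldots,p\}\setminus \mathbf{s}_{\text{worst},p-k}$ and construct an attack that renders the corrupted channels uninformative, so the oracle MMSE estimator collapses to the Kalman filter over $\mathbf{s}_{\text{worst},p-k}$. The only (immaterial) difference is that the paper zeroes the attacked outputs via $\mathbf{a}_j(t) = -\mathbf{C}_j\x(t) - v_j(t)$ whereas you overwrite them with fresh independent noise; your version additionally spells out the admissibility check against Assumptions 4--5 and the reduction of the conditional mean, which the paper leaves implicit.
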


\begin{proof}
Consider the attack scenario where the outputs from all attacked sensors are equal to zero, \emph{i.e.}, the
corruption $\mathbf{a}_j(t) = - \mathbf{C}_j \mathbf{x}(t) - v_j(t), \; \forall j \in \pmb{\kappa}$.
Hence, the information collected from the attacked sensors cannot enhance the estimation performance. Accordingly, the estimation performance from the remaining sensors is the best 
one can expect to achieve. Hence, the result follows by picking $\pmb{\kappa}$ such that $\pmb{\kappa} = \{1,\ldots,p\}\setminus \mathbf{s}_{\text{worst},p - k}$.
\end{proof}
In the context of Theorem~\ref{thm:impos},
we define a state estimate to be optimal if it is guaranteed to achieve the lower bound shown in~\eqref{eq:lower_impossible_bound}.
This can be formalized as follows.

\begin{problem}{[\textbf{Optimal Secure State Estimation Problem}]}
Consider the linear dynamical system under attack $\pmb{\Sigma_a}$
as defined in~\eqref{eq:vector_system_model}, and 
a $k$-adversary satisfying Assumptions 1-5.
For a time window $G = \{ t_1, t_1 + 1 , \ldots , t_1 + N -1\}$,
construct the state estimates $\{\hat{\x}(t)\}_{t \in G }$ such that:
$$ 
  tr \bigg(\E_{N,t_1} \left( \mathbf{e}\mathbf{e}^T \right) \bigg)
 \le 
 tr \bigg(  \mathbf{P}^\star_{\mathbf{s}_{\text{worst},p - k}} \bigg),  $$
\label{prob:optimal_sse}
where $\mathbf{e}(t) = \x (t) - \hat{\x}(t)$ is the state estimation error.
\end{problem}
Similarly to Definition~\ref{def:effective_attack},
we use the sample average $\E_{N,t_1} \left( \mathbf{e}\mathbf{e}^T \right)$
in Problem~\ref{prob:optimal_sse}
(and not expectation)
since the resultant error in the presence of attacks may not be ergodic.

\section{ Sparse observability and $(\epsilon,\mathbf{s})$-effective attack detection}
\label{sec:detector}
In this section, we first describe the notion of $k$-sparse observability~\cite{YasserETPGarXiv}.
This notion plays an important role in determining when Problems~\ref{prob:effective_attack}
and~\ref{prob:optimal_sse} are solvable.
After describing sparse observability,
we describe an algorithm for $(\epsilon,\mathbf{s})$-effective attack detection
which leverages sparse observability for its performance guarantees.
\subsection{$k$-Sparse Observability}
\begin{definition}{\textbf{($k$-Sparse Observable System)}}
The linear dynamical system under attack $\pmb{\Sigma_a}$
as defined in~\eqref{eq:vector_system_model},
is said to be $k$-sparse observable
if for every set $\mathbf{s} \subseteq\{1,\hdots,p\}$ with $|\mathbf{s}| = p - k$, the pair
$(A,C_{{\mathbf{s}}})$ is observable.
\end{definition}
In other words, a system is $k$-sparse observable if it remains 
observable after eliminating any choice of $k$ sensors. 
In the absence of sensor and process noise,
the conditions under which exact (\emph{i.e.}, zero error)
state estimation can be done despite sensor attacks have been studied in \cite{Hamza_TAC,YasserETPGarXiv,Joao_ACC} where it is shown that $2{k}$-sparse observability is necessary and sufficient for exact state estimation against a $k$-adversary.
In Section~\ref{sec:sparse_proof}, we provide a coding theoretic interpretation for this condition in the context of attack detection and secure state estimation in
any noiseless dynamical system.

\subsection{$(\epsilon, \mathbf{s})$-Effective Attack Detector}
In this section, we describe an algorithm
based on the sparse observability condition
for
detecting an $(\epsilon, \mathbf{s})$-effective attack.
We first introduce some additional notation, followed by the description
of the algorithm and its performance guarantee.
\subsubsection{Additional notation}
Let the sensors be indexed by $i \in \{ 1,2, \ldots, p\}$. 
We define the following observability matrices:
\begin{align}
\O_{i} &=\begin{bmatrix} \mathbf{C}^T_{i}  \\ \mathbf{C}^T_{i} \mathbf{A}  \\ \vdots \\ \mathbf{C}^T_{i} \mathbf{A}^{\mu_i -1} \end{bmatrix}, 
\quad \O = \begin{bmatrix} \O_{1} \\ \O_{2}  \\ \vdots \\ \O_{p}  \end{bmatrix},
\label{eq:obs_defn}
\end{align}
where $\O_{i}$
is the observability matrix for sensor $i$ (with observability index $\mu_i$
as shown in \eqref{eq:obs_defn})
and $\O$ is the observability matrix for the entire system (\emph{i.e.}, $p$ sensors) formed by
stacking the observability matrices for the sensors.
Similarly, for any sensor subset $\mathbf{s} \subseteq \{1,2, \ldots, p\}$, we denote
the observability matrix for $\mathbf{s}$ by $\O_{\mathbf{s}}$ (formed by stacking
the observability matrices of sensors in $\mathbf{s}$).
Without loss of generality, we will consider the observability index $\mu_i =n$ for each sensor.
For any sensor subset $\mathbf{s}$ with $|\mathbf{s}| > k$, we define $\lambda_{\min,\mathbf{s} \setminus k}$ as follows:
\begin{align}
\lambda_{\min,\mathbf{s} \setminus k} = \min_{\mathbf{s}_1 \subset \mathbf{s}, \;
 |\mathbf{s}_1| =|\mathbf{s}|-k} 
\lambda_{\min} \left ( \O_{\mathbf{s}_1}^T \O_{\mathbf{s}_1}  \right), 
\end{align} 
where $\lambda_{\min}\left ( \O_{\mathbf{s}_1}^T \O_{\mathbf{s}_1}  \right)$ denotes
the minimum eigenvalue of $\O_{\mathbf{s}_1}^T \O_{\mathbf{s}_1}$.
We define matrices $\mathbf{J}_i$,
$\J$
and
$\M$ as shown below:
\begin{align}
\mathbf{J}_i & = \begin{bmatrix} \mathbf{0} & \mathbf{0}  & \hdots & \mathbf{0} \\
\mathbf{C}^T_i & \mathbf{0}  & \hdots & \mathbf{0} \\
\mathbf{C}^T_i \mathbf{A} & \mathbf{C}^T_i  & \hdots & \mathbf{0} \\
\vdots & \vdots & \ddots & \vdots \\
\mathbf{C}^T_i \mathbf{A}^{{\mu_i}-2} & \mathbf{C}^T_i \mathbf{A}^{{\mu_i}-3 } & \hdots & \mathbf{C}^T_i \\
\end{bmatrix}, \;\;
\J  = \begin{bmatrix} \mathbf{J}_1 \\  \mathbf{J}_2 
\\ \vdots \\ \mathbf{J}_p
 \end{bmatrix}, 
\nonumber \\
 \M & =  \sigma^2_w \J  \J^T   + \sigma^2_v \mathbf{I}_{n p}.
\end{align}
In a similar spirit,
$\J_{\mathbf{s}}$ is defined for a sensor subset $\mathbf{s}$
by stacking $\mathbf{J}_i$ for $i \in \s$, and
$\M_{\mathbf{s}} =  \sigma^2_w \J_{\s}  \J_{\s}^T   + \sigma^2_v \mathbf{I}_{n | s | } $.
We use the following notation for sensor outputs and noises corresponding to a time window of size
$\mu_i =n$ (observability index):
\begin{align}
\mathbf{y}_{i}(t) &= \begin{bmatrix}
y_i (t) \\ y_i (t+1) \\ \vdots \\  y_i  (t+ \mu_i -1)
\end{bmatrix}, \;
\mathbf{v}_{i}(t)  = \begin{bmatrix}
v_i (t) \\ v_i (t+1) \\ \vdots \\  v_i  (t+ \mu_i -1)
\end{bmatrix}, \nonumber \\
\bar{\mathbf{y}}(t) & = \begin{bmatrix}
\mathbf{y}_1(t) \\ \mathbf{y}_{2}(t)  \\ \vdots \\  \mathbf{y}_{p}(t)
\end{bmatrix}, \;
\bar{\mathbf{v}}(t) = \begin{bmatrix}
\mathbf{v}_1(t) \\ \mathbf{v}_{2}(t)  \\ \vdots \\  \mathbf{v}_{p}(t)
\end{bmatrix}, \;
\bar{\mathbf{w}}(t) = \begin{bmatrix}
\mathbf{w}(t) \\ \mathbf{w}(t+1)  \\ \vdots \\  \mathbf{w}(t+n-1)
\end{bmatrix},
\end{align}
where $y_i(t)$ and $v_i(t)$
denote the output and sensor noise at sensor $i$
at time $t$ respectively.
\subsubsection{Attack Detection Algorithm}
We consider the attack detection problem for a time window
$G = \{t_1, t_1 + 1, \ldots, t_1 + N-1 \}$
, and assume without loss of generality that
the window size $N$ is divisible by $n$.
\begin{algorithm}[t]
\caption{\textsc{Attack-Detect}$(\mathbf{s},t_1)$}
\begin{algorithmic}[1]
\STATE  Run a Kalman filter that uses all measurements
from sensors indexed by $\mathbf{s}$
until time $t_1 -1 $ and compute the estimate $\hat{\x}_{\mathbf{s}} (t_1) \in \R^n$.
\STATE Recursively repeat the previous step $N-1$ times to calculate all estimates $\hat{\x}_{\mathbf{s}} (t) \in \R^n$, $ \forall t  \in G = \{t_1, t_1 + 1, \ldots, t_1 + N-1 \}$.
\STATE For time $t \in G$,
calculate the \textit{block} residue:
\begin{align*}
\mathbf{r}_{\mathbf{s}}(t) =  \bar{\mathbf{y}}_{\mathbf{s}}(t) -  \O_{\mathbf{s}} \hat{\x}_{\mathbf{s}}(t)\quad  \forall t \in G.
\end{align*}
\IF{block residue test defined below holds,
\begin{align*}
 \E_{N,t_1} \left(  \res_{\mathbf{s}}  \res_{\mathbf{s}}^T \right)  - 
\left ( \O_{\mathbf{s}}  \mathbf{P}^\star_{\mathbf{s}} \O_{\mathbf{s}}^T   +
 \M_{\mathbf{s}} \right)
  \preccurlyeq   \eta \; \mathbf{1}_{n|{\mathbf{s}}|} \mathbf{1}^T_{n|{\mathbf{s}}|},
\end{align*}
where $0 < \eta \leq     \left( \frac {\lambda_{\min,\mathbf{s} \setminus k}} {3n(|{\mathbf{s}}|-k)} \right) \epsilon$ ,
}
\STATE assert $\hat{d}_{\text{attack},\mathbf{s}}(t_1) := 0$
\ELSE
\STATE assert $\hat{d}_{\text{attack},\mathbf{s}}(t_1):= 1$
\ENDIF
\STATE \textbf{return} $( \hat{d}_{\text{attack},\mathbf{s}}(t_1), \{ \hat{\x}_{\mathbf{s}}(t) \}_{t \in G} )$
\end{algorithmic}
\label{alg:detector}
\end{algorithm}
For a sensor subset $\mathbf{s}$ with $|\mathbf{s}| > k$, we start by computing the state estimate $\hat{\x}_{\mathbf{s}}(t_1)$ obtained through a Kalman filter that uses measurements  collected from time $0$ up to time $t_1 - 1$ from all sensors indexed by the subset $\mathbf{s}$.
Using this estimate,
we can calculate the \textit{block} residue
$\res_{\mathbf{s}}(t_1)$ which is the discrepancy
between the estimated output
$\hat{\overline{\mathbf{y}}}_{\mathbf{s}}(t_1) = \O_{\mathbf{s}} \hat{\x}_{\mathbf{s}}(t_1)$ and the actual output $\overline{\mathbf{y}}_{\mathbf{s}}(t_1)$, \emph{i.e.},
\begin{align}
 \res_{\mathbf{s}}(t_1) = \overline{\mathbf{y}}_{\mathbf{s}}(t_1) - \hat{\overline{\mathbf{y}}}_{\mathbf{s}}(t_1) =  \overline{\mathbf{y}}_{\mathbf{s}}(t_1) - \O_{\mathbf{s}} \hat{\x}_{\mathbf{s}}(t_1).
 \label{eq:fast_residue_test}
 \end{align}
By repeating the previous procedure $N-1$ times, we can obtain the sequence of residues $\{\res_{\s}(t)\}_{t \in G }$.
The next step is to calculate the
sample average of $\res_{\s}(t) \res_{\s}^T(t)$, and
compare the sample average with the expected value of $\res_{\s}(t) \res_{\s}^T(t)$ in the case when sensor subset $\mathbf{s}$ is attack-free.
This can be done using the following (block) residue test:
\begin{align}
 \E_{N,t_1} \left(  \res_{\mathbf{s}}  \res_{\mathbf{s}}^T \right)  - 
\left ( \O_{\mathbf{s}}  \mathbf{P}^\star_{\mathbf{s}} \O_{\mathbf{s}}^T   +
 \M_{\mathbf{s}} \right)
  \preccurlyeq   \eta \; \mathbf{1}_{n|{\mathbf{s}}|} \mathbf{1}^T_{n|{\mathbf{s}}|},
  \label{eq:fast_block_residue_test_prediction}
  \end{align}
for some $\eta > 0$.
Simply put, the residue test just checks whether the
sample average of $\res_{\mathbf{s}}(t)  \res_{\mathbf{s}}^T(t)$ over the time window $G$
is \textit{close} to its attack-free expected value $\O_{\mathbf{s}}  \mathbf{P}^\star_{\mathbf{s}} \O_{\mathbf{s}}^T   +
 \M_{\mathbf{s}}$ (details in Section~\ref{sec:detector_guarantees}).
It is crucial to recall that the attack-free estimation error covariance matrix
$\mathbf{P}^\star_{\mathbf{s}}$ used in \eqref{eq:fast_block_residue_test_prediction}
can be computed offline \cite{kailath_book}
without the need for any data collected from attack-free sensors.
If the element-wise comparison in the residue test
\eqref{eq:fast_block_residue_test_prediction} is valid,
we set the attack detection flag $\hat{d}_{\text{attack},\mathbf{s}}(t_1)$ to zero indicating that no attack was detected in sensor subset $\s$.
This procedure is summarized in Algorithm~\ref{alg:detector}. 
\subsection{Performance Guarantees} \label{sec:detector_guarantees}
In this subsection, we describe our first main result which is concerned with the correctness of Algorithm~\ref{alg:detector}.
\begin{lemma} \label{lemma:detection_guarantee}
Let the linear dynamical system as defined in~\eqref{eq:vector_system_model}
be $2{k}$-sparse observable.
Consider a $k$-adversary satisfying Assumptions $1-5$ and a sensor subset $\mathbf{s} \subseteq\{1,2,\ldots, p\}$ with $|\mathbf{s}| \geq p - k$.
For any $\epsilon > 0$ and $\delta > 0$, there exists a large enough time window length $N$ such that when Algorithm~\ref{alg:detector} terminates with $\hat{d}_{\text{attack},\mathbf{s}}(t_1) = 0 $, the following probability bound holds:
\begin{align}
 \mathbb{P} \Big(  tr\left(\E_{t_1,N} \left(\e_{\mathbf{s}} \e^T_\mathbf{s}\right) -   \mathbf{P}^\star_{\mathbf{s}}   \right) \le  \epsilon \Big) \ge 1 - \delta,
 \label{eq:estimation_property}
\end{align}
where $\e_{\mathbf{s}}(t) = \x(t) - \hat{\x}_{\mathbf{s}}(t)$.
In other words, for large enough
$N$,
the bound
$\displaystyle{tr\left(\E_{t_1,N} \left(\e_{\mathbf{s}} \e^T_\mathbf{s}\right) -   \mathbf{P}^\star_{\mathbf{s}}   \right) \le  \epsilon}$
holds with high probability\footnote{
By stating that
the bound holds with high probability
for large enough $N$, we mean that
for any $\delta > 0$ and $\epsilon > 0$,
$\exists N_{\delta, \epsilon} \in \mathbb{N}$ such that
for $N > N_{\delta,\epsilon}$,
$\mathbb{P} \Big(  tr\left(\E_{t_1,N} \left(\e_{\mathbf{s}} \e^T_\mathbf{s}\right) -   \mathbf{P}^\star_{\mathbf{s}}   \right) \le  \epsilon \Big) \ge 1 - \delta$.}
(w.h.p.).
Moreover, if the attack is an $(\epsilon, \mathbf{s})$-effective attack,
the following also holds:
\begin{align}
 \mathbb{P} \left( \hat{d}_{\text{attack}, \mathbf{s}}(t_1)  = d_{\text{attack},\mathbf{s}}(t_1)  \right) \ge 1 - \delta,
\label{eq:detector_property} 
\end{align}
where $\hat{d}_{\text{attack}, \mathbf{s}}(t_1)$ is the output of Algorithm~\ref{alg:detector} while $d_{\text{attack},\mathbf{s}}(t_1)$ is the output of an oracle detector that knows 
the exact set of attacked sensors.
Hence,
Algorithm~\ref{alg:detector} can detect any
$(\epsilon,\mathbf{s})$-effective attack
w.h.p. for large enough $N$.
\label{lemma:detector_guarantee}
\end{lemma}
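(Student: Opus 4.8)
The plan is to reduce the state‑estimation claim \eqref{eq:estimation_property} to an inequality on the block residue, invert the observability matrix on an attack‑free subset of $\mathbf{s}$ using $2k$‑sparse observability, and then discharge the probabilistic content with a weak law of large numbers plus a martingale argument; the detector claim \eqref{eq:detector_property} then follows as the contrapositive. First I would rewrite the block residue. From \eqref{eq:vector_system_model} and \eqref{eq:obs_defn}, $\bar{\mathbf{y}}_{\mathbf{s}}(t)=\O_{\mathbf{s}}\x(t)+\J_{\mathbf{s}}\bar{\mathbf{w}}(t)+\bar{\mathbf{v}}_{\mathbf{s}}(t)+\bar{\mathbf{a}}_{\mathbf{s}}(t)$, so $\res_{\mathbf{s}}(t)=\O_{\mathbf{s}}\e_{\mathbf{s}}(t)+\mathbf{n}_{\mathbf{s}}(t)+\bar{\mathbf{a}}_{\mathbf{s}}(t)$ with $\mathbf{n}_{\mathbf{s}}(t)=\J_{\mathbf{s}}\bar{\mathbf{w}}(t)+\bar{\mathbf{v}}_{\mathbf{s}}(t)$ and $\E(\mathbf{n}_{\mathbf{s}}(t)\mathbf{n}_{\mathbf{s}}^T(t))=\M_{\mathbf{s}}$. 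Because $|\mathbf{s}|\ge p-k$ and at most $k$ sensors are attacked, $\mathbf{s}$ contains an attack‑free subset $\mathbf{s}_1$ with $|\mathbf{s}_1|=|\mathbf{s}|-k$; on the rows indexed by $\mathbf{s}_1$ we have $\bar{\mathbf{a}}_{\mathbf{s}_1}=0$, hence $\res_{\mathbf{s}_1}(t)=\O_{\mathbf{s}_1}\e_{\mathbf{s}}(t)+\mathbf{n}_{\mathbf{s}_1}(t)$. Since $|\mathbf{s}_1|\ge p-2k$, $2k$‑sparse observability and monotonicity of observability give that $\O_{\mathbf{s}_1}$ has full column rank and $\lambda_{\min}(\O_{\mathbf{s}_1}^T\O_{\mathbf{s}_1})\ge\lambda_{\min,\mathbf{s}\setminus k}>0$ (this is exactly why $\eta$ can be chosen positive).

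Next, assume Algorithm~\ref{alg:detector} terminates with $\hat d_{\text{attack},\mathbf{s}}(t_1)=0$, i.e.\ $\E_{N,t_1}(\res_{\mathbf{s}}\res_{\mathbf{s}}^T)-(\O_{\mathbf{s}}\mathbf{P}^\star_{\mathbf{s}}\O_{\mathbf{s}}^T+\M_{\mathbf{s}})\preccurlyeq\eta\,\mathbf{1}\mathbf{1}^T$; restricting to the principal submatrix indexed by $\mathbf{s}_1$ on each side (which preserves $\preccurlyeq$), and noting that the $\mathbf{s}_1$‑submatrices of $\O_{\mathbf{s}}\mathbf{P}^\star_{\mathbf{s}}\O_{\mathbf{s}}^T$ and $\M_{\mathbf{s}}$ are $\O_{\mathbf{s}_1}\mathbf{P}^\star_{\mathbf{s}}\O_{\mathbf{s}_1}^T$ and $\M_{\mathbf{s}_1}$, gives $\E_{N,t_1}(\res_{\mathbf{s}_1}\res_{\mathbf{s}_1}^T)\preccurlyeq\O_{\mathbf{s}_1}\mathbf{P}^\star_{\mathbf{s}}\O_{\mathbf{s}_1}^T+\M_{\mathbf{s}_1}+\eta\,\mathbf{1}\mathbf{1}^T$. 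Expanding $\res_{\mathbf{s}_1}(t)=\O_{\mathbf{s}_1}\e_{\mathbf{s}}(t)+\mathbf{n}_{\mathbf{s}_1}(t)$,
\[
\E_{N,t_1}(\res_{\mathbf{s}_1}\res_{\mathbf{s}_1}^T)=\O_{\mathbf{s}_1}\E_{N,t_1}(\e_{\mathbf{s}}\e_{\mathbf{s}}^T)\O_{\mathbf{s}_1}^T+\O_{\mathbf{s}_1}\hat{\mathbf C}+\hat{\mathbf C}^T\O_{\mathbf{s}_1}^T+\hat{\mathbf N},
\]
with $\hat{\mathbf C}=\E_{N,t_1}(\e_{\mathbf{s}}\mathbf{n}_{\mathbf{s}_1}^T)$ and $\hat{\mathbf N}=\E_{N,t_1}(\mathbf{n}_{\mathbf{s}_1}\mathbf{n}_{\mathbf{s}_1}^T)$. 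Setting $\mathbf D=\E_{N,t_1}(\e_{\mathbf{s}}\e_{\mathbf{s}}^T)-\mathbf{P}^\star_{\mathbf{s}}$, the residue inequality becomes $\O_{\mathbf{s}_1}\mathbf D\,\O_{\mathbf{s}_1}^T\preccurlyeq\eta\,\mathbf{1}\mathbf{1}^T-\O_{\mathbf{s}_1}\hat{\mathbf C}-\hat{\mathbf C}^T\O_{\mathbf{s}_1}^T-(\hat{\mathbf N}-\M_{\mathbf{s}_1})$. Taking traces and using $\operatorname{tr}(\O_{\mathbf{s}_1}\mathbf D\,\O_{\mathbf{s}_1}^T)=\operatorname{tr}(\O_{\mathbf{s}_1}^T\O_{\mathbf{s}_1}\mathbf D)\ge\lambda_{\min,\mathbf{s}\setminus k}\operatorname{tr}(\mathbf D)$ (valid once $\mathbf D\succeq-o(1)\mathbf I$, see the next paragraph),
\[
\lambda_{\min,\mathbf{s}\setminus k}\operatorname{tr}(\mathbf D)\ \le\ \eta\,n|\mathbf{s}_1|+2\bigl|\operatorname{tr}(\O_{\mathbf{s}_1}\hat{\mathbf C})\bigr|+\bigl|\operatorname{tr}(\hat{\mathbf N}-\M_{\mathbf{s}_1})\bigr|.
\]
Since $|\mathbf{s}_1|=|\mathbf{s}|-k$ and $\eta\le\lambda_{\min,\mathbf{s}\setminus k}\,\epsilon/(3n(|\mathbf{s}|-k))$, the first term is at most $\lambda_{\min,\mathbf{s}\setminus k}\epsilon/3$; the last two terms vanish w.h.p.\ as $N\to\infty$ (shown below), so $\operatorname{tr}(\mathbf D)\le\epsilon/3+o(1)\le\epsilon$ for $N$ large enough, w.h.p., which is \eqref{eq:estimation_property}.

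The remaining (and principal) work is probabilistic, comprising three ingredients. (i) $\operatorname{tr}(\hat{\mathbf N})\to\operatorname{tr}(\M_{\mathbf{s}_1})$ w.h.p.: a weak law of large numbers for the stationary, $n$‑dependent Gaussian process $\{\mathbf{n}_{\mathbf{s}_1}(t)\}$, obtained by splitting $G$ into $n$ interleaved subsequences of mutually independent length‑$n$ blocks. (ii) $\operatorname{tr}(\O_{\mathbf{s}_1}\hat{\mathbf C})=\E_{N,t_1}\langle\O_{\mathbf{s}_1}\e_{\mathbf{s}},\mathbf{n}_{\mathbf{s}_1}\rangle\to0$ w.h.p.: by Assumptions~\ref{ass:A1}--\ref{ass:A2}, $\e_{\mathbf{s}}(t)$ is a function of the process and sensor noise and of the attack at times $<t$, and is therefore independent of $\mathbf{n}_{\mathbf{s}_1}(t)$, which is a zero‑mean function of the process noise and of the attack‑free sensor noise at times $\ge t$; hence, along each of the $n$ interleaved block‑subsequences $\{\tau_j\}$, $\langle\O_{\mathbf{s}_1}\e_{\mathbf{s}}(\tau_j),\mathbf{n}_{\mathbf{s}_1}(\tau_j)\rangle$ is a martingale difference with conditional second moment $\e_{\mathbf{s}}(\tau_j)^T\O_{\mathbf{s}_1}^T\M_{\mathbf{s}_1}\O_{\mathbf{s}_1}\e_{\mathbf{s}}(\tau_j)$, so its quadratic variation is controlled by $\sum_j\|\O_{\mathbf{s}_1}\e_{\mathbf{s}}(\tau_j)\|^2$; and when $\hat d_{\text{attack},\mathbf{s}}(t_1)=0$ the residue inequality forces $\E_{N,t_1}\|\res_{\mathbf{s}_1}\|^2$, hence (via the eigenvalue bound) $\E_{N,t_1}\|\e_{\mathbf{s}}\|^2$, to be bounded by a constant w.h.p., after which a martingale concentration inequality makes the average vanish. (iii) The near‑positive‑semidefiniteness $\mathbf D\succeq-o(1)\mathbf I$: writing $\e_{\mathbf{s}}(t)=\e^\star_{\mathbf{s}}(t)+\tilde{\e}_{\mathbf{s}}(t)$ as the clean steady‑state Kalman error plus the (linear) attack contribution, $\E_{N,t_1}(\e^\star_{\mathbf{s}}\e^{\star T}_{\mathbf{s}})\to\mathbf{P}^\star_{\mathbf{s}}$ by ergodicity, $\E_{N,t_1}(\tilde{\e}_{\mathbf{s}}\tilde{\e}_{\mathbf{s}}^T)\succeq0$, and one must show the cross term $\E_{N,t_1}(\e^\star_{\mathbf{s}}\tilde{\e}_{\mathbf{s}}^T)$ is negligible so that the attack cannot drive the averaged Kalman error below its optimal noise floor in any direction. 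I expect this last step to be the main obstacle: $\tilde{\e}_{\mathbf{s}}(t)$ is correlated with $\e^\star_{\mathbf{s}}(t)$ through the adversary's causal knowledge of the noise, and it is precisely here that the steady‑state hypothesis and the causality Assumptions~\ref{ass:A1}--\ref{ass:A2} (together with the orthogonality/optimality of the Kalman filter on the uncorrupted measurements) must be used to kill this cross term; everything else above is bookkeeping.

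Finally, \eqref{eq:detector_property} is immediate: if the attack is $(\epsilon,\mathbf{s})$‑effective then, by Definition~\ref{def:effective_attack}, $\operatorname{tr}\bigl(\E_{N,t_1}(\e_{\mathbf{s}}\e_{\mathbf{s}}^T)\bigr)>\operatorname{tr}(\mathbf{P}^\star_{\mathbf{s}})+\epsilon$, so the event of \eqref{eq:estimation_property} is violated; by the already‑established \eqref{eq:estimation_property}, the outcome $\hat d_{\text{attack},\mathbf{s}}(t_1)=0$ can occur with probability at most $\delta$, hence $\hat d_{\text{attack},\mathbf{s}}(t_1)=1$ with probability at least $1-\delta$. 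Since the oracle detector outputs $d_{\text{attack},\mathbf{s}}(t_1)=1$ on an $(\epsilon,\mathbf{s})$‑effective attack, $\mathbb{P}\bigl(\hat d_{\text{attack},\mathbf{s}}(t_1)=d_{\text{attack},\mathbf{s}}(t_1)\bigr)\ge1-\delta$.
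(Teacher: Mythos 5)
Your proposal follows essentially the same route as the paper's proof of Lemma~\ref{lemma:detection_guarantee}: extract an attack-free subset of $\mathbf{s}$ of size at least $p-2k$ (the paper's $\mathbf{s}_g$, your $\mathbf{s}_1$), restrict the element-wise residue test to that block and take traces to get the bound $n(|\mathbf{s}|-k)\eta$, expand $\res_{\mathbf{s}_1}(t)=\O_{\mathbf{s}_1}\e_{\mathbf{s}}(t)+\mathbf{n}_{\mathbf{s}_1}(t)$, dispose of $\E_{N,t_1}(\mathbf{n}_{\mathbf{s}_1}\mathbf{n}_{\mathbf{s}_1}^T)-\M_{\mathbf{s}_1}$ by an interleaved-block law of large numbers, kill the cross term using Assumptions~\ref{ass:A1}--\ref{ass:A2} (the paper shows zero mean and vanishing variance and applies Chebyshev, using exactly the a~priori boundedness of $\E_{N,t_1}(\e_{\mathbf{s}}^T\e_{\mathbf{s}})$ that you invoke; your martingale phrasing is an equivalent packaging of the same independence structure), convert the weighted trace to $tr(\mathbf{D})$ via the eigenvalue bound and $2k$-sparse observability, and obtain \eqref{eq:detector_property} directly from Definition~\ref{def:effective_attack}. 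The bookkeeping of constants ($\epsilon/3$ per term versus the paper's $3\epsilon_1/\lambda_{\min,\mathbf{s}\setminus k}\le\epsilon$) is equivalent.

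The one substantive issue is your item (iii), which you flag but do not resolve. You are right that the step $tr\left(\mathbf{D}\,\O_{\mathbf{s}_1}^T\O_{\mathbf{s}_1}\right)\ge\lambda_{\min}\left(\O_{\mathbf{s}_1}^T\O_{\mathbf{s}_1}\right)tr\left(\mathbf{D}\right)$ (Lemma~\ref{lemma:eigen_bound}) requires $\mathbf{D}=\E_{N,t_1}(\e_{\mathbf{s}}\e_{\mathbf{s}}^T)-\mathbf{P}^\star_{\mathbf{s}}$ to be positive semidefinite, and this is not automatic: Assumption~\ref{ass:A1} permits $\mathbf{a}(t)$ to depend on $\mathbf{w}(t)$, so a causal adversary can in principle leak current process noise through an attacked sensor and push the prediction error \emph{below} $\mathbf{P}^\star_{\mathbf{s}}$ along some directions; if those directions align with the large eigenvalues of $\O_{\mathbf{s}_1}^T\O_{\mathbf{s}_1}$, the weighted trace can remain small while $tr(\mathbf{D})$ is large (take $\O_{\mathbf{s}_1}^T\O_{\mathbf{s}_1}=\mathrm{diag}(1,10)$ and $\mathbf{D}=\mathrm{diag}(10,-1)$ for the shape of the counterexample to the bare inequality). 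The paper's own proof applies Lemma~\ref{lemma:eigen_bound} at step $(c)$ without verifying this hypothesis, so your observation is not a defect relative to the paper but an additional (and valid) criticism of it; nevertheless, as written your proof is incomplete at exactly this point, and the decomposition $\e_{\mathbf{s}}=\e^\star_{\mathbf{s}}+\tilde{\e}_{\mathbf{s}}$ you sketch would still need a quantitative argument that the sample-averaged error cannot fall non-negligibly below $\mathbf{P}^\star_{\mathbf{s}}$ in any direction (an MMSE-optimality/orthogonality argument that must itself contend with the adversary's access to $\mathbf{w}(t)$ at time $t$). Everything else in the proposal matches the paper's argument.
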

\begin{proof}[Proof of Lemma~\ref{lemma:detection_guarantee}]
We focus only on showing that~\eqref{eq:estimation_property}
holds whenever  Algorithm~\ref{alg:detector} terminates with $\hat{d}_{\text{attack},\mathbf{s}}(t_1) = 0$; the rest of the lemma follows easily from Definition~\ref{def:effective_attack}.
Since we assume that the set $\mathbf{s}$ has cardinality $\vert \mathbf{s} \vert \ge p - k$,
we can conclude that
there exists a subset $\mathbf{s}_g \subset \mathbf{s}$ with cardinality  $\vert \mathbf{s}_g \vert \ge p -2k$ sensors such that all its sensors are attack-free
(subscript $g$ in $\mathbf{s}_g$ stands for \textit{good} sensors in $\mathbf{s}$). 
Hence, by decomposing the set $\mathbf{s}$  into an attack-free set $\mathbf{s}_g$ and a potentially attacked set $\s \setminus \mathbf{s}_g$, we can conclude that, after a permutation similarity transformation for \eqref{eq:fast_block_residue_test_prediction},
the following holds for the attack-free subset $\mathbf{s}_g$:
\begin{align}
 \E_{N,t_1} \left(  \res_{\mathbf{s}_g}  \res_{\mathbf{s}_g}^T \right)  -\O_{\mathbf{s}_g}  \mathbf{P}^\star_{\mathbf{s}} \O_{\mathbf{s}_g}^T   - \M_{\mathbf{s}_g}   \preccurlyeq   \eta \; \mathbf{1}_{n(|\mathbf{s}|-k)} \mathbf{1}^T_{n(|\mathbf{s}|-k)} \nonumber .
\end{align}
Therefore,
{\allowdisplaybreaks[4]
\begin{align}
&tr \left(  \E_{N,t_1} \left(  \res_{\mathbf{s}_g}  \res_{\mathbf{s}_g}^T \right)
 -\O_{\mathbf{s}_g}  \mathbf{P}^\star_{\mathbf{s}} \O_{\mathbf{s}_g}^T  
 - \M_{\mathbf{s}_g}  \right) 
\nonumber \\
& \qquad \qquad \qquad \qquad \qquad \qquad   \leq    n(|\mathbf{s}|-k)  \eta 
 = \epsilon_1.
\label{eq:fast_block_residue_test_prediction_subset_trace}
\end{align}}
Similarly, after a suitable permutation $\Pi$,
we can decompose the block residue
$\res_{\mathbf{s}}(t)$ defined in equation~\eqref{eq:fast_residue_test} as follows:
{\allowdisplaybreaks[4]
\begin{align}
\Pi \left (\res_{\s}(t) \right) &= \matrix{\res_{\mathbf{s}_g}(t) \\ \res_{\mathbf{s}\setminus\mathbf{s}_g}(t) }
= \matrix{\overline{\mathbf{y}}_{\mathbf{s}_g}(t) - \O_{\mathbf{s}_g} \hat{\x}_{\mathbf{s}}(t) \\
\overline{\mathbf{y}}_{\mathbf{s}\setminus\mathbf{s}_g}(t) - \O_{\mathbf{s}\setminus\mathbf{s}_g} \hat{\x}_{\mathbf{s}}(t) } \nonumber \\
&= \matrix{\O_{\mathbf{s}_g} \x(t) + \J_{\mathbf{s}_g}  \bar{\mathbf{w}}(t) + \bar{\mathbf{v}}_{\mathbf{s}_g}(t) - \O_{\mathbf{s}_g}\hat{ \x}_{\s}(t)   \\ \overline{\mathbf{y}}_{\mathbf{s}\setminus\mathbf{s}_g}(t) - \O_{\mathbf{s}\setminus\mathbf{s}_g} \hat{\x}_{\mathbf{s}}(t)} \nonumber\\
&= \matrix{\O_{\mathbf{s}_g}\e_{\mathbf{s}}(t) +\mathbf{z}_{\mathbf{s}_g}(t)   \\ \overline{\mathbf{y}}_{\mathbf{s}\setminus\mathbf{s}_g}(t) - \O_{\mathbf{s}\setminus\mathbf{s}_g} \hat{\x}_{\mathbf{s}}(t) },
\label{eq:block_residue_expansion}
\end{align}}where $\mathbf{z}_{\mathbf{s}_g}(t) = \J_{\mathbf{s}_g}  \bar{\mathbf{w}}(t) + \bar{\mathbf{v}}_{\mathbf{s}_g}(t)$. Using \eqref{eq:block_residue_expansion}, 
we can rewrite $tr \left (\E_{N,t_1} \left( \res_{\mathbf{s}_g} \res_{\mathbf{s}_g}^T \right) \right)$
as:
{\allowdisplaybreaks[4]
\begin{align}
 & tr \left(  \E_{N,t_1} \left(  \res_{\mathbf{s}_g}  \res_{\mathbf{s}_g}^T \right) \right) 
\nonumber \\ & \qquad
 =
tr \left(  \O_{\mathbf{s}_g} \E_{N,t_1} \left( \mathbf{e}_{\mathbf{s}} \mathbf{e}_{\mathbf{s}}^T \right) \O_{\mathbf{s}_g}^T \right)
+  tr \left ( \E_{N,t_1} \left( \z_{\mathbf{s}_g} \z_{\mathbf{s}_g}^T \right) \right) 
\nonumber  \\
 & \qquad  \quad + 2 \E_{N,t_1} \left(  \mathbf{e}_{\mathbf{s}}^T  \O^T_{\mathbf{s}_g}  \z_{\mathbf{s}_g} \right).
\label{eq:residue_simplification_1}
\end{align}}
By combining 
\eqref{eq:fast_block_residue_test_prediction_subset_trace} and \eqref{eq:residue_simplification_1}:
\begin{align}
&  tr \left ( 
\O_{\mathbf{s}_g} \E_{N,t_1} \left( \mathbf{e}_{\mathbf{s}} \mathbf{e}_{\mathbf{s}}^T \right) \O_{\mathbf{s}_g}^T  
 - \O_{\mathbf{s}_g}  \mathbf{P}^\star_{\mathbf{s}} \O_{\mathbf{s}_g}^T \right)
 \nonumber \\
& \qquad \qquad \leq 
  tr \left ( \M_{\mathbf{s}_g}  \right)   
-  tr \left( \E_{N,t_1} \left( 
\z_{\mathbf{s}_g} \z_{\mathbf{s}_g}^T \right)  \right)
 +  \epsilon_1 
\nonumber \\ & \qquad \qquad \quad 
-  2\E_{N,t_1} \left(  \mathbf{e}_{\mathbf{s}}^T  \O^T_{\mathbf{s}_g}  \z_{\mathbf{s}_g} \right)  
\nonumber \\
& \qquad \qquad \stackrel{(a)}\leq 2 \epsilon_1  -  2\E_{N,t_1} \left(  \mathbf{e}_{\mathbf{s}}^T  \O^T_{\mathbf{s}_g}  \z_{\mathbf{s}_g} \right) \label{eq:LLN_z}\\
 & \qquad \qquad \stackrel{(b)}\leq 3 \epsilon_1,  \label{eq:3epsilon_1_bound}
\end{align}
where $(a)$ follows w.h.p. due to the law of large numbers (LLN)
for large enough $N$ (details in Appendix \ref{sec:appendix_LLN_proof}), and $(b)$ follows w.h.p. by showing that the cross term $2\E_{N,t_1} \left(  \mathbf{e}^T  \O^T_{\mathbf{s}_g}  \z_{\mathbf{s}_g} \right)$
has zero mean and vanishingly small variance for large enough $N$.
The cross term analysis is described in detail in Appendix \ref{sec:appendix_cross_term_analysis}.
Now recall that for any two matrices, $\mathbf{A}$ and $\mathbf{B}$ of
appropriate dimensions,
$tr \left ( \mathbf{AB} \right) = tr \left ( \mathbf{B} \mathbf{A} \right)$.
Using this fact along with \eqref{eq:3epsilon_1_bound},
the following holds:
\begin{align}
tr \left ( 
 \E_{N,t_1} \left(   \mathbf{e}_{\mathbf{s}} \mathbf{e}_{\mathbf{s}}^T -  \mathbf{P}^\star_{\mathbf{s}} \right)
\O^T_{\mathbf{s}_g}  \O_{\mathbf{s}_g}  
\right) 
&  \leq  3 \epsilon_1, 
\end{align} 
and hence, we get the following bound which completes the proof:
\begin{align}
 tr \left ( 
 \E_{N,t_1} \left( \mathbf{e}_{\mathbf{s}} \mathbf{e}_{\mathbf{s}}^T \right)  
 -  \mathbf{P}^\star_{\mathbf{s}} \right)
& \stackrel{(c)} \leq \frac{3 \epsilon_1}{\lambda_{min} \left (  \O^T_{\mathbf{s}_g}  \O_{\mathbf{s}_g} \right) }  \stackrel{(d)} \leq \frac{3 \epsilon_1}{\lambda_{\min,\mathbf{s} \setminus k}}
 \leq \epsilon \label{eq:detection_final_bound} 
\end{align} 
where $(c)$ follows from Lemma \ref{lemma:eigen_bound} in Appendix \ref{sec:trace_ineq} and $(d)$ follows from the definition of $\lambda_{\min,\mathbf{s} \setminus k}$.
Note that, it follows from $\vert \mathbf{s}_g \vert \ge p - 2k$ and $2k$-sparse observability, that both $\lambda_{min} \left (  \O^T_{\mathbf{s}_g}  \O_{\mathbf{s}_g} \right)$ and $\lambda_{\min,\mathbf{s} \setminus k}$ are bounded away from zero.
\end{proof}
\section{Effective Attack Detection and Secure State Estimation}
\label{sec:main_results}
Based on the performance guarantees for the \textsc{Attack-Detect} algorithm described
in Section~\ref{sec:detector},
in this section we describe our main results for Problems~\ref{prob:effective_attack} and~\ref{prob:optimal_sse}.
\subsection{Attack detection}
We start by showing a solution to
Problem~\ref{prob:effective_attack}
($\epsilon$-effective attack detection),
which follows directly from
Lemma~\ref{lemma:detector_guarantee}.
\begin{theorem}
Let the linear dynamical system defined in~\eqref{eq:vector_system_model}
be $k$-sparse observable system.
Consider a $k$-adversary satisfying Assumptions $1$-$5$,
and the detector
$\hat{d}_{\text{attack}}(t_1) = \textsc{Attack-Detect}(\mathbf{s}_{\text{all}}, t_1)$
where the set $\mathbf{s}_{\text{all}} = \{1,\ldots,p\}$.
Then, for large enough time window length $N$,
w.h.p. $\hat{d}_{\text{attack}}(t_1)$ is equal to the attack indicator
which solves Problem~\ref{prob:effective_attack}.
\label{th:detection}
\end{theorem}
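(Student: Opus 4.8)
The plan is to derive Theorem~\ref{th:detection} directly from Lemma~\ref{lemma:detector_guarantee} by specializing that lemma to the single sensor subset $\mathbf{s} = \mathbf{s}_{\text{all}} = \{1,\ldots,p\}$, which is exactly the subset on which the detector $\hat{d}_{\text{attack}}(t_1) = \textsc{Attack-Detect}(\mathbf{s}_{\text{all}}, t_1)$ operates. First I would address the one hypothesis mismatch: Lemma~\ref{lemma:detector_guarantee} is stated under $2k$-sparse observability, whereas Theorem~\ref{th:detection} only assumes $k$-sparse observability. The key observation is that for the choice $\mathbf{s} = \mathbf{s}_{\text{all}}$ this weaker assumption already suffices. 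In the proof of Lemma~\ref{lemma:detector_guarantee} the observability hypothesis is used in exactly one way, namely to guarantee that $\lambda_{\min}\!\left(\O_{\mathbf{s}_g}^T \O_{\mathbf{s}_g}\right)$ and $\lambda_{\min,\mathbf{s}\setminus k}$ are bounded away from zero (the latter is also what makes the threshold $\eta$ in Algorithm~\ref{alg:detector} well defined). When $\mathbf{s} = \mathbf{s}_{\text{all}}$, a $k$-adversary corrupts at most $k$ of the $p$ sensors, so the attack-free subset $\mathbf{s}_g$ satisfies $|\mathbf{s}_g| \ge p - k$ --- better than the $p - 2k$ bound available in the general statement --- and $k$-sparse observability then makes every relevant pair $(A, C_{\mathbf{s}_1})$ with $|\mathbf{s}_1| \ge p - k$ observable. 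I would check that no other step of the lemma's proof (the block-residue decomposition in~\eqref{eq:block_residue_expansion}, the law-of-large-numbers estimates, the vanishing-variance bound on the cross term) uses anything beyond this, so the proof of Lemma~\ref{lemma:detector_guarantee}, and hence its conclusions~\eqref{eq:estimation_property} and~\eqref{eq:detector_property}, carry over to $\mathbf{s} = \mathbf{s}_{\text{all}}$ under $k$-sparse observability.

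Next I would read off the statement. Fixing $\epsilon > 0$ and $\delta > 0$, Lemma~\ref{lemma:detector_guarantee} applied to $\mathbf{s}_{\text{all}}$ furnishes an $N$ large enough that: by~\eqref{eq:estimation_property}, whenever $\textsc{Attack-Detect}(\mathbf{s}_{\text{all}}, t_1)$ outputs $\hat{d}_{\text{attack}}(t_1) = 0$, with probability at least $1 - \delta$ one has $tr\!\left(\E_{N,t_1}\!\left(\e \e^T\right) - \mathbf{P}^\star\right) \le \epsilon$, so by Definition~\ref{def:effective_attack} the attack is $(\epsilon, \mathbf{s}_{\text{all}})$-ineffective; equivalently, an $(\epsilon, \mathbf{s}_{\text{all}})$-effective attack forces $\hat{d}_{\text{attack}}(t_1) = 1$ with high probability. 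By~\eqref{eq:detector_property}, on an event of probability at least $1 - \delta$ the detector's output agrees with that of an oracle detector that knows the attacked set $\pmb{\kappa}$. I would then combine these two facts to conclude that, for $N$ large enough, $\hat{d}_{\text{attack}}(t_1)$ equals with probability at least $1 - \delta$ the attack indicator specified in Problem~\ref{prob:effective_attack}.

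I expect the main (and essentially only) obstacle to be the observability bookkeeping in the first step --- making the passage from $2k$-sparse to $k$-sparse observability precise by tracking exactly where the eigenvalue lower bounds are used in the proof of Lemma~\ref{lemma:detector_guarantee} and confirming that the larger good-subset size $|\mathbf{s}_g| \ge p - k$ available here is all that is needed, together with the corresponding fact that a valid threshold $\eta$ still exists. Everything else --- the probabilistic content and the reduction to Definition~\ref{def:effective_attack} and Problem~\ref{prob:effective_attack} --- is immediate once the lemma is in hand.
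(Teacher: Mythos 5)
Your proposal is correct and follows essentially the same route as the paper: the paper's own proof likewise reduces Theorem~\ref{th:detection} to Lemma~\ref{lemma:detection_guarantee} by noting that for $\mathbf{s}=\mathbf{s}_{\text{all}}$ the attack-free subset $\mathbf{s}_g$ has size at least $p-k$, so $k$-sparse observability (rather than $2k$-sparse) already yields the observability of $(\mathbf{A},\mathbf{C}_{\mathbf{s}_g})$ needed for the eigenvalue lower bounds. Your additional bookkeeping on where $\lambda_{\min,\mathbf{s}\setminus k}$ and the threshold $\eta$ enter is a faithful elaboration of the same argument.
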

\begin{proof}
The proof is similar to the proof of Lemma~\ref{lemma:detection_guarantee}.
In the proof of Lemma~\ref{lemma:detection_guarantee},
we basically required the set of good sensors $\s_g$ to
form an observable system.
Similarly, while checking for effective attacks on a sensor set of size $p$,
we require the set of good sensors (of size $ \geq p-k$) to form an observable system in order to repeat the
steps in the proof for Lemma~\ref{lemma:detection_guarantee};
this requirement is guaranteed by the $k$-sparse observability condition.
On a related note, in Section~\ref{sec:sparse_proof}, we give a coding theoretic interpretation for the $k$-sparse observability requirement for
attack detection.
\end{proof}
\subsection{Secure State Estimation}
Algorithm~\ref{alg:secure_state_estimation_vanilla}
describes our proposed solution for 
Problem~\ref{prob:optimal_sse}
(secure state estimation).
As described in
Algorithm~\ref{alg:secure_state_estimation_vanilla},
we exhaustively enumerate
$p \choose{p-k}$ sensor subsets of size $p - k$,
and then apply \textsc{Attack-Detect} on each sensor subset
until we find one subset $\mathbf{s}^*$ for which \textsc{Attack-Detect} returns $\hat{d}_{\text{attack},\mathbf{s}^*}(t_1) = 0 $
indicating that the subset is ($\epsilon$-effective) attack-free.
The following theorem states the performance
guarantees associated with
Algorithm~\ref{alg:secure_state_estimation_vanilla}.
\begin{theorem} \label{thm:ach}
Let the linear dynamical system defined in~\eqref{eq:vector_system_model}
be $2k$-sparse observable system.
Consider a $k$-adversary satisfying Assumptions 1-5.
Consider the state estimate $\hat{\x}_{\mathbf{s}^*}(t)$ computed by Algorithm~\ref{alg:secure_state_estimation_vanilla}.
\begin{algorithm}[t]
\caption{\textsc{ Exhaustive Search}}
\begin{algorithmic}[1]
\STATE Enumerate all sets $\mathbf{s} \in \mathbf{S} $ such that:\\ \mbox{$\qquad \qquad \mathbf{S} = \{ \mathbf{s} | \mathbf{s} \subset \{1,2, \ldots , p\}, \; |\mathbf{s}| = p-k \}$}.
\STATE  Exhaustively search for $\mathbf{s}^* \in \mathbf{S}$ for which $d_{\text{attack}, \mathbf{s}^* }(t_1) = 0$ and use $\hat{\mathbf{x}}_{\mathbf{s}^*}(t)$ for $t \in G$ as the state estimate.
\end{algorithmic}
\label{alg:secure_state_estimation_vanilla}
\end{algorithm}
Then, for any $\epsilon > 0$ and $\delta > 0$,
there exists a large enough $N$ such that:
\begin{align}
\mathbb{P} & \left ( 
  tr \left(\E_{N,t_1} \left( \mathbf{e}_{\mathbf{s}^*}\mathbf{e}_{\mathbf{s}^*}^T \right) \right)
 \le 
 tr \left(  \mathbf{P}^\star_{\mathbf{s}_{\text{worst},p - k}}  \right)  + \epsilon 
\right )
\geq 1- \delta,
 \label{eq:sse}
\end{align}
where $\mathbf{e}_{\mathbf{s}^*}(t) = \x(t) - \hat{\x}_{\mathbf{s}^*}(t)$
is the estimation error using $\hat{\x}_{\mathbf{s}^*}(t)$ as the state estimate.
In other words, w.h.p. Algorithm~\ref{alg:secure_state_estimation_vanilla}
achieves the bound
$\displaystyle{\limsup_{N\rightarrow \infty} 
\frac{1}{N} \sum_{t \in G } \mathbf{e}_{\s^*}^T(t)\mathbf{e}_{\s^*}(t) \le tr\left( \mathbf{P}^\star_{\mathbf{s}_{\text{worst},p - k}} \right)}$.
\end{theorem}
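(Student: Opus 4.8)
The plan is to reduce the theorem almost entirely to Lemma~\ref{lemma:detection_guarantee}, together with two elementary observations: that the search in Algorithm~\ref{alg:secure_state_estimation_vanilla} finds \emph{some} subset passing the residue test with high probability, and that whatever subset $\mathbf{s}^*$ it returns has steady-state Kalman covariance no larger than that of $\mathbf{s}_{\text{worst},p-k}$. Concretely, since the $k$-adversary corrupts at most $k$ of the $p$ sensors, there is a size-$(p-k)$ subset $\mathbf{s}_{\text{clean}} \subseteq \{1,\ldots,p\}\setminus\pmb{\kappa}$ consisting only of attack-free sensors; by $2k$-sparse observability (in fact $k$-sparse observability suffices here) the pair $(\mathbf{A},\mathbf{C}_{\mathbf{s}_{\text{clean}}})$ is observable, so by the standing steady-state assumption the Kalman filter on $\mathbf{s}_{\text{clean}}$ is in steady state and $\res_{\mathbf{s}_{\text{clean}}}(t)$ has exactly its attack-free law. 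By the law of large numbers, $\E_{N,t_1}(\res_{\mathbf{s}_{\text{clean}}}\res_{\mathbf{s}_{\text{clean}}}^T)$ converges to $\O_{\mathbf{s}_{\text{clean}}}\mathbf{P}^\star_{\mathbf{s}_{\text{clean}}}\O_{\mathbf{s}_{\text{clean}}}^T + \M_{\mathbf{s}_{\text{clean}}}$, so for $N$ large the strict ($\eta>0$) residue test of Algorithm~\ref{alg:detector} passes on $\mathbf{s}_{\text{clean}}$ with probability at least $1-\delta'$; hence $\mathbf{s}^*$ is well defined with that probability.

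Second, for the subset $\mathbf{s}^*$ actually returned we have $\hat{d}_{\text{attack},\mathbf{s}^*}(t_1)=0$ and $|\mathbf{s}^*| = p-k$, so Lemma~\ref{lemma:detection_guarantee} applies (its hypothesis is only $|\mathbf{s}|\ge p-k$ plus $2k$-sparse observability, with no requirement that $\mathbf{s}^*$ be attack-free; recall its proof merely extracts a good subset $\mathbf{s}_g \subset \mathbf{s}^*$ of size $\ge p-2k$). It yields $tr\big(\E_{N,t_1}(\mathbf{e}_{\mathbf{s}^*}\mathbf{e}_{\mathbf{s}^*}^T) - \mathbf{P}^\star_{\mathbf{s}^*}\big) \le \epsilon$ with high probability. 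Since $|\mathbf{s}^*| = p-k$, the defining property of $\mathbf{s}_{\text{worst},p-k}$ as an $\argmax$ of $tr(\mathbf{P}^\star_{\mathbf{s}})$ over size-$(p-k)$ subsets gives $tr(\mathbf{P}^\star_{\mathbf{s}^*}) \le tr(\mathbf{P}^\star_{\mathbf{s}_{\text{worst},p-k}})$, and summing these two inequalities gives \eqref{eq:sse} on the relevant high-probability event.

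The point that needs care — and what I would call the main obstacle — is that $\mathbf{s}^*$ is data-dependent, so one cannot simply apply Lemma~\ref{lemma:detection_guarantee} to it and read off a probability; the resolution is a union bound over the finitely many candidates. For each fixed $\mathbf{s}$ with $|\mathbf{s}|=p-k$, let $B_{\mathbf{s}}$ be the event that $\mathbf{s}$ passes the residue test yet $tr(\E_{N,t_1}(\mathbf{e}_{\mathbf{s}}\mathbf{e}_{\mathbf{s}}^T)-\mathbf{P}^\star_{\mathbf{s}}) > \epsilon$; Lemma~\ref{lemma:detection_guarantee} (read as $\mathbb{P}(\hat{d}_{\text{attack},\mathbf{s}}(t_1)=0 \text{ and } tr(\cdots)>\epsilon)\le\delta'$) bounds $\mathbb{P}(B_{\mathbf{s}})\le\delta'$ once $N$ is large, and the first paragraph bounds by $\delta'$ the event $A$ that no clean size-$(p-k)$ subset passes the test. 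Choosing $\delta' = \delta/(\binom{p}{p-k}+1)$ and $N$ large enough for all of these simultaneously, on the complement of $A \cup \bigcup_{\mathbf{s}} B_{\mathbf{s}}$ the set $\mathbf{s}^*$ exists and satisfies $tr(\E_{N,t_1}(\mathbf{e}_{\mathbf{s}^*}\mathbf{e}_{\mathbf{s}^*}^T)) \le tr(\mathbf{P}^\star_{\mathbf{s}^*}) + \epsilon \le tr(\mathbf{P}^\star_{\mathbf{s}_{\text{worst},p-k}}) + \epsilon$, an event of probability at least $1-\delta$.

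Finally, the almost-sure $\limsup$ form follows from \eqref{eq:sse} by a standard Borel--Cantelli argument: applying \eqref{eq:sse} with $\epsilon_j \downarrow 0$ and summable $\delta_j$ produces a sequence $N_j$ along which $\frac{1}{N_j}\sum_{t\in G}\mathbf{e}_{\mathbf{s}^*}^T(t)\mathbf{e}_{\mathbf{s}^*}(t)$ exceeds $tr(\mathbf{P}^\star_{\mathbf{s}_{\text{worst},p-k}})+\epsilon_j$ only finitely often almost surely, which (restricting to that subsequence, as is implicit elsewhere in the paper) gives the stated bound. I expect no analytic difficulty beyond what is already packaged in Lemma~\ref{lemma:detection_guarantee}: the proof is essentially a combinatorial/union-bound wrapper around that lemma together with the $\argmax$ definition of $\mathbf{s}_{\text{worst},p-k}$, and it simultaneously establishes the matching converse from Theorem~\ref{thm:impos}, proving optimality.
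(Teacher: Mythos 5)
Your proof is correct and follows essentially the same route as the paper's: apply Lemma~\ref{lemma:detection_guarantee} to the returned subset $\mathbf{s}^*$ and combine the resulting bound $tr\left(\E_{N,t_1}\left(\mathbf{e}_{\mathbf{s}^*}\mathbf{e}_{\mathbf{s}^*}^T\right)\right) \le tr\left(\mathbf{P}^\star_{\mathbf{s}^*}\right) + \epsilon$ with the $\argmax$ definition of $\mathbf{s}_{\text{worst},p-k}$. If anything, your write-up is more careful than the paper's two-sentence argument, which silently omits both the verification that some subset passes the residue test (your LLN step on a fully attack-free size-$(p-k)$ subset) and the union bound over the $\binom{p}{p-k}$ candidates needed because $\mathbf{s}^*$ is data-dependent.
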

\begin{proof}
The result follows from Lemma~\ref{lemma:detector_guarantee} which ensures that, in the absence of the $(\epsilon,\mathbf{s})$-effective attack property, the calculated state estimate still guarantees the bound~\eqref{eq:estimation_property}.
This in turn implies that, in the worst case, $\displaystyle{\limsup_{N\rightarrow \infty} 
\frac{1}{N} \sum_{t \in G } \mathbf{e}_{{\s}^*}^T(t)\mathbf{e}_{\s^*}(t) = tr\left( \mathbf{P}^\star_{\mathbf{s}_{\text{worst},p - k}} \right)}$ is achievable.
However, since the $k$-adversary may not always attack the worst case set of sensors $\mathbf{s}_{\text{worst},p - k}$, we can replace the equality sign above with an inequality,
leading to $\displaystyle{\limsup_{N\rightarrow \infty} 
\frac{1}{N} \sum_{t \in G } \mathbf{e}_{{\s}^*}^T(t)\mathbf{e}_{\s^*}(t) \leq  tr\left( \mathbf{P}^\star_{\mathbf{s}_{\text{worst},p - k}} \right)}$.
\end{proof}
\section{Reducing Search Time
Using Satisfiability Modulo Theory Solving}
\label{sec:smt}

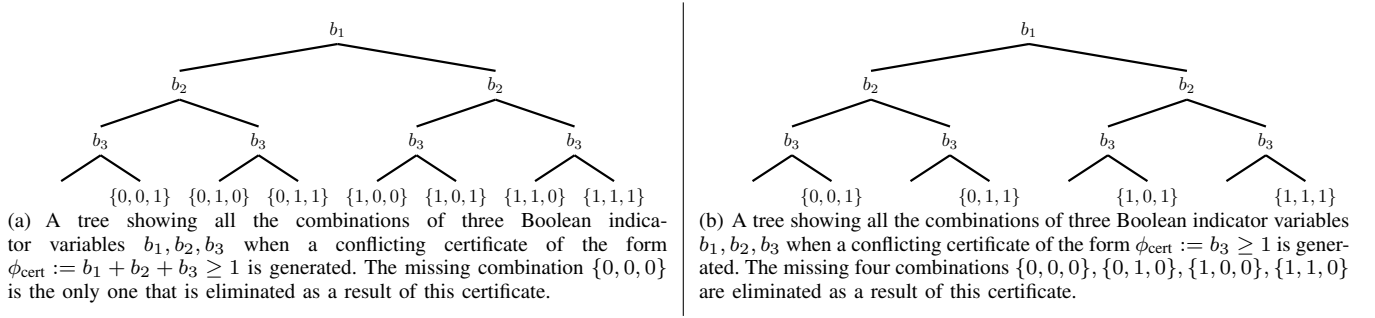
\begin{figure*}[t]
\centering
{
\begin{tabular}{c|c}
\subfigure[
A tree showing all the combinations of three Boolean indicator variables $b_1,b_2,b_3$ when a conflicting certificate of the form \mbox{$\phi_{\text{cert}} := b_1 + b_2 + b_3 \ge 1$} is generated. The missing combination $\{0,0,0\}$ is the only one that is eliminated as a result of this certificate.
]{\label{fig:cert1}
\resizebox{0.47\textwidth}{!}{
\begin{tikzpicture}
    \Tree 	[.$b_1$ 
    			[.$b_2$ 
				[.$b_3$ \phantom{$\{0,0,0\}$} $\{0,0,1\}$ ] 
				[.$b_3$ $\{0,1,0\}$ $\{0,1,1\}$ ] 
			] 
			[.$b_2$ 
				[.$b_3$ $\{1,0,0\}$ $\{1,0,1\}$ ]
				[.$b_3$ $\{1,1,0\}$ $\{1,1,1\}$ ] 
			]
		]
    ]
\end{tikzpicture} 
\vspace{5cm}
}
}
&
\subfigure[
A tree showing all the combinations of three Boolean indicator variables $b_1,b_2,b_3$ when a conflicting certificate of the form $\phi_{\text{cert}} := b_3 \ge 1$ is generated. The missing four combinations $\{0,0,0\}, \{0,1,0\}, \{1,0,0\}, \{1,1,0\}$ are eliminated as a result of this certificate.
]{\label{fig:cert2}
\resizebox{0.47\textwidth}{!}{
\begin{tikzpicture}
    \Tree 	[.$b_1$ 
    			[.$b_2$ 
				[.$b_3$ \phantom{$\{0,0,0\}$} $\{0,0,1\}$ ] 
				[.$b_3$ \phantom{$\{0,1,0\}$} $\{0,1,1\}$ ] 
			] 
			[.$b_2$ 
				[.$b_3$ \phantom{$\{1,0,0\}$} $\{1,0,1\}$ ]
				[.$b_3$ \phantom{$\{1,1,0\}$} $\{1,1,1\}$ ] 
			]
		]
    ]
\end{tikzpicture} 
}}
\end{tabular}
}
\caption{Pictorial example illustrating the effect of generating smaller conflicting certificates.}
\label{fig:certificate_reduction}
\end{figure*}

Algorithm~\ref{alg:secure_state_estimation_vanilla} exhaustively explores all
combinations of $p - k$ sensors until a set $\s^*$ satisfying $d_{\text{attack}, \s^*}(t_1) = 0$ is found.
In this section, we explore the idea of using sophisticated search techniques in order to harness the underlying combinatorial aspect of the secure state estimation problem.
In particular, we extend previous work by the authors and co-workers on using Satisfiability Modulo Theory (SMT)-like solvers~\cite{Yasser_SMT}, developed for the noiseless case, in order to improve the search time while preserving optimality of the solution.

The driving concept behind SMT solvers can be summarized as follows.
First, the search space of all sensor subsets with cardinality $p - k$,
is encoded using Boolean variables (the number of Boolean variables increases linearly with the number of sensors),
and a Boolean search engine (\emph{e.g.}, SAT solver) is used in order to traverse the search space.
Whenever the SAT solver suggests one possible solution in the search space, a higher level solver (typically referred to as the Theory-solver) is used to check the correctness of that particular solution. Finally, in order to prevent the SAT solver from enumerating all possible solutions in the search space, the Theory-solver generates counter examples (certificates), explaining why a particular solution is not valid. Each certificate is used by the SAT solver in order to prune the search space and hence enhance the performance of the overall algorithm. This methodology of ``counter-example guided search'' effectively breaks the secure state estimation problem into two simpler tasks over the Boolean and Reals domain.
Further details about this technique are described below.

\subsection{Overall Architecture}
We start by introducing a Boolean indicator variable $b = (b_1, \ldots, b_p) \in \B^p$ where the assignment $b_i = 1$ hypothesizes that the $i$th sensor is under attack while the
assignment
$b_i = 0$ hypothesizes that the $i$th sensor is attack-free. Using this indicator variable, $b$,
 we start by asking the \mbox{(pseudo-)Boolean} SAT solver to assign values to $b$ in order to satisfy the following formula:
\begin{align}
\phi(0) ::= \sum_{i = 1}^p b_i \le k, \label{eq:SMT_sum}
\end{align} 
which ensures that at most $k$ sensors are going to be hypothesized as being under attack
(the addition in \eqref{eq:SMT_sum} is over Reals).

In the next step, this  hypothesized assignment is then checked by the theory solver. This is done by running the \textsc{Attack-Detect} algorithm (Algorithm~\ref{alg:detector}) using only the set of hypothesized attack-free sensors $\mathbf{s}(b) = \{1, \ldots,p\} - \supp(b)$.
If the \textsc{Attack-Detect} algorithm returns $\hat{d}_{\text{attack}, \mathbf{s}(b)} = 0$
then our solver approves this hypothesis and the algorithm terminates. Otherwise, an UNSAT certificate (also known as a counter-example) is generated explaining why this assignment of  $b$ is not valid (\emph{i.e.}, a conflict). A trivial UNSAT certificate that can always be generated takes the following form (in iteration $j$):
\begin{align}
 \phi_{\text{cert}}(j) ::= \sum_{i \in \mathbf{s}(b)} b_i \ge 1 , 
\label{eq:trivial_cert}
\end{align}
which ensures that the current assignment of the variable $b$ is excluded. Once this UNSAT certificate is generated, the (pseudo-)Boolean SAT solver is then invoked again in the next iteration with the following constraints:
$$\phi(j+1) ::= \phi(j) \land \phi_{\text{cert}}(j),$$
until one assignment of the variable $b$ passes the attack detection test. This procedure is summarized in Algorithm~\ref{alg:smt}.

\subsection{Conflicting Certificates}
The generated UNSAT certificates heavily affect the overall execution time. Smaller UNSAT certificates prune the search space faster. For simplicity, consider the example shown in Figure~\ref{fig:certificate_reduction} where the vector $b$ has only three elements. On one hand, an UNSAT certificate that has the form $\phi_{\text{cert}} = b_1 + b_2 + b_3 \ge 1$ leads to pruning only one sample in the search space. On the other hand, a smaller UNSAT certificate that has the form $\phi_{\text{cert}} = b_1 \ge 1$ eliminates four samples in the search space which is indeed a higher reduction, and hence leads to better execution time.

To generate a compact (\emph{i.e.}, smaller) Boolean constraint that explains a conflict, we aim to find a small set of sensors that cannot all be attack-free.  To do so, we start by removing one sensor from the set $\mathbf{s}(b)$ and run the \textsc{Attack-Detect} algorithm on the reduced set $\mathbf{s}'(b)$ to obtain $\hat{d}_{\text{attack}, \mathbf{s}'(b)}$.
If $\hat{d}_{\text{attack}, \mathbf{s}'(b)}$ still equals one (which indicates that set $\s'(b)$ still contains a conflicting set of sensors), we generate the more compact certificate:
\begin{align}
 \phi_{\text{cert}}(j) ::= \sum_{i \in \mathbf{s}'(b)} b_i \ge 1.
\label{eq:conflict_cert}
\end{align}
We continue removing sensors one by one until we cannot find any more conflicting sensor sets.
Indeed, the order in which the sensors are removed is going to affect the overall execution time.
In Algorithm~\ref{alg:certificate1} we implement a heuristic (for choosing this order)
which is inspired by the strategy we adopted in the noiseless case \cite{Yasser_SMT}. 

Note that the reduced sets $\mathbf{s}'(b)$ are used only to generate the UNSAT certificates. Hence, it is direct to show that Algorithm~\ref{alg:smt} still preserves the optimality of the state estimate as stated by the following result.
\begin{theorem}
Let the linear dynamical system defined in~\eqref{eq:vector_system_model}
be $2k$-sparse observable system.
Consider a $k$-adversary satisfying Assumptions 1-5. Consider the state estimate $\hat{\x}_{\mathbf{s}^*}(t)$ computed by Algorithm~\ref{alg:smt}.
Then, for any $\epsilon > 0$ and $\delta > 0$,
there exists a large enough $N$ such that:
\begin{align}
\mathbb{P} & \left ( 
  tr \left(\E_{N,t_1} \left( \mathbf{e}_{\mathbf{s}^*}\mathbf{e}_{\mathbf{s}^*}^T \right) \right)
 \le 
 tr \left(  \mathbf{P}^\star_{\mathbf{s}_{\text{worst},p - k}}  \right)  + \epsilon 
\right )
\geq 1- \delta.
\end{align}
\end{theorem}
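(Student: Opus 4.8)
The plan is to reduce this statement to Theorem~\ref{thm:ach} (equivalently, to Lemma~\ref{lemma:detection_guarantee}) by showing that Algorithm~\ref{alg:smt} is a \emph{sound} pruning of the exhaustive search of Algorithm~\ref{alg:secure_state_estimation_vanilla}: it terminates and returns a sensor set $\mathbf{s}^{\ast}$ with $|\mathbf{s}^{\ast}| \ge p-k$ on which \textsc{Attack-Detect} outputs $\hat{d}_{\text{attack},\mathbf{s}^{\ast}}(t_1) = 0$, which is exactly the event under which Lemma~\ref{lemma:detection_guarantee} controls the estimation error. The only genuinely new ingredient is to check that the UNSAT certificates~\eqref{eq:trivial_cert}--\eqref{eq:conflict_cert} never discard a \emph{correct} hypothesis.

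First I would make precise what a certificate removes: a certificate $\phi_{\text{cert}}(j) ::= \sum_{i \in \mathbf{s}'(b)} b_i \ge 1$, generated from a reduced set $\mathbf{s}'(b)$ with $\hat{d}_{\text{attack},\mathbf{s}'(b)} = 1$, is violated by an assignment $b^{\ast}$ precisely when $\mathbf{s}'(b) \subseteq \mathbf{s}(b^{\ast}) := \{1,\ldots,p\} \setminus \supp(b^{\ast})$, i.e.\ when $b^{\ast}$ hypothesizes all of $\mathbf{s}'(b)$ to be attack-free. Now consider the ``true'' assignment $b^{\circ}$ with $\supp(b^{\circ}) = \pmb{\kappa}$; it satisfies $\phi(0)$ of~\eqref{eq:SMT_sum}, and its attack-free set $\mathbf{s}(b^{\circ})$ is the full set of $p-k$ uncorrupted sensors. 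A certificate can discard $b^{\circ}$ only if the set $\mathbf{s}'(b)$ it is built from is \emph{attack-free} yet produced $\hat{d}_{\text{attack},\mathbf{s}'(b)} = 1$. But for an attack-free $\mathbf{s}'$ the block-residue sample average $\E_{N,t_1}(\res_{\mathbf{s}'} \res_{\mathbf{s}'}^T)$ converges w.h.p.\ to its attack-free mean $\O_{\mathbf{s}'} \mathbf{P}^{\star}_{\mathbf{s}'} \O_{\mathbf{s}'}^T + \M_{\mathbf{s}'}$ by the law of large numbers---this is exactly step $(a)$ in the proof of Lemma~\ref{lemma:detection_guarantee}, and it needs only that the Kalman filter on $\mathbf{s}'$ has a steady state, which holds because the standing assumption $\mu_i = n$ makes $(\mathbf{A},\mathbf{C}_i)$, hence $(\mathbf{A},\mathbf{C}_{\mathbf{s}'})$ for any nonempty $\mathbf{s}'$, observable---so the residue test of Algorithm~\ref{alg:detector} holds and $\hat{d}_{\text{attack},\mathbf{s}'} = 0$ w.h.p. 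Since the number of distinct subsets ever passed to \textsc{Attack-Detect}, across all iterations and all certificate-compaction steps, is at most $2^p$ (a count independent of $N$), a union bound shows that for $N$ large, w.h.p.\ no attack-free set triggers a certificate, so $b^{\circ}$ survives every $\phi(j)$.

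Given that, $b^{\circ}$ stays feasible throughout, while each iteration of Algorithm~\ref{alg:smt} conjoins a certificate that excludes at least the current rejected assignment (indeed $\sum_{i\in\mathbf{s}'(b)}b_i=0$ for that $b$); the loop therefore cannot revisit an assignment and halts after finitely many steps, returning an assignment $b^{\ast}$ for which \textsc{Attack-Detect} on the \emph{full} set $\mathbf{s}^{\ast} = \mathbf{s}(b^{\ast})$ returns $0$, with $|\mathbf{s}^{\ast}| \ge p-k$ by~\eqref{eq:SMT_sum}. On this probability-$(\ge 1-\delta)$ event I would invoke Lemma~\ref{lemma:detection_guarantee} (valid since $|\mathbf{s}^{\ast}| \ge p-k$ and the system is $2k$-sparse observable) to get $tr\big(\E_{N,t_1}(\e_{\mathbf{s}^{\ast}} \e_{\mathbf{s}^{\ast}}^T) - \mathbf{P}^{\star}_{\mathbf{s}^{\ast}}\big) \le \epsilon$ w.h.p., and then use monotonicity of the steady-state Kalman error covariance in the sensor set: for any $\mathbf{s}'' \subseteq \mathbf{s}^{\ast}$ with $|\mathbf{s}''| = p-k$ we have $\mathbf{P}^{\star}_{\mathbf{s}^{\ast}} \preccurlyeq \mathbf{P}^{\star}_{\mathbf{s}''}$, hence $tr(\mathbf{P}^{\star}_{\mathbf{s}^{\ast}}) \le tr(\mathbf{P}^{\star}_{\mathbf{s}''}) \le tr(\mathbf{P}^{\star}_{\mathbf{s}_{\text{worst},p-k}})$ by the definition of $\mathbf{s}_{\text{worst},p-k}$. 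Chaining the two bounds and folding the finitely many failure events into $\delta$ by enlarging $N$ yields the claimed inequality.

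The hard part will be the union bound of the second paragraph: one must verify that the per-subset failure probability of \textsc{Attack-Detect} decays fast enough in $N$ to survive a bound over all $\le 2^p$ subsets---this is fine precisely because that count depends only on $p$, not on $N$, while the per-subset probability tends to $0$ as $N \to \infty$. A minor accompanying point is that whenever a reduced set $\mathbf{s}'(b)$ presented to Algorithm~\ref{alg:detector} has $|\mathbf{s}'(b)| > k$, one needs $\lambda_{\min,\mathbf{s}'(b)\setminus k} > 0$ for the threshold $\eta$ in that algorithm to be admissible, which is again guaranteed by $2k$-sparse observability. Everything else is a verbatim reuse of the analysis behind Lemma~\ref{lemma:detection_guarantee} and Theorem~\ref{thm:ach}.
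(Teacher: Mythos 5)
Your argument is correct and follows the same route as the paper: reduce to Lemma~\ref{lemma:detection_guarantee} (equivalently Theorem~\ref{thm:ach}) by showing the UNSAT certificates are sound, i.e.\ they never eliminate the true hypothesis, so Algorithm~\ref{alg:smt} terminates with a set $\mathbf{s}^*$ of cardinality at least $p-k$ that passes the residue test. The paper's own proof consists only of the one-line remark that the reduced sets $\mathbf{s}'(b)$ are used solely to generate certificates, so your writeup --- the observation that a certificate can exclude the true assignment only if an attack-free subset fails the test, the union bound over the at most $2^p$ subsets ever queried, the termination argument, and the covariance-monotonicity step handling $\vert\mathbf{s}^*\vert > p-k$ --- supplies exactly the details the paper declares ``direct to show.''
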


Note that although, for the sake of brevity, we did not analyze analytically the worst case execution time (in terms on number of iterations) of Algorithm~\ref{alg:smt}, we show numerical results in Section~\ref{sec:exp} that support the claim that the proposed SMT-like solver works much better in practice compared to the exhaustive search procedure (Algorithm~\ref{alg:secure_state_estimation_vanilla}).

\begin{algorithm}[t]
\caption{\textsc{SMT-based Search}}
\begin{algorithmic}[1]
\STATE status := \verb+UNSAT+;
\STATE $\phi_B :=  \sum_{i \in \{ 1,\hdots,p \}} b_i \le k $;
\WHILE{status == \texttt{UNSAT}}
	\STATE $b :=$ \textsc{SAT-Solve}$( \phi_B )$;
       \STATE $\s(b) := \{1,2,\ldots , p \} - \supp(b)$;
	\STATE  $ ( \hat{d}_{\text{attack}, \mathbf{s}(b)},  \{\hat{\x}_{\mathbf{s}(b)}(t)\}_{t \in G})$ \\  $:=\textsc{Attack-Detect}(\mathbf{s}(b), t_1)$;
	\IF {$\hat{d}_{\text{attack}, \mathbf{s}(b)}$ == 1}
		\STATE $\phi_{\text{cert}}$ \\ := \textsc{Generate-Certificate}$(\mathbf{s}(b), \{\hat{\x}_{\mathbf{s}(b)}(t) \}_{t \in G}  )$;
		\STATE $\phi_B := \phi_B \land \phi_{\text{cert}}$;
	\ENDIF
\ENDWHILE
\STATE $\mathbf{s}^* = \mathbf{s}(b)$;
\STATE \textbf{return} $ \{ \hat{\x}_{\mathbf{s}^*}(t) \}_{t \in G}$;
\end{algorithmic}
\label{alg:smt}
\end{algorithm}

\begin{algorithm}
\caption{\textsc{Generate-Certificate}$(\mathbf{s}, \{ \hat{\x}_{\mathbf{s}}(t) \}_{t \in G}  )$}
\begin{algorithmic}[1]
\STATE \textbf{Compute the residues for $i \in \s$} 
\STATE $\res_i(t) := \mathbf{y}_i(t) - \O_i \hat{\x}_{\mathbf{s}}(t)$, $\forall t \in G =\{t_1, \ldots , t_1 + N -1\}$ 
\STATE $\mu_i(t_1) := \left \vert tr \left (\E_{N,t_1} \left(  \res_i  \res_i^T \right)  -  \O_i  \mathbf{P}^\star_{\mathbf{s}} \O_i^T  -  \M_i \right)
- \eta n \right \vert $;
\STATE \textbf{Normalize the residues} 
\STATE $\mu_i(t_1) := \mu_i (t_1) / \lambda_{\max}\left ( \O_i^T \O_i \right)$, 
\STATE $\mathbf{\mu}(t_1) := \{ \mu_i (t_1) \}_{i \in \s}$ ;
\STATE \textbf{Sort the residues in ascending order} 
\STATE $\quad \mathbf{\mu}\_sorted(t_1) := \text{sortAscendingly}(\mathbf{\mu}(t_1))$;
\STATE \textbf{Choose sensor indices of $p-2k+1$ smallest residues} 
\STATE $\quad \mathbf{\mu}\_min\_r :=  \text{Index} \left ( \mu\_sorted [ 1 : p - 2k+1] \right )$
;
\STATE \textbf{Search linearly for the UNSAT certificate}
\STATE $ \text{status = \texttt{UNSAT}}; \; \text{counter} = 1; \; \phi_{\text{conf-cert}} = 1$;  \; $\s' = \s$
\WHILE{status == \texttt{UNSAT}}
	\STATE $\mathbf{s}' :=  \mathbf{s}' \setminus \mathbf{\mu}\_min\_r [\text{counter}]$;
	\STATE ($\hat{d}_{\text{attack}, \mathbf{s}'}, \{ \hat{\x}_{\mathbf{s}'}(t)\}_{t \in G}  $) := \textsc{Attack-Detect}$(\mathbf{s}', t_1)$;
	\IF{$\hat{d}_{\text{attack}, \mathbf{s}'}$ == 1}
		\STATE $\phi_{\text{conf-cert}}:= \phi_{\text{conf-cert}} \land \sum_{i \in \mathbf{s}'} b_i \ge 1$;
		\STATE counter := counter  + 1;	
	\ELSE
		\STATE status := \texttt{SAT};
	\ENDIF
\ENDWHILE
\STATE \textbf{return} $\phi_{\text{conf-cert}}$
\end{algorithmic}
\label{alg:certificate1}
\end{algorithm}

\section{Numerical Experiments}
\label{sec:exp}
In this section,
we report numerical results for
Algorithms~\ref{alg:secure_state_estimation_vanilla} and~\ref{alg:smt}
as described by the experiments below.
\subsection{Experiment 1: Residue test performance in Algorithm~\ref{alg:secure_state_estimation_vanilla}}
In this experiment,
we numerically check the performance of the residue test
involved in Algorithm~\ref{alg:secure_state_estimation_vanilla}
while checking for effective attacks across sensor subsets.
We generate a stable system randomly with $n=20$ (state dimension) and $p=5$ sensors. We select $k=2$ sensors at random,
and apply a random attack signal to the two sensors.
We apply Algorithm~\ref{alg:secure_state_estimation_vanilla} by running all the $\binom{5}{3} = 10$
Kalman filters (one for each distinct sensor subset of size $3$)
and do the residue test corresponding to each sensor subset.
Figure~\ref{fig:threshold}
shows the maximum entry in the
residue test matrix
$\mathbf{R}_{\s} = \E_{N,t_1} \left(  \res_{\mathbf{s}}  \res_{\mathbf{s}}^T \right)  - 
\left ( \O_{\mathbf{s}}  \mathbf{P}^\star_{\mathbf{s}} \O_{\mathbf{s}}^T   +
 \M_{\mathbf{s}} \right)$ for the $10$ different Kalman
filters.
It is apparent from Figure~\ref{fig:threshold}
that only one Kalman filter produces a state estimate that passes the residue test defined in Algorithm~\ref{alg:detector}.
This indeed corresponds to the set of attack-free sensors in the experiment.
\begin{figure}[!t]
\centering
{
\includegraphics[width=0.49 \textwidth]{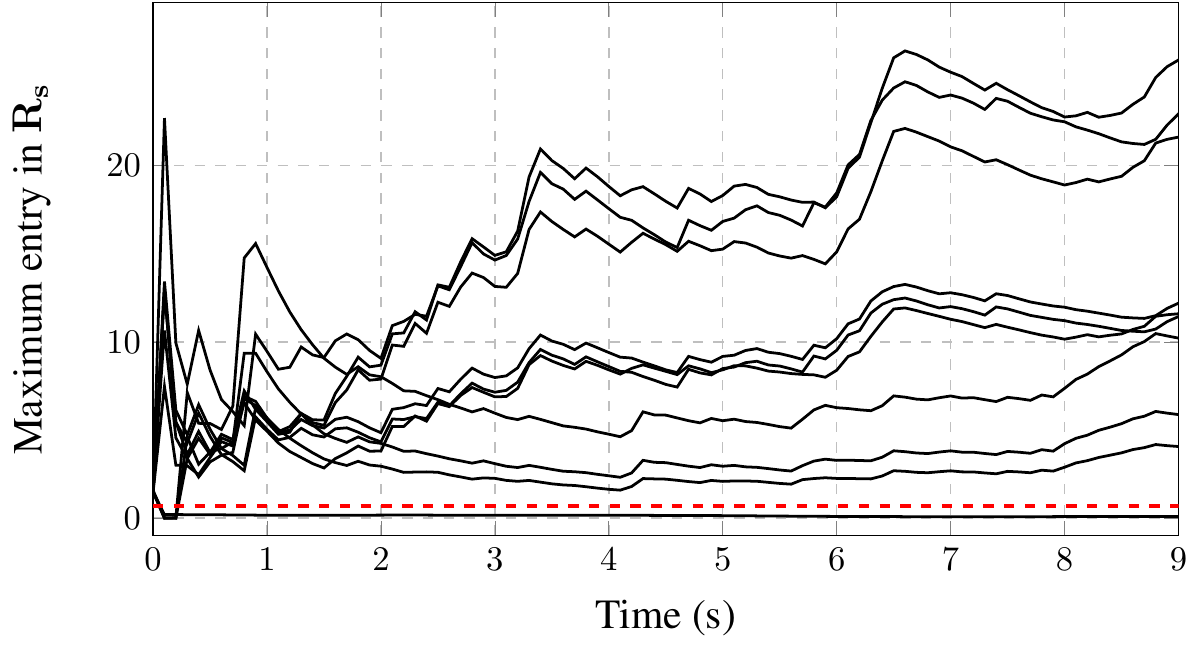}}
\caption{Figure showing the maximum entry in
the residue test matrix
$\mathbf{R}_{\s} = \E_{N,t_1} \left(  \res_{\mathbf{s}}  \res_{\mathbf{s}}^T \right)  - 
\left ( \O_{\mathbf{s}}  \mathbf{P}^\star_{\mathbf{s}} \O_{\mathbf{s}}^T   +
 \M_{\mathbf{s}} \right)
$
for the $10$
Kalman filters in Experiment $1$ versus the threshold $\eta=0.7$ (indicated by the dashed red line).
As shown in the figure,
there is only one subset of sensors which satisfies the threshold $\eta$,
and this corresponds to the attack-free set of sensors.}
\label{fig:threshold}
\end{figure}

\subsection{Experiment 2: Performance of SMT-based Search}
In this experiment, we compare the sensor subset search time for the SMT-based approach (Algorithm~\ref{alg:smt}) with that for the
exhaustive search approach (Algorithm~\ref{alg:secure_state_estimation_vanilla}).
For this experiment, we fix $n  = 50$ (state dimension) and
vary the number of sensors from $p = 3$ to $p = 15$.
For each system, we pick one third of the sensors to be under attack, \emph{i.e.},
$k = \lfloor p/3 \rfloor$.
The attack signal is chosen as a linear function of the measurement noise.
For each system, we run the bank of $\binom{p}{p - k}$ Kalman filters to generate the state estimates corresponding to all
sensor subsets of size $p-k$.
We then use both exhaustive search as well as the SMT-based search to find the
sensor subset
that satisfies the residue test
in Algorithm~\ref{alg:detector}.
Figure~\ref{fig:smt_time} shows the average time needed to perform the search across 50 runs of the same experiment.
Figure~\ref{fig:smt_time} suggests that the SMT-based search has an exponential improvement over exhaustive search
as the number of sensors increases.
In particular, for $p=15$, the SMT-based search out-performs
exhaustive search by an order of magnitude.

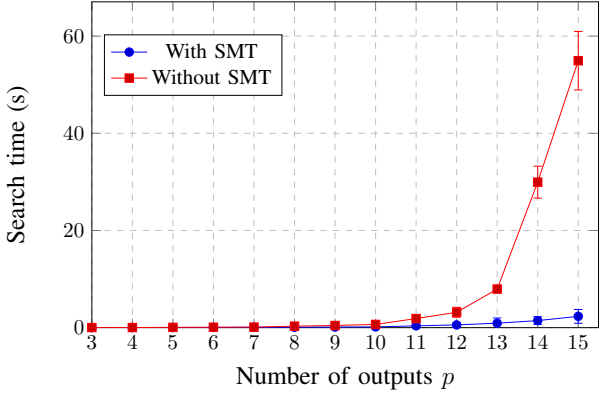
\begin{figure}[!t]
\centering
{
\begin{tabular}{c c} 
	\resizebox{0.45 \textwidth}{!}{
	\begin{tikzpicture}
		\begin{axis}[
		width = 10cm,
		height = 7cm,
	    	xlabel=\large{Number of outputs $p$},
	    	ylabel=\large{Search time (s)},
	    	xtick={3,4,5,6,7,8,9,10,11,12,13,14,15},
		xmin = 3,
		xmax = 15.5,
		ymin = 0,
	    	legend style={at={(0.2,0.9)},anchor=north,legend columns=1},
	    	ymajorgrids=true,
		xmajorgrids=true,
	    	grid style=dashed,
		]
		\addplot+[error bars/.cd,y dir=both,y explicit] 
			coordinates {	(3, 0.0219) +- (0, 0.0012)
						(4, 0.0274) +- (0, 0.0035)
						(5, 0.0434) +- (0, 0.00112)
						(6, 0.0493) +- (0, 0.0093)
						(7, 0.0748) +- (0, 0.017)
						(8, 0.0930) +- (0, 0.0574)
						(9,0.1278) +- (0, 0.088)
						(10, 0.1608) +- (0,0.0862) 
						(11, 0.3763) +- (0, 0.108)
						(12, 0.5658) +- (0, 0.427)
						(13, 0.9209) +- (0, 1.073)
						(14, 1.4554) +- (0, 0.8079)
						(15, 2.32837016) +- (0,1.42) 
			};
		\addplot+[error bars/.cd,y dir=both,y explicit] 
			coordinates {	
						(3, 0.0206) +- (0, 0.0015)
						(4, 0.0252) +- (0, 0.0055)
						(5, 0.0621) +- (0,0.0047)
						(6, 0.0893) +- (0, 0.0105)
						(7, 0.1148) +- (0, 0.0086)
						(8, 0.3065) +- (0, 0.0374)
						(9,0.4512) +- (0, 0.088)
						(10, 0.6726) +- (0,0.1543) 
						(11, 1.8763) +- (0, 0.0325)
						(12, 3.1658) +- (0, 1.0127)
						(13, 7.9419) +- (0, 0.4927)
						(14, 29.9454) +- (0, 3.2809)
						(15, 54.9367) +- (0,6.0236) 
			};
		\legend{With SMT, Without SMT}
		\end{axis}
	\end{tikzpicture} }
\end{tabular}
}	
\caption{Comparison of sensor subset search times for exhaustive search and SMT based search.}
\label{fig:smt_time}
\end{figure}

\section{Sparse observability: Coding theoretic view} \label{sec:sparse_proof}
In this section, we revisit the sparse observability condition
against a $k$-adversary and give a coding theoretic interpretation for the same.
We demonstrate how techniques developed for
error correction in classical coding theory \cite{blahut}
can be used for understanding the resilience
of dynamical systems against malicious
attacks\footnote{With a similar motivation,
in \cite{shaunak_CDC14} the authors use coding techniques to
enable secure state estimation in the presence of a corrupt observer with
unattacked sensor outputs.}.
We first describe our coding theoretic interpretation
for sensor attacks in a linear system,
and then discuss how it can be generalized for non-linear systems.

\par Consider the linear dynamical system in \eqref{eq:vector_system_model} without the process and sensor noise (\emph{i.e.}, $\mathbf{x}\left(t+1\right) = \mathbf{Ax}(t), \; \mathbf{y}(t) = \mathbf{C} \mathbf{x}(t)+ \mathbf{a}(t)$).
If the system's initial state is $\mathbf{x}(0) \in \mathbb{R}^n$ and the system is $\theta$-sparse observable, then clearly in the absence of sensor attacks, by observing the outputs from any $p-\theta$
sensors for $n$ time instants ($t =0,1,\ldots , n-1$) we can exactly recover $\mathbf{x}(0)$ and hence, \textit{exactly} estimate the state of the plant.
A coding theoretic view of this can be given as follows.
Consider the outputs from sensor $d \in \{1,2,\ldots, p\}$ for $n$ time instants as a symbol $\pazocal{Y}_d \in \mathbb{R}^n$.
Thus, in the (symbol) observation vector $\pazocal{Y}= \begin{bmatrix}\pazocal{Y}_1 & \pazocal{Y}_2  \ldots \pazocal{Y}_p \end{bmatrix}$,
due to $\theta$-sparse observability,
any $p-\theta$ symbols are sufficient (in the absence of attacks) to recover the initial state $\mathbf{x}(0)$.
Now, let us consider the case of a $k$-adversary which can arbitrarily corrupt any $k$ sensors.
In the coding theoretic view, this corresponds to arbitrarily corrupting any $k$ (out of $p$) symbols in the observation vector.
Intuitively,
based on the relationship between error correcting codes and the Hamming distance between codewords in classical coding theory \cite{blahut}, one can expect the recovery of the initial state despite such corruptions to depend on the (symbol) Hamming distance
between the observation vectors corresponding to two distinct initial states (say $\mathbf{x}^{(1)}(0)$ and $\mathbf{x}^{(2)}(0)$ with $\mathbf{x}^{(1)}(0) \neq \mathbf{x}^{(2)}(0)$).
In this context,
the following lemma relates $\theta$-sparse observability
to the minimum Hamming distance between observation vectors in the absence of attacks.
\begin{lemma} \label{lemma:distance}
For a $\theta$-sparse observable system, the minimum (symbol) Hamming distance between observation vectors corresponding to distinct initial states is $\theta+1$. 
\end{lemma}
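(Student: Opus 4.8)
The plan is to strip away the dynamics and reduce the claim to a purely linear-algebraic fact about the kernels of the per-sensor observability matrices. In the noiseless system $\mathbf{x}(t+1)=\mathbf{A}\mathbf{x}(t)$, $\mathbf{y}(t)=\mathbf{C}\mathbf{x}(t)$, stacking the outputs of sensor $d$ over the $n$ time instants $t=0,\dots,n-1$ gives exactly the symbol $\pazocal{Y}_d = \O_d\,\mathbf{x}(0)$, so the whole observation map $\mathbf{x}(0)\mapsto\pazocal{Y}$ is linear. Hence, for distinct initial states with difference $\mathbf{z}:=\mathbf{x}^{(1)}(0)-\mathbf{x}^{(2)}(0)\neq 0$, the $d$th symbols disagree precisely when $\O_d\mathbf{z}\neq 0$, i.e.\ when $\mathbf{z}\notin\ker\O_d$, so the minimum symbol Hamming distance is
\[
d_{\min}=\min_{\mathbf{z}\neq 0}\,\big|\{\,d\in\{1,\dots,p\}: \mathbf{z}\notin\ker\O_d\,\}\big|.
\]
I would also record the elementary facts (using that all observability indices are taken to be $n$) that $(\mathbf{A},\mathbf{C}_{\mathbf{s}})$ is observable iff $\ker\O_{\mathbf{s}}=\{\mathbf{0}\}$, and that $\ker\O_{\mathbf{s}}=\bigcap_{d\in\mathbf{s}}\ker\O_d$.

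For the lower bound $d_{\min}\ge\theta+1$, I would argue by contradiction: if some $\mathbf{z}\neq 0$ lies outside $\ker\O_d$ for at most $\theta$ indices $d$, then $\mathbf{z}\in\ker\O_d$ for at least $p-\theta$ sensors, so one may pick a set $\mathbf{s}$ with $|\mathbf{s}|=p-\theta$ contained in those; then $\mathbf{z}\in\bigcap_{d\in\mathbf{s}}\ker\O_d=\ker\O_{\mathbf{s}}$ is a nonzero kernel vector, contradicting observability of $(\mathbf{A},\mathbf{C}_{\mathbf{s}})$ guaranteed by $\theta$-sparse observability. Hence every nonzero $\mathbf{z}$ produces disagreement on at least $\theta+1$ symbols, i.e.\ the observation vectors of distinct initial states are at Hamming distance at least $\theta+1$.

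For tightness I would use that $\theta$ is the \emph{sparse observability index}, i.e.\ the system is not $(\theta+1)$-sparse observable (if it were, the bound above would instead give distance $\ge\theta+2$). Then there exists a set $\mathbf{s}_0$ with $|\mathbf{s}_0|=p-\theta-1$ for which $\ker\O_{\mathbf{s}_0}\neq\{\mathbf{0}\}$; picking a nonzero $\mathbf{z}_0\in\ker\O_{\mathbf{s}_0}$ and taking $\mathbf{x}^{(1)}(0)=\mathbf{z}_0$, $\mathbf{x}^{(2)}(0)=\mathbf{0}$, the two observation vectors agree on all $p-\theta-1$ sensors in $\mathbf{s}_0$, hence differ on at most $\theta+1$ symbols. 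Combined with the previous paragraph this yields $d_{\min}=\theta+1$, as claimed. I do not expect a genuine obstacle here: the only point requiring care is the convention in the hypothesis — the equality ``$\theta+1$'' holds precisely when $\theta$ is the largest integer for which $\theta$-sparse observability holds, which I would state explicitly up front — after which everything is the linear reduction above plus a pigeonhole argument over sensor subsets.
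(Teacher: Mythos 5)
Your proposal is correct and follows essentially the same route as the paper: the lower bound is the observation that agreement on any $p-\theta$ symbols would force the two initial states to coincide by observability of the corresponding subsystem (you phrase this via kernels of the stacked observability matrices, the paper via state recovery, but it is the same argument), and tightness in both cases comes from $\theta$ being the \emph{largest} integer for which sparse observability holds, which the paper relegates to a footnote and you rightly make explicit. The only difference is presentational --- your explicit linear reduction $\pazocal{Y}_d^{(1)}=\pazocal{Y}_d^{(2)} \Leftrightarrow \mathbf{x}^{(1)}(0)-\mathbf{x}^{(2)}(0)\in\ker\O_d$ makes the pigeonhole step slightly more rigorous than the paper's prose, but it is not a different proof.
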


\begin{proof}
Consider a system with $p$ sensors, and observation vectors $\pazocal{Y}^{(1)}$ and $\pazocal{Y}^{(2)}$ corresponding to distinct initial states $\mathbf{x}^{(1)}(0)$ and $\mathbf{x}^{(2)}(0)$. 
Due to $\theta$-sparse observability,
at most $p-\theta-1$ symbols in $\pazocal{Y}^{(1)}$ and $\pazocal{Y}^{(2)}$ can be identical;
if any $p-\theta$ of the symbols are identical, this would imply $\mathbf{x}^{(1)}(0) = \mathbf{x}^{(2)}(0)$.
Hence, the (symbol) Hamming distance between the observation vectors
$\pazocal{Y}^{(1)}$ and $\pazocal{Y}^{(2)}$
(corresponding to $\mathbf{x}^{(1)}(0)$ and $\mathbf{x}^{(2)}(0)$)
is at least $p-(p-\theta-1) = \theta +1$ symbols.
Also, there exists a pair of initial states $\left( \mathbf{x}^{(1)}(0), \mathbf{x}^{(2)}(0)\right)$,
such that the corresponding observation vectors $\pazocal{Y}^{(1)}$ and $\pazocal{Y}^{(2)}$ are identical in exactly $p-\theta-1$ symbols\footnote{If there is no such pair of initial states,
the initial state can be recovered by observing any $p-\theta-1$ sensors.
By definition, in a $\theta$-sparse observable system, $\theta$ is the largest positive integer, such that the initial state can be recovered by observing any $p-\theta$ sensors.}
and differ in the rest $\theta+1$ symbols.
Hence, the minimum (symbol) Hamming distance between the observation vectors is  $\theta+1$.
\end{proof}

\par
For a $\theta$-sparse observable system,
since the minimum Hamming distance between the observation vectors corresponding to
distinct initial states is $\theta+1$, we can:
\begin{itemize}
\item[(1)] correct up to $k < \frac{\theta+1}{2}$ sensor corruptions,
\item[(2)] detect up to $k \leq \theta $ sensor corruptions.
\end{itemize}
Note that
(1) above is equivalent to
$2k \leq \theta$ (sparse observability condition
for secure state estimation \cite{YasserETPGarXiv}).
It should be noted that a $k$-adversary can attack \textit{any} set of $k$ (out of $p$) sensors,
and the condition $k < \frac{\theta+1}{2}$ is both necessary and sufficient for exact state estimation despite such attacks.
When $k \geq \frac{\theta+1}{2}$,
it is straightforward to show a scenario where the observation vector (after attacks) can be explained by multiple initial states,
and hence exact state estimation is not possible.
The following example illustrates such an attack scenario.

\begin{example} \label{example:coding_interpretation}
Consider a $\theta$-sparse observable system with $\theta=3$, number of sensors $p=5$, and a $k$-adversary with $k=2$.
Clearly, the condition $k < \frac{\theta+1}{2}$ is not satisfied in this example.
Let $\mathbf{x}^{(1)}(0)$ and $\mathbf{x}^{(2)}(0)$ be distinct initial states,
such that the corresponding observation vectors $\pazocal{Y}^{(1)}$ and $\pazocal{Y}^{(2)}$ have (minimum) Hamming distance $\theta+1 = 4$ symbols.
Figure~\ref{fig:coding_example} depicts the observation vectors $\pazocal{Y}^{(1)}$ and $\pazocal{Y}^{(2)}$, and for the sake of this example,
we assume that the observation vectors have the same first symbol (i.e., $\pazocal{Y}^{(1)}_1 = \pazocal{Y}_1^{(2)} = \pazocal{Y}_1$) and differ in the rest $4$ symbols
(hence, a Hamming distance of $4$).
\begin{figure}[!ht]
\begin{center}
\includegraphics[scale=0.5]{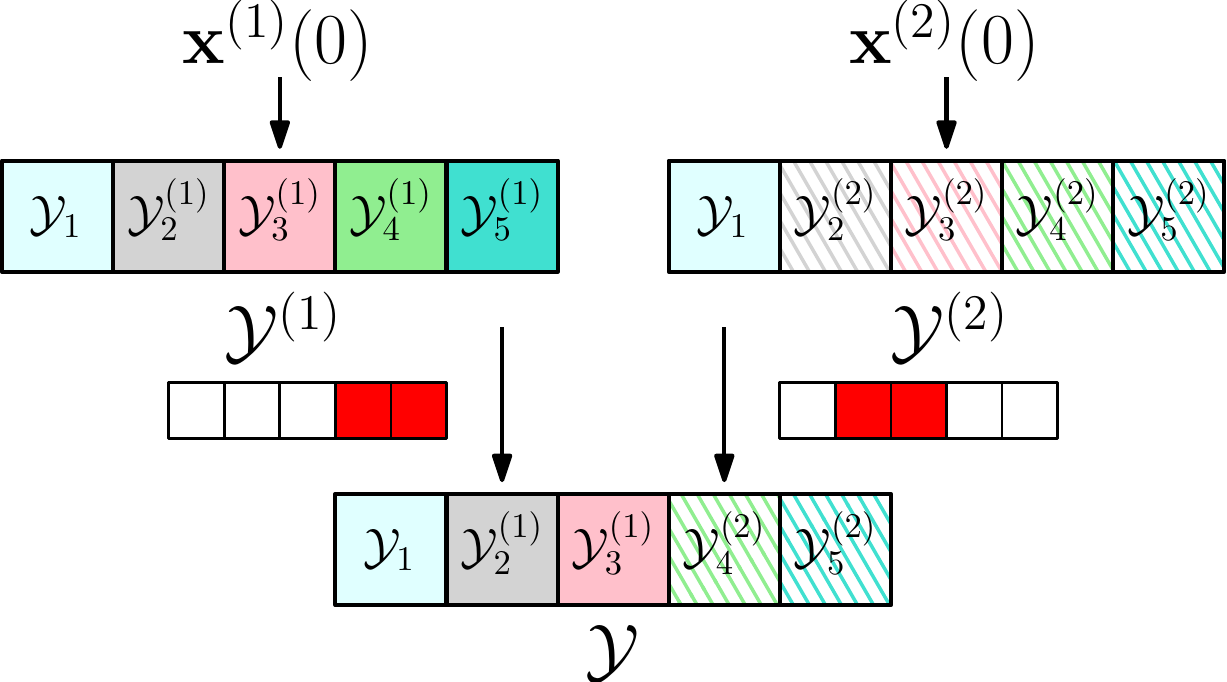}
\caption{Example with $\theta = 3$, $p=5$ and $k=2$.
For distinct initial states $\mathbf{x}^{(1)}(0)$ and $\mathbf{x}^{(2)}(0)$, the corresponding observation vectors are $\pazocal{Y}^{(1)}$ and $\pazocal{Y}^{(2)}$.
Given (attacked) observation vector $\pazocal{Y} = \left [ \pazocal{Y}_1 \; \;  \pazocal{Y}_2^{(1)} \; \; \pazocal{Y}_3^{(1)} \; \;  \pazocal{Y}_4^{(2)} \; \; \pazocal{Y}_5^{(2)} \right]$, there are two possibilities for the initial state: (a) $\mathbf{x}^{(1)}(0)$ with attacks on sensors
$4$ and $5$, or (b) $\mathbf{x}^{(2)}(0)$ with attacks on sensors
$2$ and $3$.}
\label{fig:coding_example}
\end{center}
\end{figure}
Now, as shown in Figure~\ref{fig:coding_example}, suppose the observation vector after attacks was
$\pazocal{Y} = \left [ \pazocal{Y}_1 \; \;  \pazocal{Y}_2^{(1)} \; \; \pazocal{Y}_3^{(1)} \; \;  \pazocal{Y}_4^{(2)} \; \; \pazocal{Y}_5^{(2)} \right]$.
Clearly, there are two possible explanations for this (attacked) observation vector: (a) the initial state was $\mathbf{x}^{(1)}(0)$ and sensors $4$ and $5$ were attacked,
or (b) the initial state was $\mathbf{x}^{(2)}(0)$ and sensors $2$ and $3$ were attacked. Since there are two possibilities,
we cannot estimate the initial state exactly given the attacked observation vector.
This example can be easily generalized to show the necessity of the condition $k < \frac{\theta+1}{2}$.
\end{example}

\par For (noiseless) non-linear systems, by analogously defining $\theta$-sparse observability, the same coding theoretic interpretation holds.
This leads to the necessary and sufficient conditions for attack detection and secure state estimation in any noiseless dynamical system with sensor attacks.

\bibliographystyle{IEEEtran}
\bibliography{bibliography2} 

\appendix
\subsection{Proof details for Theorem~\ref{th:detection}}

\subsubsection{Proof of \eqref{eq:LLN_z} using LLN} \label{sec:appendix_LLN_proof}
{\allowdisplaybreaks[4]
\begin{align}
& tr \left( \mathbf{M}_{\mathbf{s}_g} \right) -   tr \left ( 
\E_{N,t_1}    \left (  \mathbf{z}_{\mathbf{s}_g} \mathbf{z}^T_{\mathbf{s}_g}\right) 
\right)
 \nonumber \\
&= \frac{1}{n}  \sum_{l=0}^{n-1}    
tr \left( \mathbf{M}_{\mathbf{s}_g}  \right) -  
\frac{1}{N}  \sum_{t \in G}   tr \left (   \mathbf{z}_{\mathbf{s}_g}(t)  
 \mathbf{z}^T_{\mathbf{s}_g}(t)    \right) 
  \nonumber \\
& \stackrel{(a)}=   \sum_{l=0}^{n-1}   \frac{1}{n} \left (
tr \left( \mathbf{M}_{\mathbf{s}_g} \right) -  
\frac{1}{N_B}  \sum_{t \in G_l}   
tr \left (   \mathbf{z}_{\mathbf{s}_g}(t)
    \mathbf{z}^T_{\mathbf{s}_g}(t)    \right) 
 \right)  \nonumber \\
& \leq   \sum_{l=0}^{n -1 }  \frac{1}{n }  \left | 
tr \left( \mathbf{M}_{\mathbf{s}_g} \right) -  
\frac{1}{N_B}  \sum_{t \in G_l}   tr \left (    \mathbf{z}_{\mathbf{s}_g} (t)  
\mathbf{z}^T_{\mathbf{s}_g} (t)    \right)  
 \right|   \nonumber \\
& \stackrel{(b)}\leq  \epsilon_1,   \nonumber
\end{align}}
where (a) follows from partitioning time window $G$ (of size $N$)
into $n$ groups $G_0, G_1, \ldots G_{n-1}$ (each of size $N_B$)
such that $G_l = \{ t |  \left( \left( t-t_1 \right) \; mod \; n \right) = l \}$,
and (b) follows w.h.p. from LLN (for different time indices in $G_l$, $ tr \left (  \mathbf{z}_{\mathbf{s}_g}(t)   \mathbf{z}^T_{\mathbf{s}_g}(t) \right)$
corresponds to i.i.d. realizations of the same random variable).
\subsubsection{Cross term analysis and proof of \eqref{eq:3epsilon_1_bound}} \label{sec:appendix_cross_term_analysis}
The cross term $ 2 \E_{N,t_1} \left(  \mathbf{z}^T_{\mathbf{s}_g}  \O_{\mathbf{s}_g}  \mathbf{e}_{\s} \right) $ can be written down as a sum of $n$ terms as shown below:
{\allowdisplaybreaks[4]\begin{align}
2 \E_{N,t_1} \left(  \mathbf{z}^T_{\mathbf{s}_g} 
 \O_{\mathbf{s}_g}  \mathbf{e}_{\s} \right) 
&\stackrel{(a)} = \frac{2}{n} \sum_{l=0}^{n -1 } 
\left( \frac{1}{N_B} \sum_{t \in G_l}  \mathbf{z}^T_{\mathbf{s}_g}
(t)   \O_{\mathbf{s}_g}  \mathbf{e}_{\s}(t)   \right)  \nonumber\\
&= \frac{2}{n} \sum_{l=0}^{n - 1}  \zeta_l, 
\end{align}}
where (a) follows from partitioning time window $G$ (of size $N$)
into $n$ groups $G_0, G_1, \ldots G_{n-1}$ (each of size $N_B$)
such that $G_l = \{ t |  \left( \left( t-t_1 \right) \; mod \; n \right) = l \}$.
Now, we will show that each $ \zeta_l$ has zero mean and vanishingly small variance for large enough $N$.
The mean analysis can be done as shown below:
{\allowdisplaybreaks[4]\begin{align}
\mathbb{E} \left (  \zeta_l \right) 
 & = \mathbb{E} \left (   \frac{1}{N_B} \sum_{t \in G_l} \mathbf{z}^T_{\mathbf{s}_g}(t)   \O_{\mathbf{s}_g}  \mathbf{e}_{\s}(t)  \right)  \nonumber \\
 & \stackrel{(a)}=  \frac{1}{N_B} \sum_{t \in G_l}  
\mathbb{E} \left ( \mathbf{z}^T_{\mathbf{s}_g}(t)   \right) 
\mathbb{E}   \left (  \O_{\mathbf{s}_g}  \mathbf{e}_{\s}(t)  \right)  = 0 \label{eq:zero_mean_vector_prediction_fast},
\end{align}}
where (a) follows from the independence of $\mathbf{e}_{\s}(t)$ from $\mathbf{z}^T_{\mathbf{s}_g}(t) $ (due to assumptions \ref{ass:A1} and \ref{ass:A2}).
This implies that the cross term
$ 2 \E_{N,t_1} \left(  \mathbf{z}^T_{\mathbf{s}_g}  \O_{\mathbf{s}_g}  \mathbf{e}_{\s} \right) $
has zero mean.
As a consequence of \eqref{eq:zero_mean_vector_prediction_fast} and \eqref{eq:LLN_z},
{\allowdisplaybreaks[4]\begin{align}
 2 \epsilon_1 
& \geq 
 \mathbb{E} \left(  
 \E_{N,t_1}  \left ( tr  \left(  
 \O_{\mathbf{s}_g}   \left( \mathbf{e}_{\s}  \mathbf{e}_{\s}^T  - \mathbf{P}^\star_{\mathbf{s}}  \right) 
\O^T_{\mathbf{s}_g}  
\right )
\right)  
\right) 
 \nonumber \\
& =
\mathbb{E} \left(  
 \E_{N,t_1}  \left ( tr  \left(   \left( \mathbf{e}_{\s}  \mathbf{e}_{\s}^T  - \mathbf{P}^\star_{\mathbf{s}} \right) 
\O^T_{\mathbf{s}_g}  \O_{\mathbf{s}_g} \right) \right ) \right) 
\nonumber\\
& \stackrel{(a)} \geq 
\lambda_{min} \left(  \O^T_{\mathbf{s}_g} \O_{\mathbf{s}_g}  \right)
\E  \left(  \E_{N,t_1} \left(     tr   \left( \mathbf{e}_{\s}  \mathbf{e}_{\s}^T  - \mathbf{P}^\star_{\mathbf{s}} \right)  \right ) \right),
\label{eq:expectation_bound_general_case_prediction_fast_0}
\end{align}}
where (a) follows from Lemma~\ref{lemma:eigen_bound} (discussed in Appendix~\ref{sec:trace_ineq}).
Using \eqref{eq:expectation_bound_general_case_prediction_fast_0},
\begin{align}
\E \left ( 
\E_{N, t_1} 
 \left(     tr   \left(  \mathbf{e}_{\s}  \mathbf{e}_{\s}^T \right)  \right)
\right)
& \leq  \frac{2 \epsilon_1}{\lambda_{min} \left(  \O^T_{\mathbf{s}_g} \O_{\mathbf{s}_g}  \right)}  
+ tr  \left( \mathbf{P}^\star_{\mathbf{s}} \right) \label{eq:expectation_bound_general_case_prediction_fast}.
\end{align}
We will use the intermediate result
\eqref{eq:expectation_bound_general_case_prediction_fast} in the variance analysis
of $ \displaystyle{ \zeta_l =  \frac{1}{N_B} \sum_{t \in G_l} \mathbf{z}^T_{\mathbf{s}_g}(t)   \O_{\mathbf{s}_g}  \mathbf{e}_{\s}(t)   = 
 \frac{1}{N_B} \sum_{t \in G_l} \mathbf{e}_{\s}^T(t)  \O^T_{\mathbf{s}_g}   
\mathbf{z}_{\mathbf{s}_g}(t)}$ described below.
\par For any $\epsilon_2 > 0$, there exists a large enough $N_B$ such that:
{\allowdisplaybreaks[4]
\begin{align}
& Var \left( \frac{1}{N_B} \sum_{t \in G_l} \mathbf{e}_{\s}^T(t) \O^T_{\mathbf{s}_g}  \mathbf{z}_{\mathbf{s}_g}(t) \right) \nonumber \\
& = \mathbb{E} \left ( \left( \frac{1}{N_B} \sum_{t \in G_l}
 \mathbf{e}_{\s}^T(t) \O^T_{\mathbf{s}_g}  \mathbf{z}_{\mathbf{s}_g}(t)
 \right)^2 \right)  \nonumber \\
& \quad  
- \left (\mathbb{E}\left( \frac{1}{N_B} \sum_{t \in G_l} \mathbf{e}_{\s}^T(t) \O^T_{\mathbf{s}_g} \mathbf{z}_{\mathbf{s}_g}(t) \right) \right)^2 \nonumber \\
& \stackrel{(a)}= \mathbb{E} \left ( 
\left( \frac{1}{N_B} \sum_{t \in G_l} \mathbf{e}_{\s}^T(t) \O^T_{\mathbf{s}_g} 
\mathbf{z}_{\mathbf{s}_g}(t)
\right)^2 \right)  \nonumber \\
& = \mathbb{E}\left( \frac{1}{N^2_B} \sum_{t \in G_l} \mathbf{e}_{\s}^T(t) \O^T_{\mathbf{s}_g} \mathbf{z}_{\mathbf{s}_g}(t)
 \mathbf{e}_{\s}^T(t) \O^T_{\mathbf{s}_g} \mathbf{z}_{\mathbf{s}_g}(t) \right) 
 \nonumber \\ & \quad
 + \mathbb{E}\left( \frac{2}{N^2_B} \sum_{t,t' \in G_l, \; t < t'} \mathbf{e}_{\s}^T(t) \O^T_{\mathbf{s}_g} \mathbf{z}_{\mathbf{s}_g}(t)
 \mathbf{e}_{\s}^T(t') \O^T_{\mathbf{s}_g} \mathbf{z}_{\mathbf{s}_g}(t')
   \right) \nonumber\\
& \stackrel{(b)} = \mathbb{E}\left( \frac{1}{N^2_B} \sum_{t \in G_l} \mathbf{e}_{\s}^T(t) \O^T_{\mathbf{s}_g} \mathbf{z}_{\mathbf{s}_g}(t)
 \mathbf{e}_{\s}^T(t) 
\O^T_{\mathbf{s}_g} \mathbf{z}_{\mathbf{s}_g}(t) \right) 
\nonumber \\ & \;\;
+
 \frac{2}{N^2_B} \sum_{t,t' \in G_l, \; t < t'} \mathbb{E}\left( \mathbf{e}_{\s}^T(t) \O^T_{\mathbf{s}_g} \mathbf{z}_{\mathbf{s}_g}(t)
 \mathbf{e}_{\s}^T(t') \O^T_{\mathbf{s}_g} \right)
\mathbb{E}\left( \mathbf{z}_{\mathbf{s}_g}(t')  \right) \nonumber\\
& \stackrel{(c)}= \mathbb{E}\left( \frac{1}{N^2_B} \sum_{t \in G_l} \mathbf{e}_{\s}^T(t) \O^T_{\mathbf{s}_g} \mathbf{z}_{\mathbf{s}_g}(t) \mathbf{e}_{\s}^T(t) \O^T_{\mathbf{s}_g} \mathbf{z}_{\mathbf{s}_g}(t)  \right) \nonumber\\
& = \mathbb{E}\left( \frac{1}{N^2_B} \sum_{t \in G_l} \mathbf{e}_{\s}^T(t) 
\O^T_{\mathbf{s}_g} \mathbf{z}_{\mathbf{s}_g}(t)
 \left( \O^T_{\mathbf{s}_g} \mathbf{z}_{\mathbf{s}_g}(t) 
\right)^T \mathbf{e}_{\s}(t) \right) \nonumber\\
& \stackrel{(d)}= \mathbb{E} \left( \frac{1}{N^2_B} \sum_{t \in G_l} tr \left( \mathbf{e}_{\s}^T(t) \O^T_{\mathbf{s}_g} 
\mathbf{z}_{\mathbf{s}_g}(t) 
\left( \O^T_{\mathbf{s}_g} 
\mathbf{z}_{\mathbf{s}_g}(t) \right)^T \mathbf{e}_{\s}(t) \right) \right) \nonumber\\
& = \mathbb{E} \left( \frac{1}{N^2_B} \sum_{t \in G_l} tr \left( \O^T_{\mathbf{s}_g} \mathbf{z}_{\mathbf{s}_g}(t) 
 \left( \O^T_{\mathbf{s}_g} \mathbf{z}_{\mathbf{s}_g}(t) 
 \right)^T \mathbf{e}_{\s}(t) \mathbf{e}_{\s}^T(t) \right) \right) \nonumber\\
& \stackrel{(e)}= \frac{ 1}{N^2_B}\sum_{t \in G_l} tr \left( \mathbb{E} \left( \O^T_{\mathbf{s}_g}
\mathbf{z}_{\mathbf{s}_g}(t)  \mathbf{z}^T_{\mathbf{s}_g}(t) \O_{\mathbf{s}_g}  \right)
\mathbb{E}
\left(\mathbf{e}_{\s}(t) \mathbf{e}_{\s}^T(t) \right) \right) \nonumber\\
& \stackrel{(f)}\leq \frac{1}{N^2_B} \sum_{t \in G_l} \lambda_{max} \left( \O^T_{\mathbf{s}_g} \mathbf{M}_{\mathbf{s}_g} \O_{\mathbf{s}_g} \right)
 tr \left( \mathbb{E}
\left(\mathbf{e}_{\s}(t) \mathbf{e}_{\s}^T(t) \right) \right) \nonumber\\
& = \frac{ \lambda_{max} \left( \O^T_{\mathbf{s}_g} \mathbf{M}_{\mathbf{s}_g} \O_{\mathbf{s}_g} \right) }{N_B} \mathbb{E}
\left( \frac{1}{N_B} \sum_{t \in G_l} tr \left( \mathbf{e}_{\s}(t) \mathbf{e}_{\s}^T(t) \right) \right) \nonumber\\
& = \frac{ \lambda_{max} \left( \O^T_{\mathbf{s}_g} \mathbf{M}_{\mathbf{s}_g} \O_{\mathbf{s}_g} \right) }{N_B} \mathbb{E}
\left( \frac{1}{N_B} \sum_{t \in G_l} \mathbf{e}_{\s}^T(t) \mathbf{e}_{\s}(t) \right) \nonumber\\
& = \frac{ n  \lambda_{max} \left( \O^T_{\mathbf{s}_g} \mathbf{M}_{\mathbf{s}_g} \O_{\mathbf{s}_g} \right) }{N_B} \mathbb{E}
\left( \frac{1}{N} \sum_{t \in G_l} \mathbf{e_{\s}}^T(t) \mathbf{e}_{\s}(t) \right) \nonumber\\
& \leq \frac{ n \lambda_{max} \left( \O^T_{\mathbf{s}_g} \mathbf{M}_{\mathbf{s}_g} \O_{\mathbf{s}_g} \right) 
\E \left(  \frac{1}{N} \sum_{t \in G} \mathbf{e}_{\s}^T(t) \mathbf{e}_{\s}(t) \right) }
{N_B} \nonumber\\
& \stackrel{(g)} \leq \epsilon_2,
\end{align}}
where (a) follows from \eqref{eq:zero_mean_vector_prediction_fast},
(b) follows from the independence of $ \mathbf{z}_{\mathbf{s}_g}(t')$
from $ \mathbf{e}_{\s}^T(t) \O^T_{\mathbf{s}_g} \mathbf{z}_{\mathbf{s}_g}(t)
\mathbf{e}_{\s}^T(t') \O^T_{\mathbf{s}_g}$ for $t'>t$,
(c) follows from $\mathbb{E}\left( \mathbf{z}_{\mathbf{s}_g}(t') \right) =\mathbf{0}$,
(d) follows from $\mathbf{e}_{\s}^T(t) \O^T_{\mathbf{s}_g} \mathbf{z}_{\mathbf{s}_g}(t)$ being a scalar,
(e) follows from the independence of $ \mathbf{z}_{\mathbf{s}_g}(t)$ from $\mathbf{e}_{\s}(t)$,
(f) follows from Lemma~\ref{lemma:eigen_bound} (discussed in Appendix~\ref{sec:trace_ineq}) with,
\begin{align}
&\lambda_{max} \left( \mathbb{E} \left( \O^T_{\mathbf{s}_g} \mathbf{z}_{\mathbf{s}_g}(t)  \left( \O^T_{\mathbf{s}_g} \mathbf{z}_{\mathbf{s}_g}(t) \right)^T \right) \right) 
\nonumber\\
& \quad \quad = \lambda_{max} \left( \O^T_{\mathbf{s}_g} \mathbf{M}_{\mathbf{s}_g} \O_{\mathbf{s}_g} \right).
\end{align}
Finally, (g) follows from \eqref{eq:expectation_bound_general_case_prediction_fast} for large enough $N_B$.
This completes the variance analysis of $\zeta_l$, and clearly $\zeta_l 
$
has vanishingly small variance as $N_B \rightarrow \infty$.
As a consequence, the variance of
the cross term
$\displaystyle{ 2 \E_{N,t_1} \left(  \mathbf{z}^T_{\mathbf{s}_g}  \O_{\mathbf{s}_g}  \mathbf{e}_{\s} \right) 
= \frac{2}{n} \sum_{l=0}^{n - 1}  \zeta_l}$
is also vanishingly small for $N_B \rightarrow \infty$
(follows from the Cauchy-Schwarz inequality).
Since the cross term
$2 \E_{N,t_1} \left(  \mathbf{z}^T_{\mathbf{s}_g}  \O_{\mathbf{s}_g}  \mathbf{e}_{\s} \right)$
has zero mean and vanishingly small variance, by the Chebyshev inequality,
$ \left| 2 \E_{N,t_1} \left(  \mathbf{z}^T_{\mathbf{s}_g}  \O_{\mathbf{s}_g}  \mathbf{e}_{\s} \right)  \right| \leq \epsilon_1$ holds w.h.p., and this completes
the proof of \eqref{eq:3epsilon_1_bound}.
\subsection{Bounds on the trace of product of symmetric matrices} \label{sec:trace_ineq}
A useful lemma from \cite{trace_ineq_wang} can be described as follows.
\begin{lemma} \label{lemma:eigen_bound}
If $\mathbf{A}$ and $\mathbf{B}$ are two symmetric matrices in $\mathbb{R}^{n \times n }$, and $\mathbf{B}$ is positive semi-definite:
\begin{align}
\lambda_{min} \left ( \mathbf{A} \right) tr \left ( \mathbf{B} \right) \leq tr \left ( \mathbf{A} \mathbf{B} \right) \leq \lambda_{max} \left ( \mathbf{A} \right) tr \left ( \mathbf{B} \right).
\end{align}
\end{lemma}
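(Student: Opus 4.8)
The plan is to reduce this matrix trace inequality to a pair of scalar Rayleigh-quotient bounds via the spectral theorem applied to $\mathbf{B}$. First I would write the eigendecomposition $\mathbf{B} = \sum_{i=1}^{n} \mu_i \mathbf{q}_i \mathbf{q}_i^T$, where $\mu_1,\ldots,\mu_n$ are the eigenvalues of $\mathbf{B}$ and $\{\mathbf{q}_1,\ldots,\mathbf{q}_n\}$ is a corresponding orthonormal eigenbasis; since $\mathbf{B}$ is positive semi-definite, every $\mu_i \ge 0$. Linearity and the cyclic property of the trace then give
\begin{align}
tr\left(\mathbf{A}\mathbf{B}\right) = \sum_{i=1}^{n} \mu_i \, tr\!\left( \mathbf{A} \mathbf{q}_i \mathbf{q}_i^T \right) = \sum_{i=1}^{n} \mu_i \, \mathbf{q}_i^T \mathbf{A} \mathbf{q}_i . \nonumber
\end{align}

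The second step is to bound each term $\mathbf{q}_i^T \mathbf{A}\mathbf{q}_i$. Since $\mathbf{A}$ is symmetric and each $\mathbf{q}_i$ is a unit vector, the Rayleigh--Ritz (min--max) characterization of eigenvalues yields $\lambda_{min}(\mathbf{A}) \le \mathbf{q}_i^T \mathbf{A}\mathbf{q}_i \le \lambda_{max}(\mathbf{A})$ for each $i$. Multiplying through by the nonnegative weights $\mu_i$ --- this is precisely where positive semi-definiteness of $\mathbf{B}$ enters, ensuring the inequalities are not reversed --- and summing over $i$, I obtain $\lambda_{min}(\mathbf{A}) \sum_i \mu_i \le tr(\mathbf{A}\mathbf{B}) \le \lambda_{max}(\mathbf{A}) \sum_i \mu_i$. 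Since $\sum_i \mu_i = tr(\mathbf{B})$, this is exactly the claimed two-sided bound.

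An alternative route, avoiding explicit eigenvectors, is to factor $\mathbf{B} = \mathbf{B}^{1/2}\mathbf{B}^{1/2}$ with $\mathbf{B}^{1/2} \in \S_+^n$, use cyclicity to write $tr(\mathbf{A}\mathbf{B}) = tr(\mathbf{B}^{1/2}\mathbf{A}\mathbf{B}^{1/2})$, and note that $\mathbf{A} - \lambda_{min}(\mathbf{A})\mathbf{I} \succeq 0$ and $\lambda_{max}(\mathbf{A})\mathbf{I} - \mathbf{A} \succeq 0$ remain positive semi-definite after congruence by $\mathbf{B}^{1/2}$; taking traces of these PSD matrices and using $tr(\mathbf{B}^{1/2}\mathbf{B}^{1/2}) = tr(\mathbf{B})$ reproduces both bounds. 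I do not anticipate a genuine obstacle here: the statement is a standard linear-algebra fact (it is quoted from \cite{trace_ineq_wang}), and the only point requiring care is the semi-definiteness hypothesis on $\mathbf{B}$, which is what prevents the chain of inequalities from flipping when the weights $\mu_i$ are applied.
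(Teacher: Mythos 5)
Your proof is correct. Note, however, that the paper itself does not prove this lemma at all: it simply quotes the inequality from \cite{trace_ineq_wang}, so there is no in-paper argument to compare against. Your first argument --- diagonalize $\mathbf{B} = \sum_i \mu_i \mathbf{q}_i\mathbf{q}_i^T$, use cyclicity of the trace to write $tr(\mathbf{A}\mathbf{B}) = \sum_i \mu_i\, \mathbf{q}_i^T\mathbf{A}\mathbf{q}_i$, and sandwich each Rayleigh quotient between $\lambda_{min}(\mathbf{A})$ and $\lambda_{max}(\mathbf{A})$ --- is the standard self-contained proof, and you correctly isolate the one place where positive semi-definiteness of $\mathbf{B}$ is indispensable (the weights $\mu_i \ge 0$ preserve the direction of the inequalities; symmetry of $\mathbf{A}$ is what justifies the Rayleigh--Ritz bounds). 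Your alternative via $\mathbf{B}^{1/2}$ and congruence of the PSD matrices $\mathbf{A} - \lambda_{min}(\mathbf{A})\mathbf{I}$ and $\lambda_{max}(\mathbf{A})\mathbf{I} - \mathbf{A}$ is equally valid and arguably cleaner, since it never invokes an eigenbasis of $\mathbf{B}$. Either argument fully supports the uses of the lemma in the paper's appendices (steps $(c)$ and $(f)$ of the detection proofs), where $\mathbf{A} = \O_{\mathbf{s}_g}^T\O_{\mathbf{s}_g}$ or $\O_{\mathbf{s}_g}^T\mathbf{M}_{\mathbf{s}_g}\O_{\mathbf{s}_g}$ is symmetric and the second factor is an (empirical or exact) covariance, hence PSD.
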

\subsection{Results for the filtering version of the Kalman filter} \label{sec:filtering}
As stated in Section~\ref{sec:setup},
we use the prediction version of the Kalman filter for deriving our main results
in this paper.
Proving similar results for the filtering version can be done using the same techniques used for the prediction version.
In the remainder of this Section, we will first describe the
filtering setup with some additional notation,
and then describe our effective attack detector for the filtering setup.

\subsubsection{Filtering setup and additional notation} 
The state estimate update rule for the
filtering version of the Kalman filter (in steady state)
is as shown below \cite{kailath_book}:
\begin{align}
\hat{\mathbf{x}}(t) & = \hat{\mathbf{x}}^{(P)}(t) + \mathbf{L} \left ( \mathbf{y}(t) - \mathbf{C} \hat{\mathbf{x}}^{(P)}(t) \right), 
 \\
\hat{\mathbf{x}}^{(P)}(t+1) & = \mathbf{A} \hat{\mathbf{x}}(t) ,\label{eq:filtering_update}
\end{align}
where $\mathbf{L}$ is the steady state Kalman filter gain,
and $\hat{\mathbf{x}}(t)$ is the (filtered) state estimate at time $t$
(which also depends on the output at time $t$).
We denote by $\mathbf{L}_{\s}$ the steady state Kalman filter gain
when only outputs from sensor subset $\s \subseteq \{1,2,\ldots, p\}$ are used,
and use
$\mathbf{F}^\star_{\s}$ for
the corresponding filtering error covariance matrix.
In addition, we use the following notation for the senor noise in subset
$\s = \{i_1, i_2, \ldots , i_{|\s|}\}$
at time $t$:
\begin{align} 
\tilde{\mathbf{v}}_{\s}(t) = \begin{bmatrix}
v_{i_1}(t) \\ v_{i_2}(t) \\ \vdots \\ v_{i_{|\s|}}(t)
\end{bmatrix},
\label{eq:filtering_noise_notation}
\end{align}
and define $\Delta_{\s}$ as shown below:
\begin{align}
\Delta_{\s} =\E \left( \mathbf{z}_{\mathbf{s}}(t) 
\tilde{\mathbf{v}}^T_{\s}(t)  \mathbf{L}^T_{\s} 
\O^T_{\mathbf{s}} \right) , \label{eq:define_Delta}
\end{align}
where $\mathbf{z}_{\mathbf{s}}(t) = \J_{\mathbf{s}}  \bar{\mathbf{w}}(t) + \bar{\mathbf{v}}_{\mathbf{s}}(t)$ (as defined for the prediction setup in Section~\ref{sec:detector}).
Note that $\Delta_{\s}$ can be easily expressed
(after evaluating the expectation in \eqref{eq:define_Delta})
in terms of $\sigma^2_v$, $\mathbf{L}^T_{\s}$, and
$\O^T_{\mathbf{s}}$; we define
$\Delta_{\s}$ just for conveniently describing our detector
and proving its performance guarantees.
In the prediction setup, we limited the the adversary through Assumptions $1$-$5$;
however, in the filtering setup, to show similar results,
we require stronger versions of Assumptions \ref{ass:A1} and
\ref{ass:A2}
as described below:
\begin{assumption}
The adversary's knowledge at time $t$ is statistically independent of $\mathbf{w}(t')$ for $t' \geq t$, i.e., $\mathbf{a}(t)$ is statistically independent of $\{\mathbf{w}(t')\}_{t' \geq t}$.
\label{ass:A3}
\end{assumption}
\begin{assumption}
For an attack-free sensor \mbox{$i \in \{1,2,\ldots, p\} \setminus \pmb{\kappa}$},
the adversary's knowledge at time $t$ (and hence $\mathbf{a}(t)$) is statistically independent of $\{ v_i(t') \}_{t' \geq t}$.
\label{ass:A4}
\end{assumption}
Using these assumptions, we define the
effective attack detection problem for the filtering setup as follows.
\begin{definition}[\textbf{$(\epsilon,\mathbf{s})$-Effective Attack (filtering)}]
Consider the linear dynamical system under attack
$\pmb{\Sigma_a}$
as defined in~\eqref{eq:vector_system_model},
and a $k$-adversary satisfying Assumptions 1-3 and Assumptions 6-7.
For the set of sensors $\mathbf{s}$, an $\epsilon > 0$,
and a large enough $N \in \mathbb{N}$,
an attack signal is called $(\epsilon, \mathbf{s})$-effective
at
time $t_1$
if the following bound holds:
$$tr \left(\E_{N,t_1}{\left(\e_{\mathbf{s}} \e_{\mathbf{s}}^T\right)} \right) > tr( \mathbf{F}^{\star}_{\mathbf{s}})  + \epsilon,$$
where $\e_{\mathbf{s}}(t) = \x(t) - \hat{\x}_{\mathbf{s}}(t)$
(with $\hat{\x}_{\mathbf{s}}(t)$ being the filtered state estimate).\label{def:effective_attack_filtering}
\end{definition}
Note that, compared to Definition \ref{def:effective_attack}
for the prediction setup,
we have just replaced $\mathbf{P}^{\star}_{\s}$ by $\mathbf{F}^{\star}_{\mathbf{s}}$
as shown above.
In the following subsections, we describe the effective attack detector
for the filtering setup
and prove its performance guarantees.

\subsubsection{$\epsilon$-effective attack detector for filtering setup}
The effective attack detector for the filtering setup
is described in Algorithm~\ref{alg:detector_filtering}.
\begin{algorithm}[t]
\caption{\textsc{Filtering Attack-Detect}$(\mathbf{s},t_1)$}
\begin{algorithmic}[1]
\STATE  Run a Kalman filter that uses all measurements
from sensors indexed by $\mathbf{s}$
until time $t_1 $ and compute the estimate $\hat{\x}_{\mathbf{s}} (t_1) \in \R^n$.
\STATE Recursively repeat the previous step $N-1$ times to calculate all estimates $\hat{\x}_{\mathbf{s}} (t) \in \R^n$, $ \forall t  \in G = \{t_1, t_1 + 1, \ldots, t_1 + N-1 \}$.
\STATE For time $t \in G$,
calculate the \textit{block} residue:
\begin{align*}
\mathbf{r}_{\mathbf{s}}(t) =  \bar{\mathbf{y}}_{\mathbf{s}}(t) -  \O_{\mathbf{s}} \hat{\x}_{\mathbf{s}}(t)\quad  \forall t \in G.
\end{align*}
\IF{block residue test defined below holds,
\begin{align}
& \E_{N,t_1} \left(  \res_{\mathbf{s}}  \res_{\mathbf{s}}^T \right)  - 
\left( \O_{\mathbf{s}} \mathbf{F}^\star_{\s} \O^T_{\mathbf{s}} 
+  \mathbf{M}_{\s} - \Delta_{\s} - \Delta_{\s}^T 
\right) \nonumber \\
 & \qquad \qquad \qquad \qquad \qquad \qquad \quad  \preccurlyeq   \eta \; \mathbf{1}_{n|{\mathbf{s}}|} \mathbf{1}^T_{n|{\mathbf{s}}|},\label{eq:fast_block_residue_test_filtering}
\end{align}
where $0 < \eta \leq     \left( \frac {\lambda_{\min,\mathbf{s} \setminus k}} {3n(|{\mathbf{s}}|-k)} \right) \epsilon$ ,
}
\STATE assert $\hat{d}_{\text{attack},\mathbf{s}}(t_1) := 0$
\ELSE
\STATE assert $\hat{d}_{\text{attack},\mathbf{s}}(t_1):= 1$
\ENDIF
\STATE \textbf{return} $(   \hat{d}_{\text{attack},\mathbf{s}}(t_1) , \{ \hat{\x}_{\mathbf{s}}(t) \}_{t \in G} )$
\end{algorithmic}
\label{alg:detector_filtering}
\end{algorithm}
Compared to Algorithm~\ref{alg:detector} for the prediction
setup, Algorithm~\ref{alg:detector_filtering} mainly differs in the residue test;
$\mathbf{F}^\star_{\s}$ is used in place of $\mathbf{P}^\star_{\s}$, 
and the extra terms $- \Delta_{\s} - \Delta_{\s}^T$ account for the
dependence of the estimation error at time $t$ on the sensor noise at time $t$
(details in the following subsection on performance guarantees).
Note that the expected value of
$\res_{\mathbf{s}}(t)  \res_{\mathbf{s}}^T(t)$
in the absence of attacks is exactly equal to
$\O_{\mathbf{s}} \mathbf{F}^\star_{\s} \O^T_{\mathbf{s}} 
+  \mathbf{M}_{\s} - \Delta_{\s} - \Delta_{\s}^T$;
as in the prediction version,
the residue test basically checks if
the sample average
of $\res_{\mathbf{s}}(t)  \res_{\mathbf{s}}^T(t)$
in the presence of attacks is close to its
expected value in the absence of attacks.
\subsubsection{Performance guarantees for Algorithm~\ref{alg:detector_filtering}}
The following lemma states the performance guarantees for
Algorithm~\ref{alg:detector_filtering} in the context of
detecting $\epsilon$-effective attacks in the filtering
setup.

\begin{lemma} \label{lemma:detection_guarantee_filtering}
Let the linear dynamical system as defined in~\eqref{eq:vector_system_model}
be $2{k}$-sparse observable.
Consider a $k$-adversary satisfying Assumptions $1-3$ and
Assumptions $6-7$, and a sensor subset $\mathbf{s} \subseteq\{1,2,\ldots, p\}$ with $|\mathbf{s}| \geq p - k$.
For any $\epsilon > 0$ and $\delta > 0$, there exists a large enough time window length $N$ such that when Algorithm~\ref{alg:detector_filtering} terminates with $\hat{d}_{\text{attack},\mathbf{s}}(t_1) = 0 $, the following probability bound holds:
\begin{align}
 \mathbb{P} \Big(  tr\left(\E_{t_1,N} \left(\e_{\mathbf{s}} \e^T_\mathbf{s}\right) -   \mathbf{F}^\star_{\mathbf{s}}   \right) \le  \epsilon \Big) \ge 1 - \delta,
 \label{eq:estimation_property_filtering}
\end{align}
where $\e_{\mathbf{s}}(t) = \x(t) - \hat{\x}_{\mathbf{s}}(t)$
is the (filtering) state estimation error.
\label{lemma:detector_guarantee_filtering}
\end{lemma}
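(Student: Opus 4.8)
The plan is to rerun the proof of Lemma~\ref{lemma:detection_guarantee} line for line, since passing from the prediction to the filtering setup introduces exactly one new effect: the filtering error $\e_{\mathbf{s}}(t)$ is now correlated with the block noise $\mathbf{z}_{\mathbf{s}_g}(t)$ at the \emph{current} time $t$, and the extra terms $-\Delta_{\mathbf{s}}-\Delta_{\mathbf{s}}^{T}$ in the residue test~\eqref{eq:fast_block_residue_test_filtering} are precisely what subtracts off this correlation. As in Lemma~\ref{lemma:detection_guarantee}, it suffices to establish~\eqref{eq:estimation_property_filtering} on the event that Algorithm~\ref{alg:detector_filtering} returns $\hat{d}_{\text{attack},\mathbf{s}}(t_1)=0$; the remaining assertions then follow from Definition~\ref{def:effective_attack_filtering}. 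First I would fix an attack-free subset $\mathbf{s}_g\subseteq\mathbf{s}$ with $|\mathbf{s}_g|=|\mathbf{s}|-k\ge p-2k$ (possible because at most $k$ sensors are attacked), permute~\eqref{eq:fast_block_residue_test_filtering} so that the sensors of $\mathbf{s}_g$ come first, and restrict to the corresponding $n|\mathbf{s}_g|\times n|\mathbf{s}_g|$ principal block. Since the $\mathbf{s}_g$-block of $\mathbf{M}_{\mathbf{s}}$ is $\mathbf{M}_{\mathbf{s}_g}$ and that of $\O_{\mathbf{s}}\mathbf{F}^{\star}_{\mathbf{s}}\O_{\mathbf{s}}^{T}$ is $\O_{\mathbf{s}_g}\mathbf{F}^{\star}_{\mathbf{s}}\O_{\mathbf{s}_g}^{T}$, writing $\Delta_{\mathbf{s}_g}$ for the $\mathbf{s}_g$-block of $\Delta_{\mathbf{s}}$ and taking traces yields
\begin{align}
tr\left(\E_{N,t_1}\left(\res_{\mathbf{s}_g}\res_{\mathbf{s}_g}^{T}\right)-\O_{\mathbf{s}_g}\mathbf{F}^{\star}_{\mathbf{s}}\O_{\mathbf{s}_g}^{T}-\mathbf{M}_{\mathbf{s}_g}+\Delta_{\mathbf{s}_g}+\Delta_{\mathbf{s}_g}^{T}\right)\le n(|\mathbf{s}|-k)\eta. \nonumber
\end{align}
Exactly as in~\eqref{eq:block_residue_expansion}, $\res_{\mathbf{s}_g}(t)=\O_{\mathbf{s}_g}\e_{\mathbf{s}}(t)+\mathbf{z}_{\mathbf{s}_g}(t)$, so the left-hand side splits into $tr(\O_{\mathbf{s}_g}\E_{N,t_1}(\e_{\mathbf{s}}\e_{\mathbf{s}}^{T})\O_{\mathbf{s}_g}^{T})$, the noise term $tr(\E_{N,t_1}(\mathbf{z}_{\mathbf{s}_g}\mathbf{z}_{\mathbf{s}_g}^{T}))$, and the cross term $2\,tr(\O_{\mathbf{s}_g}\E_{N,t_1}(\e_{\mathbf{s}}\mathbf{z}_{\mathbf{s}_g}^{T}))$.

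The noise term is handled verbatim by the law-of-large-numbers argument of Appendix~\ref{sec:appendix_LLN_proof}, giving $tr(\E_{N,t_1}(\mathbf{z}_{\mathbf{s}_g}\mathbf{z}_{\mathbf{s}_g}^{T}))\ge tr(\mathbf{M}_{\mathbf{s}_g})-n(|\mathbf{s}|-k)\eta$ w.h.p.\ for $N$ large. The genuinely new step is the cross term. Substituting the measurement model into the filtering update~\eqref{eq:filtering_update} gives $\e_{\mathbf{s}}(t)=(\mathbf{I}-\mathbf{L}_{\mathbf{s}}\mathbf{C}_{\mathbf{s}})\e^{(P)}_{\mathbf{s}}(t)-\mathbf{L}_{\mathbf{s}}\tilde{\mathbf{v}}_{\mathbf{s}}(t)-\mathbf{L}_{\mathbf{s}}\mathbf{a}_{\mathbf{s}}(t)$, where $\e^{(P)}_{\mathbf{s}}(t)=\x(t)-\hat{\x}^{(P)}_{\mathbf{s}}(t)$ is the one-step prediction error and $\mathbf{a}_{\mathbf{s}}(t)$ is the attack restricted to $\mathbf{s}$, and I would split the cross term according to these three summands. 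The key facts are: (i) $\e^{(P)}_{\mathbf{s}}(t)$ depends only on process noise, sensor noise, and attacks at times $\le t-1$, hence is independent of $\mathbf{z}_{\mathbf{s}_g}(t)=\J_{\mathbf{s}_g}\bar{\mathbf{w}}(t)+\bar{\mathbf{v}}_{\mathbf{s}_g}(t)$ (which depends only on $\mathbf{w}(t'),v_i(t')$ for $t'\ge t$, $i\in\mathbf{s}_g$); (ii) by the strengthened Assumptions~6--7 the attack $\mathbf{a}_{\mathbf{s}}(t)$ is independent of $\mathbf{w}(t')$ and of $v_i(t')$ ($i\in\mathbf{s}_g$) for all $t'\ge t$, in particular at $t'=t$, hence independent of $\mathbf{z}_{\mathbf{s}_g}(t)$; (iii) the only part of $\e_{\mathbf{s}}(t)$ correlated with $\mathbf{z}_{\mathbf{s}_g}(t)$ is $-\mathbf{L}_{\mathbf{s}}\tilde{\mathbf{v}}_{\mathbf{s}}(t)$, the correlation is carried solely by the good-sensor components of $\tilde{\mathbf{v}}_{\mathbf{s}}(t)$ (sensor noise being independent across sensors), and $\O_{\mathbf{s}_g}\E(\mathbf{L}_{\mathbf{s}}\tilde{\mathbf{v}}_{\mathbf{s}}(t)\mathbf{z}_{\mathbf{s}_g}^{T}(t))=\Delta_{\mathbf{s}_g}^{T}$ by the definition~\eqref{eq:define_Delta} of $\Delta_{\mathbf{s}}$.

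From (i)--(ii) the first two pieces of the cross term have zero mean; the $\e^{(P)}_{\mathbf{s}}$ piece has vanishing variance by rerunning the Chebyshev estimate of Appendix~\ref{sec:appendix_cross_term_analysis} (with $(\mathbf{I}-\mathbf{L}_{\mathbf{s}}\mathbf{C}_{\mathbf{s}})\e^{(P)}_{\mathbf{s}}$ in place of $\e_{\mathbf{s}}$, after first establishing a uniform bound on $\E(\E_{N,t_1}(tr(\e_{\mathbf{s}}\e_{\mathbf{s}}^{T})))$ exactly as in~\eqref{eq:expectation_bound_general_case_prediction_fast}), and the $\mathbf{a}_{\mathbf{s}}$ piece has vanishing variance since it is a sum over the $n$ disjoint sub-windows $G_0,\dots,G_{n-1}$ of zero-mean, within-group i.i.d.\ terms; by (iii) and the same sub-window LLN the $\tilde{\mathbf{v}}_{\mathbf{s}}$ piece concentrates around $-2\,tr(\Delta_{\mathbf{s}_g})$ w.h.p. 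Feeding these into the restricted residue test, the $\mathbf{M}_{\mathbf{s}_g}$ and $\Delta_{\mathbf{s}_g}$ contributions cancel and one is left with $tr(\O_{\mathbf{s}_g}\E_{N,t_1}(\e_{\mathbf{s}}\e_{\mathbf{s}}^{T})\O_{\mathbf{s}_g}^{T}-\O_{\mathbf{s}_g}\mathbf{F}^{\star}_{\mathbf{s}}\O_{\mathbf{s}_g}^{T})\le 3n(|\mathbf{s}|-k)\eta$ w.h.p.\ (the concentration slacks are $o(1)$ as $N\to\infty$ and are absorbed into the constant, which is why the $\eta$-threshold of Algorithm~\ref{alg:detector_filtering} coincides with that of Algorithm~\ref{alg:detector}). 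Finally the cyclic property of the trace and Lemma~\ref{lemma:eigen_bound}, together with $\lambda_{\min}(\O_{\mathbf{s}_g}^{T}\O_{\mathbf{s}_g})\ge\lambda_{\min,\mathbf{s}\setminus k}>0$ (positive because $|\mathbf{s}_g|\ge p-2k$ and the system is $2k$-sparse observable), give $tr(\E_{N,t_1}(\e_{\mathbf{s}}\e_{\mathbf{s}}^{T})-\mathbf{F}^{\star}_{\mathbf{s}})\le 3n(|\mathbf{s}|-k)\eta/\lambda_{\min,\mathbf{s}\setminus k}\le\epsilon$ w.h.p., which is~\eqref{eq:estimation_property_filtering}.

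The main obstacle is the cross-term bookkeeping of the second paragraph, and specifically item (iii): one must check that after expanding the filtered estimate via~\eqref{eq:filtering_update} the \emph{only} surviving correlation between $\e_{\mathbf{s}}(t)$ and $\mathbf{z}_{\mathbf{s}_g}(t)$ is the contemporaneous one routed through $\mathbf{L}_{\mathbf{s}}\tilde{\mathbf{v}}_{\mathbf{s}}(t)$, and that it reproduces exactly $\Delta_{\mathbf{s}_g}+\Delta_{\mathbf{s}_g}^{T}$. This is the one place where the strengthened Assumptions~6--7 are indispensable --- the adversary's information at time $t$ must be independent of noise at times $\ge t$, not merely $>t$ --- because $\mathbf{z}_{\mathbf{s}_g}(t)$ genuinely depends on $\mathbf{w}(t)$ and on $v_i(t)$ for $i\in\mathbf{s}_g$; everything else is a mechanical repetition of the prediction-case estimates.
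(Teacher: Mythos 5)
Your proposal reproduces the paper's proof of Lemma~\ref{lemma:detector_guarantee_filtering} essentially step for step: the same attack-free subset $\mathbf{s}_g$ with $|\mathbf{s}_g|\ge p-2k$, the same permutation and restriction of the residue test~\eqref{eq:fast_block_residue_test_filtering}, the same three-term expansion of $tr(\E_{N,t_1}(\res_{\mathbf{s}_g}\res_{\mathbf{s}_g}^T))$, the same LLN treatment of the noise term, the same identification of the cross-term mean with $-2\,tr(\Delta_{\mathbf{s}_g})$ via~\eqref{eq:define_Delta} (which is exactly what cancels the $-\Delta_{\mathbf{s}}-\Delta_{\mathbf{s}}^T$ correction in the test), and the same closing step through Lemma~\ref{lemma:eigen_bound} and $2k$-sparse observability. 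Your item (iii) and the remark on why Assumptions 6--7 must cover $t'=t$ match the paper's reasoning precisely. The one place you deviate is the cross-term variance analysis, and that is where there is a gap.

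You expand $\e_{\mathbf{s}}(t)=(\mathbf{I}-\mathbf{L}_{\mathbf{s}}\mathbf{C}_{\mathbf{s}})\e^{(P)}_{\mathbf{s}}(t)-\mathbf{L}_{\mathbf{s}}\tilde{\mathbf{v}}_{\mathbf{s}}(t)-\mathbf{L}_{\mathbf{s}}\mathbf{a}_{\mathbf{s}}(t)$ and treat the three pieces separately, asserting that the $\mathbf{a}_{\mathbf{s}}$ piece has vanishing variance because it is a sum of ``zero-mean, within-group i.i.d.\ terms.'' The attack is an arbitrary adversarial signal: the summands $\mathbf{a}_{\mathbf{s}}^T(t)\mathbf{L}_{\mathbf{s}}^T\O_{\mathbf{s}_g}^T\mathbf{z}_{\mathbf{s}_g}(t)$ are indeed zero-mean and uncorrelated across $t\in G_l$ (that much follows from Assumptions 6--7 and $\E(\mathbf{z}_{\mathbf{s}_g}(t'))=\mathbf{0}$), but they are not identically distributed, and --- more seriously --- the diagonal terms of the variance involve $\E\left(\Vert\mathbf{L}_{\mathbf{s}}\mathbf{a}_{\mathbf{s}}(t)\Vert^2\right)$, for which there is no a priori bound; the adversary may scale the attack with $N$, so ``$1/N_B$ times a bounded quantity'' does not follow from what you wrote. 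The paper sidesteps this by never separating the attack from the prediction error: it writes $\e_{\mathbf{s}}(t)=\tilde{\e}_{\mathbf{s}}(t)-\mathbf{L}_{\mathbf{s}}\tilde{\mathbf{v}}_{\mathbf{s}}(t)$, keeps $\tilde{\e}_{\mathbf{s}}=\e_{\mathbf{s}}+\mathbf{L}_{\mathbf{s}}\tilde{\mathbf{v}}_{\mathbf{s}}$ as a single object, and controls its time-averaged second moment through~\eqref{eq:mean_error_boundedness_filtering}, which is itself a consequence of the residue test having passed (see step (b) of~\eqref{eq:filtering_intermediate_3}). Your argument becomes correct once you regroup the prediction-error and attack contributions in exactly this way, or otherwise deduce a bound on $\frac{1}{N_B}\sum_{t\in G_l}\E\left(\Vert\mathbf{L}_{\mathbf{s}}\mathbf{a}_{\mathbf{s}}(t)\Vert^2\right)$ from the passed test; everything else in your write-up is the paper's argument.
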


\begin{proof}
The proof is similar to that
for the prediction version
(Lemma~\ref{lemma:detector_guarantee}).
The main difference lies in the cross term
analysis; it is more involved than in the
prediction version due to the dependence of
estimation error at time $t$ on the sensor noise at
time $t$. We describe the proof details below.

Since we assume that the set $\mathbf{s}$ has cardinality $\vert \mathbf{s} \vert \ge p - k$,
we can conclude that
there exists a subset $\mathbf{s}_g \subset \mathbf{s}$ with cardinality  $\vert \mathbf{s}_g \vert \ge p -2k$ sensors such that all its sensors are attack-free
(subscript $g$ in $\mathbf{s}_g$ stands for \textit{good} sensors in $\mathbf{s}$). 
Hence, by decomposing the set $\mathbf{s}$  into an attack-free set $\mathbf{s}_g$ and a potentially attacked set $\s \setminus \mathbf{s}_g$, we can conclude that, after a permutation similarity transformation for \eqref{eq:fast_block_residue_test_filtering},
the following holds for the attack-free subset $\mathbf{s}_g$:
\begin{align}
 \E_{N,t_1} \left(  \res_{\mathbf{s}_g}  \res_{\mathbf{s}_g}^T \right)  -\O_{\mathbf{s}_g}  \mathbf{F}^\star_{\mathbf{s}} \O_{\mathbf{s}_g}^T   - \M_{\mathbf{s}_g} & 
+ \Delta_{\s_g} + \Delta_{\s_g}^T  \nonumber \\
& \preccurlyeq   \eta \; \mathbf{1}_{n(|\mathbf{s}|-k)} \mathbf{1}^T_{n(|\mathbf{s}|-k)},\label{eq:fast_block_residue_test_filtering_subset}
\end{align}
where $\Delta_{\s_g} = \E \left( \mathbf{z}_{\s_g}(t) 
\tilde{\mathbf{v}}^T_{\s}(t)  \mathbf{L}^T_{\s} 
\O^T_{\mathbf{s}_g} \right) $.
Therefore,
\begin{align}
& tr \left(  \E_{N,t_1} \left(  \res_{\mathbf{s}_g}  \res_{\mathbf{s}_g}^T \right)
 -\O_{\mathbf{s}_g}  \mathbf{F}^\star_{\mathbf{s}} \O_{\mathbf{s}_g}^T  
 - \M_{\mathbf{s}_g} + \Delta_{\s_g} + \Delta_{\s_g}^T  \right) \nonumber \\
& \qquad \qquad \qquad \qquad \qquad \qquad   \leq    n(|\mathbf{s}|-k)  \eta = \epsilon_1.\label{eq:fast_block_residue_test_filtering_subset_trace}
\end{align}
As in the prediction version,
we can rewrite $tr \left (\E_{N,t_1} \left( \res_{\mathbf{s}_g} \res_{\mathbf{s}_g}^T \right) \right)$
as:
\begin{align}
& tr \left(  \E_{N,t_1} \left(  \res_{\mathbf{s}_g}  \res_{\mathbf{s}_g}^T \right) \right) 
\nonumber \\
& \qquad =
tr \left(  \O_{\mathbf{s}_g} \E_{N,t_1} \left( \mathbf{e}_{\mathbf{s}} \mathbf{e}_{\mathbf{s}}^T \right) \O_{\mathbf{s}_g}^T \right)
+  tr \left ( \E_{N,t_1} \left( \z_{\mathbf{s}_g} \z_{\mathbf{s}_g}^T \right) \right) 
\nonumber  \\ & \qquad
  + 2 \E_{N,t_1} \left(  \mathbf{e}_{\mathbf{s}}^T  \O^T_{\mathbf{s}_g}  \z_{\mathbf{s}_g} \right).
\label{eq:residue_simplification_1_filtering}
\end{align}
By combining 
\eqref{eq:fast_block_residue_test_filtering_subset_trace} and \eqref{eq:residue_simplification_1_filtering}:
\begin{align}
&  tr \left ( 
\O_{\mathbf{s}_g} \E_{N,t_1} \left( \mathbf{e}_{\mathbf{s}} \mathbf{e}_{\mathbf{s}}^T \right) \O_{\mathbf{s}_g}^T  
 - \O_{\mathbf{s}_g}  \mathbf{F}^\star_{\mathbf{s}} \O_{\mathbf{s}_g}^T \right)
 \nonumber \\
& \qquad \qquad \leq 
  tr \left ( \M_{\mathbf{s}_g}  \right)   
-  tr \left( \E_{N,t_1} \left( 
\z_{\mathbf{s}_g} \z_{\mathbf{s}_g}^T \right)  \right)
 +  \epsilon_1 
\nonumber \\ & \qquad \qquad \quad 
-  2\E_{N,t_1} \left(  \mathbf{e}_{\mathbf{s}}^T  \O^T_{\mathbf{s}_g}  \z_{\mathbf{s}_g} \right)  - 2 tr \left(  \Delta_{\s_g} \right)
\nonumber \\
& \qquad \qquad \stackrel{(a)}\leq 2 \epsilon_1  -  2\E_{N,t_1} \left(  \mathbf{e}_{\mathbf{s}}^T  \O^T_{\mathbf{s}_g}  \z_{\mathbf{s}_g} \right) 
- 2 tr \left ( \Delta_{\s_g} \right) \label{eq:LLN_z_filtering}
 \\
 & \qquad \qquad \stackrel{(b)}\leq 3 \epsilon_1 , \label{eq:3epsilon_1_bound_filtering}
\end{align}
where $(a)$ follows w.h.p. due to the law of large numbers (LLN)
for large enough $N$ (as shown in Appendix \ref{sec:appendix_LLN_proof}),
and $(b)$ follows w.h.p. by showing that the cross term $2\E_{N,t_1} \left(  \mathbf{e}^T  \O^T_{\mathbf{s}_g}  \z_{\mathbf{s}_g} \right)$
has mean equal to $- 2 tr \left ( \Delta_{\s_g} \right)$
and vanishingly small variance for large enough $N$.
The cross term analysis is described in detail in Appendix \ref{sec:appendix_cross_term_analysis_filtering}.
Using \eqref{eq:3epsilon_1_bound_filtering},
the following holds:
\begin{align}
tr \left ( 
 \E_{N,t_1} \left(   \mathbf{e}_{\mathbf{s}} \mathbf{e}_{\mathbf{s}}^T -  \mathbf{F}^\star_{\mathbf{s}} \right)
\O^T_{\mathbf{s}_g}  \O_{\mathbf{s}_g}  
\right) 
&  \leq  3 \epsilon_1, 
\end{align} 
and hence, we get the following bound which completes the proof:
\begin{align}
 tr \left ( 
 \E_{N,t_1} \left( \mathbf{e}_{\mathbf{s}} \mathbf{e}_{\mathbf{s}}^T \right)  
 -  \mathbf{F}^\star_{\mathbf{s}} \right)
& \stackrel{(c)} \leq \frac{3 \epsilon_1}{\lambda_{min} \left (  \O^T_{\mathbf{s}_g}  \O_{\mathbf{s}_g} \right) }  \stackrel{(d)} \leq \frac{3 \epsilon_1}{\lambda_{\min,\mathbf{s} \setminus k}}
 \leq \epsilon \label{eq:detection_final_bound_filtering} 
\end{align} 
where $(c)$ follows from Lemma \ref{lemma:eigen_bound} in Appendix \ref{sec:trace_ineq} and $(d)$ follows from the definition of $\lambda_{\min,\mathbf{s} \setminus k}$.
Note that, it follows from $\vert \mathbf{s}_g \vert \ge p - 2k$ and $2k$-sparse observability, that both $\lambda_{min} \left (  \O^T_{\mathbf{s}_g}  \O_{\mathbf{s}_g} \right)$ and $\lambda_{\min,\mathbf{s} \setminus k}$ are bounded away from zero.
\end{proof}
Using Lemma~\ref{lemma:detector_guarantee_filtering},
deriving results for secure state estimation in the filtering
setup is straightforward,
and we skip the details for brevity.
\subsection{Cross term analysis for filtering and proof of \eqref{eq:3epsilon_1_bound_filtering}} \label{sec:appendix_cross_term_analysis_filtering}
As in the prediction setup,
the cross term $ 2 \E_{N,t_1} \left(  \mathbf{z}^T_{\mathbf{s}_g}  \O_{\mathbf{s}_g}  \mathbf{e}_{\s} \right) $ can be written down as a sum of $n$ terms as shown below:
\begin{align}
2 \E_{N,t_1} \left(  \mathbf{z}^T_{\mathbf{s}_g} 
 \O_{\mathbf{s}_g}  \mathbf{e}_{\s} \right) 
&\stackrel{(a)} = \frac{2}{n} \sum_{l=0}^{n -1 } 
\left( \frac{1}{N_B} \sum_{t \in G_l}  \mathbf{z}^T_{\mathbf{s}_g}
(t)   \O_{\mathbf{s}_g}  \mathbf{e}_{\s}(t)   \right)  \nonumber\\
&= \frac{2}{n} \sum_{l=0}^{n - 1}  \zeta_l, \nonumber
\end{align}
where (a) follows from partitioning time window $G$ (of size $N$)
into $n$ groups $G_0, G_1, \ldots G_{n-1}$ (each of size $N_B$)
such that $G_l = \{ t |  \left( \left( t-t_1 \right) \; mod \; n \right) = l \}$.
Now, we will show that each $ \zeta_l$ has mean equal to $- tr \left( \Delta_{\s_g} \right)$
and vanishingly small variance for large enough $N$.
The mean analysis can be done as shown below:
\allowdisplaybreaks[4]{
\begin{align}
\mathbb{E} \left (  \zeta_l \right) 
 & = \mathbb{E} \left (   \frac{1}{N_B} \sum_{t \in G_l} \mathbf{z}^T_{\mathbf{s}_g}(t)   \O_{\mathbf{s}_g}  \mathbf{e}_{\s}(t)  \right)  \nonumber \\
& \stackrel{(a)}
= \mathbb{E} \left (   \frac{1}{N_B} \sum_{t \in G_l} 
\mathbf{z}^T_{\mathbf{s}_g}(t)   \O_{\mathbf{s}_g}  \left(  \tilde{\mathbf{e}}_{\s}(t)  - \mathbf{L}_{\s}   \tilde{\mathbf{v}}_{\s}(t)  \right) \right)  \nonumber \\
& = \mathbb{E} \left (   \frac{1}{N_B} \sum_{t \in G_l} 
\mathbf{z}^T_{\mathbf{s}_g}(t)   \O_{\mathbf{s}_g}    \tilde{\mathbf{e}}_{\s}(t)  \right) \nonumber \\
& \quad - \mathbb{E} \left (   \frac{1}{N_B} \sum_{t \in G_l} 
\mathbf{z}^T_{\mathbf{s}_g}(t)   \O_{\mathbf{s}_g} \mathbf{L}_{\s}   \tilde{\mathbf{v}}_{\s}(t)  \right)  \nonumber \\
 & \stackrel{(b)}=  \frac{1}{N_B} \sum_{t \in G_l}  
\mathbb{E} \left ( \mathbf{z}^T_{\mathbf{s}_g}(t)   \right) 
\mathbb{E}   \left (  \O_{\mathbf{s}_g}  \tilde{\mathbf{e}}_{\s}(t)  \right) 
\nonumber\\
& \quad - \mathbb{E} \left (   \frac{1}{N_B} \sum_{t \in G_l} 
\mathbf{z}^T_{\mathbf{s}_g}(t)   \O_{\mathbf{s}_g} \mathbf{L}_{\s}   \tilde{\mathbf{v}}_{\s}(t)  \right)  \nonumber \\
& =  - \mathbb{E} \left (   \frac{1}{N_B} \sum_{t \in G_l} 
\mathbf{z}^T_{\mathbf{s}_g}(t)   \O_{\mathbf{s}_g} \mathbf{L}_{\s}   \tilde{\mathbf{v}}_{\s}(t)  \right) \nonumber\\
& =  - \mathbb{E} \left (   \frac{1}{N_B} \sum_{t \in G_l} 
tr \left ( \mathbf{z}^T_{\mathbf{s}_g}(t)   \O_{\mathbf{s}_g} \mathbf{L}_{\s}   \tilde{\mathbf{v}}_{\s}(t)  \right) \right) \nonumber \\
& =  - \mathbb{E} \left (   \frac{1}{N_B} \sum_{t \in G_l} 
tr \left (  \left (\O_{\mathbf{s}_g} \mathbf{L}_{\s}   \tilde{\mathbf{v}}_{\s}(t)\right)^T 
\mathbf{z}_{\mathbf{s}_g}(t)  \right) \right) \nonumber \\
& =  - \mathbb{E} \left (   \frac{1}{N_B} \sum_{t \in G_l} 
tr \left (   \tilde{\mathbf{v}}^T_{\s}(t) \mathbf{L}^T_{\s} \O^T_{\mathbf{s}_g} 
\mathbf{z}_{\mathbf{s}_g}(t) \right)\right) \nonumber \\
& =  - \mathbb{E} \left (   \frac{1}{N_B} \sum_{t \in G_l} 
tr \left (  \mathbf{z}_{\mathbf{s}_g}(t)   \tilde{\mathbf{v}}^T_{\s}(t) \mathbf{L}^T_{\s} \O^T_{\mathbf{s}_g} 
\right)\right) \nonumber \\
& =  -   \frac{1}{N_B} \sum_{t \in G_l} 
tr \left ( \mathbb{E} \left (  \mathbf{z}_{\mathbf{s}_g}(t)   \tilde{\mathbf{v}}^T_{\s}(t) \mathbf{L}^T_{\s} \O^T_{\mathbf{s}_g} 
\right)\right) \nonumber \\
& =  -   \frac{1}{N_B} \sum_{t \in G_l} tr \left ( \Delta_{\s_g} \right)
\nonumber \\
& = - tr \left ( \Delta_{\s_g} \right)
 \label{eq:zero_mean_vector_filtering_fast},
\end{align}}
where (a) follows from expressing
$\mathbf{e}_{\s}(t)$ as $\tilde{\mathbf{e}}_{\s}(t)  - \mathbf{L}_{\s}   \tilde{\mathbf{v}}_{\s}(t)$
(\emph{i.e.}, separating out the sensor noise at time $t$ component in $\mathbf{e}_{\s}(t)$),
and (b) follows from the independence of $\tilde{\mathbf{e}}_{\s}(t)$ from $\mathbf{z}^T_{\mathbf{s}_g}(t) $ (follows from assumptions \ref{ass:A3} and \ref{ass:A4}).
This implies that the cross term
$ 2 \E_{N,t_1} \left(  \mathbf{z}^T_{\mathbf{s}_g}  \O_{\mathbf{s}_g}  \mathbf{e}_{\s} \right) $
has mean equal to $-2 tr \left( \Delta_{\s_g} \right)$.
Also, using \eqref{eq:zero_mean_vector_filtering_fast} and
\eqref{eq:LLN_z_filtering},
{\allowdisplaybreaks[4]\begin{align}
 2 \epsilon_1 
& \geq 
 \mathbb{E} \left(  
 \E_{N,t_1}  \left ( tr  \left(  
 \O_{\mathbf{s}_g}   \left( \mathbf{e}_{\s}  \mathbf{e}_{\s}^T  - \mathbf{F}^\star_{\mathbf{s}}  \right) 
\O^T_{\mathbf{s}_g}  
\right )
\right)  
\right) 
 \nonumber \\
& =\mathbb{E} \left(  
 \E_{N,t_1}  \left ( tr  \left(   \left( \mathbf{e}_{\s}  \mathbf{e}_{\s}^T  - \mathbf{F}^\star_{\mathbf{s}} \right) 
\O^T_{\mathbf{s}_g}  \O_{\mathbf{s}_g} \right) \right ) \right) 
\nonumber\\
& \stackrel{(a)} \geq 
\lambda_{min} \left(  \O^T_{\mathbf{s}_g} \O_{\mathbf{s}_g}  \right)
\E  \left(  \E_{N,t_1} \left(     tr   \left( \mathbf{e}_{\s}  \mathbf{e}_{\s}^T  - \mathbf{F}^\star_{\mathbf{s}} \right)  \right ) \right),
\label{eq:expectation_bound_general_case_filtering_fast_0}
\end{align}}
where (a) follows from Lemma~\ref{lemma:eigen_bound} (discussed in Appendix~\ref{sec:trace_ineq}).
Using \eqref{eq:expectation_bound_general_case_filtering_fast_0},
\begin{align}
\E \left ( 
\E_{N, t_1} 
 \left(     tr   \left(  \mathbf{e}_{\s}  \mathbf{e}_{\s}^T \right)  \right)
\right)
& \leq  \frac{2 \epsilon_1}{\lambda_{min} \left(  \O^T_{\mathbf{s}_g} \O_{\mathbf{s}_g}  \right)}  
+ tr  \left( \mathbf{F}^\star_{\mathbf{s}} \right).
\label{eq:mean_error_boundedness_filtering}
\end{align}
We will use the above intermediate result in the variance
analysis done below.

\par The variance analysis for $\zeta_l$ can be done as shown below:
\allowdisplaybreaks[4]{
\begin{align}
& \E \left(  \left(  
 \frac{1}{N_B} \sum_{t \in G_l} \mathbf{z}^T_{\mathbf{s}_g}(t)   \O_{\mathbf{s}_g}  \mathbf{e}_{\s}(t) 
 \right)^2 \right) \nonumber\\
& = \E \left(  \left(  
 \frac{1}{N_B} \sum_{t \in G_l} \mathbf{e}^T_{\s}(t) \O^T_{\mathbf{s}_g}    \mathbf{z}_{\mathbf{s}_g}(t) 
 \right)^2 \right) \nonumber\\
& = \E \left(   
 \frac{1}{N^2_B} \sum_{t \in G_l} \left( \mathbf{e}^T_{\s}(t) \O^T_{\mathbf{s}_g}    \mathbf{z}_{\mathbf{s}_g}(t) \right)^2
 \right) \nonumber\\
& \quad + \E \left( \frac{2}{N^2_B} \sum_{t, t' \in G_l, t < t'} 
\mathbf{e}^T_{\s}(t) \O^T_{\mathbf{s}_g} \mathbf{z}_{\mathbf{s}_g}(t) 
\mathbf{e}^T_{\s}(t') \O^T_{\mathbf{s}_g} \mathbf{z}_{\mathbf{s}_g}(t')
\right) \nonumber\\
& = \E \left(   
 \frac{1}{N^2_B} \sum_{t \in G_l} \left( \mathbf{e}^T_{\s}(t) \O^T_{\mathbf{s}_g}    \mathbf{z}_{\mathbf{s}_g}(t) \right)^2
 \right) \nonumber\\
& \quad + \E \left( \frac{2}{N^2_B} \sum_{t, t' \in G_l, t < t'} 
\mathbf{e}^T_{\s}(t) \O^T_{\mathbf{s}_g} \mathbf{z}_{\mathbf{s}_g}(t) 
\tilde{\mathbf{e}}^T_{\s}(t') \O^T_{\mathbf{s}_g} \mathbf{z}_{\mathbf{s}_g}(t')
\right) \nonumber\\
& \quad - \E \left( \frac{2}{N^2_B} \sum_{t, t' \in G_l, t < t'} 
\mathbf{e}^T_{\s}(t) \O^T_{\mathbf{s}_g} \mathbf{z}_{\mathbf{s}_g}(t) 
\tilde{\mathbf{v}}^T_{\s}(t') \mathbf{L}^T_{\s}
 \O^T_{\mathbf{s}_g} \mathbf{z}_{\mathbf{s}_g}(t')
\right) \nonumber\\
& \stackrel{(a)}= \E \left(   
 \frac{1}{N^2_B} \sum_{t \in G_l} 
\left( \mathbf{e}^T_{\s}(t) \O^T_{\mathbf{s}_g}    \mathbf{z}_{\mathbf{s}_g}(t) \right)^2 \right) \nonumber\\
& \quad + \frac{2}{N^2_B} \sum_{t, t' \in G_l, t < t'} 
 \E \left(  \mathbf{e}^T_{\s}(t) \O^T_{\mathbf{s}_g} \mathbf{z}_{\mathbf{s}_g}(t) 
\tilde{\mathbf{e}}^T_{\s}(t') \O^T_{\mathbf{s}_g} \right)
\E \left(
\mathbf{z}_{\mathbf{s}_g}(t')
\right) \nonumber\\
& \quad - \E \left( \frac{2}{N^2_B} \sum_{t, t' \in G_l, t < t'} 
\mathbf{e}^T_{\s}(t) \O^T_{\mathbf{s}_g} \mathbf{z}_{\mathbf{s}_g}(t) 
\tilde{\mathbf{v}}^T_{\s}(t') \mathbf{L}^T_{\s}
 \O^T_{\mathbf{s}_g} \mathbf{z}_{\mathbf{s}_g}(t')
\right) \nonumber\\
& = \E \left(   
 \frac{1}{N^2_B} \sum_{t \in G_l} 
\left( \mathbf{e}^T_{\s}(t) \O^T_{\mathbf{s}_g}    \mathbf{z}_{\mathbf{s}_g}(t) \right)^2 \right)  + \mathbf{0} \nonumber\\
& \;\; \;  - \E \left( \frac{2}{N^2_B} \sum_{t, t' \in G_l, t < t'} 
\mathbf{e}^T_{\s}(t) \O^T_{\mathbf{s}_g} \mathbf{z}_{\mathbf{s}_g}(t) 
\tilde{\mathbf{v}}^T_{\s}(t') \mathbf{L}^T_{\s}
 \O^T_{\mathbf{s}_g} \mathbf{z}_{\mathbf{s}_g}(t')
\right) \nonumber\\
& = \E \left(   
 \frac{1}{N^2_B} \sum_{t \in G_l} 
\left( \mathbf{e}^T_{\s}(t) \O^T_{\mathbf{s}_g}    \mathbf{z}_{\mathbf{s}_g}(t) \right)^2 \right) \nonumber\\
& \;\;  -  
\frac{2}{N^2_B} \sum_{t, t' \in G_l, t < t'} 
\Big ( 
\E \left( \mathbf{e}^T_{\s}(t) \O^T_{\mathbf{s}_g} \mathbf{z}_{\mathbf{s}_g}(t) \right) 
  \nonumber \\ & \quad  \quad \quad  \quad \quad \quad  \quad \quad \quad \quad \quad \quad 
\times  \E \left(
\tilde{\mathbf{v}}^T_{\s}(t') \mathbf{L}^T_{\s}
 \O^T_{\mathbf{s}_g} \mathbf{z}_{\mathbf{s}_g}(t')
\right)
\Big)
  \nonumber\\
& \stackrel{(b)} = 
\E \left(   
 \frac{1}{N^2_B} \sum_{t \in G_l} 
\left( \mathbf{e}^T_{\s}(t) \O^T_{\mathbf{s}_g}    \mathbf{z}_{\mathbf{s}_g}(t) \right)^2 \right) \nonumber\\
& \quad  +  \frac{2}{N^2_B} \sum_{t, t' \in G_l, t < t'} 
\left( tr \left( \Delta_{\s_g} \right) \right)^2  
 \nonumber\\
& = \E \left(   
 \frac{1}{N^2_B} \sum_{t \in G_l}
\left( \mathbf{e}^T_{\s}(t) \O^T_{\mathbf{s}_g}    \mathbf{z}_{\mathbf{s}_g}(t) \right)^2 \right) \nonumber\\
& \quad  +  \frac{N_B \left( N_B - 1 \right)}{N^2_B} 
\left( tr \left( \Delta_{\s_g} \right) \right)^2  ,
 \label{eq:filtering_intermediate_1}
\end{align}
}
where (a) follows from independence of $\mathbf{z}_{\mathbf{s}_g}(t')$ from
$\displaystyle{\mathbf{e}^T_{\s}(t) \O^T_{\mathbf{s}_g} \mathbf{z}_{\mathbf{s}_g}(t) 
\tilde{\mathbf{e}}^T_{\s}(t') \O^T_{\mathbf{s}_g}}$
for $t < t'$,
and
(b) follows from $\displaystyle{\E \left(
\tilde{\mathbf{v}}^T_{\s}(t') \mathbf{L}^T_{\s}
 \O^T_{\mathbf{s}_g} \mathbf{z}_{\mathbf{s}_g}(t')
\right) = tr \left( \Delta_{\s_g} \right)}$ and
$\displaystyle{\E \left( \mathbf{e}^T_{\s}(t) \O^T_{\mathbf{s}_g} \mathbf{z}_{\mathbf{s}_g}(t) \right)= - tr \left( \Delta_{\s_g} \right)}$.
Now, we focus on analyzing the first term in \eqref{eq:filtering_intermediate_1} as shown below.
For any $\epsilon_2 > 0$, there exists a large enough $N_B$ such that:
\begin{align}
&\E \left(   
 \frac{1}{N^2_B} \sum_{t \in G_l}  \left( \mathbf{e}^T_{\s}(t) \O^T_{\mathbf{s}_g}    \mathbf{z}_{\mathbf{s}_g}(t) \right)^2
 \right) \nonumber\\
&=\E \left(   
 \frac{1}{N^2_B} \sum_{t \in G_l}  \left(  \left (
\tilde{\mathbf{e}}_{\s}(t) - \mathbf{L}_{\s} \tilde{\mathbf{v}}_{\s}(t)
\right)^T
 \O^T_{\mathbf{s}_g}    \mathbf{z}_{\mathbf{s}_g}(t) \right)^2
 \right) \nonumber\\
&=\E \left(   
 \frac{1}{N^2_B} \sum_{t \in G_l}  \left(  \left (
\tilde{\mathbf{e}}^T_{\s}(t) - \tilde{\mathbf{v}}^T_{\s}(t) \mathbf{L}^T_{\s} 
\right)
 \O^T_{\mathbf{s}_g}    \mathbf{z}_{\mathbf{s}_g}(t) \right)^2
 \right) \nonumber\\
&=\E \left(   
 \frac{1}{N^2_B} \sum_{t \in G_l} 
 \left(
\tilde{\mathbf{e}}^T_{\s}(t)\O^T_{\mathbf{s}_g}    \mathbf{z}_{\mathbf{s}_g}(t)
 - \tilde{\mathbf{v}}^T_{\s}(t) \mathbf{L}^T_{\s}\O^T_{\mathbf{s}_g}    \mathbf{z}_{\mathbf{s}_g}(t) 
  \right)^2
 \right) \nonumber\\
&=\E \left(   
 \frac{1}{N^2_B} \sum_{t \in G_l} 
 \left(
\tilde{\mathbf{e}}^T_{\s}(t)\O^T_{\mathbf{s}_g}    \mathbf{z}_{\mathbf{s}_g}(t)
\right)^2 \right) \nonumber\\
& \; \; \; - 
\E \left(
\frac{2}{N^2_B} \sum_{t \in G_l} 
\tilde{\mathbf{e}}^T_{\s}(t)\O^T_{\mathbf{s}_g}    \mathbf{z}_{\mathbf{s}_g}(t)
\tilde{\mathbf{v}}^T_{\s}(t) \mathbf{L}^T_{\s}\O^T_{\mathbf{s}_g}    \mathbf{z}_{\mathbf{s}_g}(t) 
 \right) \nonumber \\
& \; \; \; + 
\E \left(
\frac{1}{N^2_B} \sum_{t \in G_l} 
\left(
\tilde{\mathbf{v}}^T_{\s}(t) \mathbf{L}^T_{\s}\O^T_{\mathbf{s}_g}    \mathbf{z}_{\mathbf{s}_g}(t) 
  \right)^2
 \right) \nonumber\\
&=\E \left(   
 \frac{1}{N^2_B} \sum_{t \in G_l} 
 \left(
\tilde{\mathbf{e}}^T_{\s}(t)\O^T_{\mathbf{s}_g}    \mathbf{z}_{\mathbf{s}_g}(t)
\right)^2 \right) \nonumber\\
& \; \; \; - 
\E \left(
\frac{2}{N^2_B} \sum_{t \in G_l} 
\tilde{\mathbf{e}}^T_{\s}(t)\O^T_{\mathbf{s}_g}    \mathbf{z}_{\mathbf{s}_g}(t)
\tilde{\mathbf{v}}^T_{\s}(t) \mathbf{L}^T_{\s}\O^T_{\mathbf{s}_g}    \mathbf{z}_{\mathbf{s}_g}(t) 
 \right) \nonumber \\
& \; \; \; + 
\frac{1}{N_B} 
\E \left( \left(
\tilde{\mathbf{v}}^T_{\s}(t) \mathbf{L}^T_{\s}\O^T_{\mathbf{s}_g}    \mathbf{z}_{\mathbf{s}_g}(t) 
  \right)^2
 \right) \nonumber\\
&=\E \left(   
 \frac{1}{N^2_B} \sum_{t \in G_l} 
 \left(
\tilde{\mathbf{e}}^T_{\s}(t)\O^T_{\mathbf{s}_g}    \mathbf{z}_{\mathbf{s}_g}(t)
\right)^2 \right) \nonumber\\
& \; \; \; - 2
\frac{ \left ( 
 \sum_{t \in G_l} 
\frac{\E \left ( \tilde{\mathbf{e}}^T_{\s}(t) \right)}{N_B}
\right)}{N_B}
\E \left( 
\O^T_{\mathbf{s}_g}    \mathbf{z}_{\mathbf{s}_g}(t)
\tilde{\mathbf{v}}^T_{\s}(t) \mathbf{L}^T_{\s}\O^T_{\mathbf{s}_g}    \mathbf{z}_{\mathbf{s}_g}(t) 
 \right) \nonumber \\
& \; \; \; + 
\frac{1}{N_B} 
\E \left( \left(
\tilde{\mathbf{v}}^T_{\s}(t) \mathbf{L}^T_{\s}\O^T_{\mathbf{s}_g}    \mathbf{z}_{\mathbf{s}_g}(t) 
  \right)^2
 \right) \nonumber\\
&=\E \left(   
 \frac{1}{N^2_B} \sum_{t \in G_l} 
\tilde{\mathbf{e}}^T_{\s}(t)\O^T_{\mathbf{s}_g}    \mathbf{z}_{\mathbf{s}_g}(t)
\tilde{\mathbf{e}}^T_{\s}(t)\O^T_{\mathbf{s}_g}    \mathbf{z}_{\mathbf{s}_g}(t) 
\right) \nonumber\\
& \; \; \; - 2
\frac{ \left ( 
 \sum_{t \in G_l} 
\frac{\E \left ( \tilde{\mathbf{e}}^T_{\s}(t) \right)}{N_B}
\right)}{N_B}
\E \left( 
\O^T_{\mathbf{s}_g}    \mathbf{z}_{\mathbf{s}_g}(t)
\tilde{\mathbf{v}}^T_{\s}(t) \mathbf{L}^T_{\s}\O^T_{\mathbf{s}_g}    \mathbf{z}_{\mathbf{s}_g}(t) 
 \right) \nonumber \\
& \; \; \; + 
\frac{1}{N_B} 
\E \left( \left(
\tilde{\mathbf{v}}^T_{\s}(t) \mathbf{L}^T_{\s}\O^T_{\mathbf{s}_g}    \mathbf{z}_{\mathbf{s}_g}(t) 
  \right)^2
 \right) \nonumber\\
&=\E \left(   
 \frac{1}{N^2_B} \sum_{t \in G_l} 
\tilde{\mathbf{e}}^T_{\s}(t)\O^T_{\mathbf{s}_g} \mathbf{z}_{\mathbf{s}_g}(t)
\mathbf{z}^T_{\mathbf{s}_g}(t) \O_{\mathbf{s}_g} \tilde{\mathbf{e}}_{\s}(t)  
\right) \nonumber\\
& \; \; \; - 2
\frac{ \left ( 
 \sum_{t \in G_l} 
\frac{\E \left ( \tilde{\mathbf{e}}^T_{\s}(t) \right)}{N_B}
\right)}{N_B}
\E \left( 
\O^T_{\mathbf{s}_g}    \mathbf{z}_{\mathbf{s}_g}(t)
\tilde{\mathbf{v}}^T_{\s}(t) \mathbf{L}^T_{\s}\O^T_{\mathbf{s}_g}    \mathbf{z}_{\mathbf{s}_g}(t) 
 \right) \nonumber \\
& \; \; \; + 
\frac{1}{N_B} 
\E \left( \left(
\tilde{\mathbf{v}}^T_{\s}(t) \mathbf{L}^T_{\s}\O^T_{\mathbf{s}_g}    \mathbf{z}_{\mathbf{s}_g}(t) 
  \right)^2
 \right) \nonumber\\
&=\E \left(   
 \frac{1}{N^2_B} \sum_{t \in G_l} 
tr \left( \tilde{\mathbf{e}}^T_{\s}(t)\O^T_{\mathbf{s}_g} \mathbf{z}_{\mathbf{s}_g}(t)
\mathbf{z}^T_{\mathbf{s}_g}(t) \O_{\mathbf{s}_g} \tilde{\mathbf{e}}_{\s}(t)  
\right) \right) \nonumber\\
& \; \; \; - 2
\frac{ \left ( 
 \sum_{t \in G_l} 
\frac{\E \left ( \tilde{\mathbf{e}}^T_{\s}(t) \right)}{N_B}
\right)}{N_B}
\E \left( 
\O^T_{\mathbf{s}_g}    \mathbf{z}_{\mathbf{s}_g}(t)
\tilde{\mathbf{v}}^T_{\s}(t) \mathbf{L}^T_{\s}\O^T_{\mathbf{s}_g}    \mathbf{z}_{\mathbf{s}_g}(t) 
 \right) \nonumber \\
& \; \; \; + 
\frac{1}{N_B} 
\E \left( \left(
\tilde{\mathbf{v}}^T_{\s}(t) \mathbf{L}^T_{\s}\O^T_{\mathbf{s}_g}    \mathbf{z}_{\mathbf{s}_g}(t) 
  \right)^2
 \right) \nonumber\\
&=
\E 
\left(   
 \frac{1}{N^2_B} \sum_{t \in G_l} 
tr 
\left( 
\tilde{\mathbf{e}}_{\s}(t) \tilde{\mathbf{e}}^T_{\s}(t)
\O^T_{\mathbf{s}_g} 
\mathbf{z}_{\mathbf{s}_g}(t) \mathbf{z}^T_{\mathbf{s}_g}(t) 
\O_{\mathbf{s}_g} 
\right )
\right )
 \nonumber\\
& \; \; \; - 2
\frac{ \left ( 
 \sum_{t \in G_l} 
\frac{\E \left ( \tilde{\mathbf{e}}^T_{\s}(t) \right)}{N_B}
\right)}{N_B}
\E \left( 
\O^T_{\mathbf{s}_g}    \mathbf{z}_{\mathbf{s}_g}(t)
\tilde{\mathbf{v}}^T_{\s}(t) \mathbf{L}^T_{\s}\O^T_{\mathbf{s}_g}    \mathbf{z}_{\mathbf{s}_g}(t) 
 \right) \nonumber \\
& \; \; \; + 
\frac{1}{N_B} 
\E \left( \left(
\tilde{\mathbf{v}}^T_{\s}(t) \mathbf{L}^T_{\s}\O^T_{\mathbf{s}_g}    \mathbf{z}_{\mathbf{s}_g}(t) 
  \right)^2
 \right) \nonumber\\
&= \frac{1}{N^2_B} \sum_{t \in G_l} 
tr \left( 
\E \left( 
\tilde{\mathbf{e}}_{\s}(t) \tilde{\mathbf{e}}^T_{\s}(t) 
\right)
\E \left(
\O^T_{\mathbf{s}_g} 
\mathbf{z}_{\mathbf{s}_g}(t) \mathbf{z}^T_{\mathbf{s}_g}(t) 
\O_{\mathbf{s}_g} 
\right )
\right )
 \nonumber\\
& \; \; \; - 2
\frac{ \left ( 
 \sum_{t \in G_l} 
\frac{\E \left ( \tilde{\mathbf{e}}^T_{\s}(t) \right)}{N_B}
\right)}{N_B}
\E \left( 
\O^T_{\mathbf{s}_g}    \mathbf{z}_{\mathbf{s}_g}(t)
\tilde{\mathbf{v}}^T_{\s}(t) \mathbf{L}^T_{\s}\O^T_{\mathbf{s}_g}    \mathbf{z}_{\mathbf{s}_g}(t) 
 \right) \nonumber \\
& \; \; \; + 
\frac{1}{N_B} 
\E \left( \left(
\tilde{\mathbf{v}}^T_{\s}(t) \mathbf{L}^T_{\s}\O^T_{\mathbf{s}_g}    \mathbf{z}_{\mathbf{s}_g}(t) 
  \right)^2
 \right) \nonumber\\
&= \frac{1}{N^2_B} \sum_{t \in G_l} 
tr \left( 
\E \left( 
\tilde{\mathbf{e}}_{\s}(t) \tilde{\mathbf{e}}^T_{\s}(t) 
\right)
 \left(
\O^T_{\mathbf{s}_g} 
\mathbf{M}_{\mathbf{s}_g} 
\O_{\mathbf{s}_g} 
\right )
\right )
 \nonumber\\
& \; \; \; - 2
\frac{ \left ( 
 \sum_{t \in G_l} 
\frac{\E \left ( \tilde{\mathbf{e}}^T_{\s}(t) \right)}{N_B}
\right)}{N_B}
\E \left( 
\O^T_{\mathbf{s}_g}    \mathbf{z}_{\mathbf{s}_g}(t)
\tilde{\mathbf{v}}^T_{\s}(t) \mathbf{L}^T_{\s}\O^T_{\mathbf{s}_g}    \mathbf{z}_{\mathbf{s}_g}(t) 
 \right) \nonumber \\
& \; \; \; + 
\frac{1}{N_B} 
\E \left( \left(
\tilde{\mathbf{v}}^T_{\s}(t) \mathbf{L}^T_{\s}\O^T_{\mathbf{s}_g}    \mathbf{z}_{\mathbf{s}_g}(t) 
  \right)^2
 \right) \nonumber\\
& \stackrel{(a)}\leq  
\frac{\lambda_{max} \left(
\O^T_{\mathbf{s}_g} 
\mathbf{M}_{\mathbf{s}_g} 
\O_{\mathbf{s}_g} 
\right )}{N^2_B} \sum_{t \in G_l} 
tr \left( 
\E \left( 
\tilde{\mathbf{e}}_{\s}(t) \tilde{\mathbf{e}}^T_{\s}(t) 
\right)
\right )
 \nonumber\\
& \; \; \; - 2
\frac{ \left ( 
 \sum_{t \in G_l} 
\frac{\E \left ( \tilde{\mathbf{e}}^T_{\s}(t) \right)}{N_B}
\right)}{N_B}
\E \left( 
\O^T_{\mathbf{s}_g}    \mathbf{z}_{\mathbf{s}_g}(t)
\tilde{\mathbf{v}}^T_{\s}(t) \mathbf{L}^T_{\s}\O^T_{\mathbf{s}_g}    \mathbf{z}_{\mathbf{s}_g}(t) 
 \right) \nonumber \\
& \; \; \; + 
\frac{1}{N_B} 
\E \left( \left(
\tilde{\mathbf{v}}^T_{\s}(t) \mathbf{L}^T_{\s}\O^T_{\mathbf{s}_g}    \mathbf{z}_{\mathbf{s}_g}(t) 
  \right)^2
 \right) \nonumber\\
&\stackrel{(b)}
\leq \epsilon_2, \label{eq:filtering_intermediate_3}
\end{align}
where (a) follows from
Lemma~\ref{lemma:eigen_bound} (discussed in Appendix~\ref{sec:trace_ineq}),
and (b) follows
for large enough $N_B$
from the boundedness of
$ \E \left ( 
\E_{N, t_1} 
 \left(     tr   \left(  \mathbf{e}_{\s}  \mathbf{e}_{\s}^T \right)  \right)
\right)$
as shown in \eqref{eq:mean_error_boundedness_filtering}.

\par Using \eqref{eq:filtering_intermediate_3} and
\eqref{eq:filtering_intermediate_1}, for any $\epsilon_3 > 0$, there
exists a large enough $N_B$ such that:
\begin{align}
Var(\zeta_l) & = \E(\zeta^2_l) - \left ( \E(\zeta_l) \right)^2 \nonumber\\
& \stackrel{(a)}\leq \epsilon_3 + \left( tr\left ( \Delta_{\s_g} \right) \right)^2
- \left( tr\left ( \Delta_{\s_g} \right) \right)^2  
& = \epsilon_3,
\end{align}
where (a) follows from \eqref{eq:filtering_intermediate_3} and
\eqref{eq:filtering_intermediate_1}.
This completes the variance analysis of $\zeta_l$, and clearly $\zeta_l$ 
has vanishingly small variance as $N_B \rightarrow \infty$.
As a consequence, the variance of
the cross term
$\displaystyle{ 2 \E_{N,t_1} \left(  \mathbf{z}^T_{\mathbf{s}_g}  \O_{\mathbf{s}_g}  \mathbf{e}_{\s} \right) 
= \frac{2}{n} \sum_{l=0}^{n - 1}  \zeta_l}$
is also vanishingly small for $N_B \rightarrow \infty$
(follows from the Cauchy-Schwarz inequality).
This completes the proof of \eqref{eq:3epsilon_1_bound_filtering}.

\end{document}